\newtheorem{theorem}{Theorem}[section]
\newtheorem{lemma}[theorem]{Lemma}
\newtheorem{corollary}[theorem]{Corollary}
\newtheorem{proposition}[theorem]{Proposition}
\newtheorem{remark}[theorem]{Remark}
\newtheorem{definition}[theorem]{Definition}
\begin{document}

\title{Strong measurable continuous modifications of stochastic flows}

\author{Olivier Raimond}
\address{MODAL'X, Université Paris Nanterre, 200 Avenue de la
  République, 92000 Nanterre, France}
\email{oraimond@parisnanterre.fr}
\author{Georgii Riabov}
\address{Institute of Mathematics of NAS of Ukraine}
\email{ryabov.george@gmail.com}

\keywords{stochastic flows, coalescing flows, metric graphs}

\subjclass[2010]{60J46}

\maketitle

\begin{abstract}
 This paper is devoted to stochastic flows of
measurable mappings in a locally compact separable metric space
$(M,\rho).$ We propose a new construction that produces strong measurable continuous modifications for certain stochastic flows of measurable mappings in metric graphs.
\end{abstract}

\tableofcontents{}

\section{Introduction}

This paper is devoted to the construction of stochastic flows of
measurable mappings in a locally compact separable metric space
$(M,\rho).$ We take the definition of stochastic flows of measurable
mappings from \cite{zbMATH02100692} (see the correction
\cite{zbMATH07226371} and the complete corrected version that can be
downloaded at
\href{https://arxiv.org/pdf/math/0203221.pdf}{arXiv:math/0203221v6};
in our references to \cite{zbMATH07226371} we use the numeration of
Theorems from
\href{https://arxiv.org/pdf/math/0203221.pdf}{arXiv:math/0203221v6}). To
describe the problem we briefly review the characterization of
stochastic flows of measurable mappings obtained in
\cite{zbMATH07226371}.  By $\mathcal{B}(M)$ we denote the Borel
$\sigma$-field in the space $M$. For every $n\in \mathbb{N},$ let
$(\mathsf{P}^{(n)}_t, t\geq 0)$ be a Feller transition function on
$M^n.$ We will say that $(\mathsf{P}^{(n)}_{\bullet}:n\in \mathbb{N})$ is a
\textit{consistent sequence of ``coalescing'' Feller transition
  functions on $M$}, if

\begin{enumerate}[(TF~1)]
\item\label{item:consistency} For any
  $\{i_1,\ldots,i_k\}\subset \{1,\ldots,n\},$ $t\geq 0,$ $x\in M^n$
  and $B\in \mathcal{B}(M^k)$
  $$
  \mathsf{P}^{(n)}_t(x,\pi^{-1}_{i_1,\ldots,i_k}B)=\mathsf{P}^{(k)}_t(\pi_{i_1,\ldots,i_k}x,B),
  $$
  where $\pi_{i_1.\ldots,i_k}:M^n\to M^k$ is defined by
  $\pi_{i_1,\ldots,i_k}x=(x_{i_1},\ldots,x_{i_k});$
  
\item\label{item:coalescing_property} For any $x\in M$ and $t\geq 0$
  $$
  \mathsf{P}^{(2)}_t((x,x),\Delta)=1,
  $$
  where $\Delta= \{(x,x)\,:\,x\in{M}\}$ is the diagonal in $M^2$.
\end{enumerate}

The results of \cite{zbMATH07226371} imply that to any consistent
sequence $(\mathsf{P}^{(n)}_{\bullet}:n\in \mathbb{N})$ of coalescing Feller
transition functions on $M$ one can associate a \textit{stochastic
  flow of measurable mappings
  $\psi=(\psi_{s,t}: -\infty<s\leq t<\infty)$ in $M$}
(see \cite[Def. 1.3.1]{zbMATH07226371}). Let $F$ be the space of all
measurable mappings $f:M\to M$ equipped with the cylindrical
$\sigma$-field. Denote by $C_0(M^n)$ the space of continuous functions
$f:M^n\to \mathbb{R}$ that vanish at infinity. A stochastic flow of
measurable mappings on $M$ is a family
$\psi=(\psi_{s,t}:-\infty<s\leq t<\infty)$ of random elements in $F$
defined on a probability space \((\Omega,\mathcal{A},\mathbb{P})\),
such that

\begin{enumerate}[(SF~1)]
\item\label{item:regularity}
  For any $s\leq t$, $n\in \mathbb{N}$, $x\in M^n$ and
  $f\in C_0(M^n),$
  $$
  \mathbb{E}f(\psi_{s,t}(x_1),\ldots,\psi_{s,t}(x_n))=\mathsf{P}^{(n)}_{t-s}f(x);
  $$

  %

\item \label{item:weak_flow} There exists a family
  $(\mathcal{J}_{t}: t\geq 0)$ of measurable mappings
  $\mathcal{J}_{t}:F\times M\to M,$ such that for all $s\leq t\leq u$
  and $x\in M,$ $\mathcal{J}_{t-s}(\psi_{s,t})(x)=\psi_{s,t}(x)$ a.s., 
  \begin{align*}
    \psi_{s,u}(x)=\mathcal{J}_{u-t}\left(\psi_{t,u}\right)\circ\psi_{s,t}(x) \mbox{ a.s.},
  \end{align*}
  and $\psi_{s,s}(x)=x$; 

\item \label{item:independence} For any
  $t_1\leq t_2\leq \ldots \leq t_n$, the family
  $(\psi_{t_i,t_{i+1}}:1\leq i\leq n-1)$ is a family of independent
  random mappings;
  

\item \label{item:sf_time_continuity} For any $f\in C_0(M)$ and $s\leq t$,
  $$
  \lim_{(u,v)\to
    (s,t)}\sup_{x\in{}M}\mathbb{E}\left[\big(f(\psi_{u,v}(x))-f(\psi_{s,t}(x))\big)^2\right]=0;
  $$
\item \label{item:sf_spatial_continuity} For any $f\in C_0(M), x\in M$
  and $s\leq t$,
  $$
  \lim_{y\to{}x}\mathbb{E}\left[\big(f(\psi_{s,t}(y))-f(\psi_{s,t}(x))\big)^2\right]=0
  \quad\mbox{ and }\quad
  \lim_{y\to\infty}\mathbb{E}\left[f(\psi_{s,t}(y))^2\right]=0.
  $$
\end{enumerate}

Let \(\psi\) be a stochastic flow of measurable mappings in \(M\) and
let \(\psi'\) be a family of random elements in \((F,\mathcal{F})\),
such that $\psi'_{s,s}(\omega,x)=x$ for all
$(s,x,\omega)\in \mathbb{R}\times M\times \Omega$, and
$\mathbb{P}\left[\psi'_{s,t}(x)=\psi_{s,t}(x)\right]=1$ for all
$s\leq t$ and $x\in M$, then $\psi'$ is also a stochastic flow of
measurable mappings in \(M\) (see \cite[Remark
1.3.2]{zbMATH07226371}). If \(\psi'\) is such that the mapping
$(s,t,x,\omega)\mapsto \psi'_{s,t}(\omega,x)$ is measurable, then the flow
$\psi'$ will be called a \textit{measurable} modification of $\psi$.

Note that for all \((s,x)\), the process $\psi_{s,\cdot}(x)$ is
Feller, and has a c\`adl\`ag modification. However, it is not obvious
that there exists a measurable modification $\psi'$ of $\psi$ with
c\`adl\`ag trajectories (i.e. such that \(\psi'_{s,\cdot}(x)\) is c\`adl\`ag
for all $(s,x,\omega)\in \mathbb{R}\times M \times \Omega$). We are
interested in the existence of such measurable modifications. It is
also natural to address the question of the existence of such
modification to satisfy the following \textit{strong flow property}
that improves the flow property (SF~\ref{item:weak_flow}):
\begin{enumerate}[(SF~1)]\setcounter{enumi}{5}
\item  \label{item:sfp}  For all $s\leq t\leq u$ and $\omega\in
  \Omega$ (to simplify the notation, we have omitted the dependency
  on \(\omega\)),
  $$
  \psi_{s,u} = \psi_{t,u}\circ\psi_{s,t}.
  $$
\end{enumerate}
In this paper we only work with flows with continuous trajectories
$t\mapsto \psi_{s,t}(x)$. In particular, we will always assume that
the transition function $(\mathsf{P}^{(1)}_t: t\geq 0)$ satisfies the
condition
\begin{enumerate}[(TF~1)]
  \setcounter{enumi}{2}
\item \label{item:continuity_of_trajectories_TF3} For any compact
  $K\subset{}M$ and $r>0$,\quad
  \[
    \lim_{t\downarrow{}0}t^{-1}\sup_{x\in{K}}\mathsf{P}^{(1)}_t\left(x,\{y:
      \rho(x,y)>r\}\right)=0.
  \]
\end{enumerate}

A measurable modification $\psi'$ of a stochastic flow of measurable
mappings $\psi$ on $M$ will be called a \textit{strong measurable
  continuous modification of \(\psi\)} if $\psi'$ has continuous
trajectories (i.e. \(\psi'_{s,\cdot}(x)\) is continuous for all
$(s,x,\omega)\in \mathbb{R}\times M \times \Omega$) and if \(\psi'\)
satisfies the strong flow property (SF~\ref{item:sfp}). A new method
to construct such modification is given in this paper. This method
requires an additional ``compactness'' condition (TF~\ref{item:14}) on
the sequence of \(n\)-point transition functions
$(\mathsf{P}^{(n)}_{\bullet}:n\in \mathbb{N})$ of \(\psi\). This condition will
be given in Section \ref{sec:2.3.3}.


The existence of strong measurable continuous modifications of
stochastic flows of measurable mappings is known in several cases. For
a stochastic flow of solutions to a stochastic differential equation
(SDE) with smooth coefficients in a finite-dimensional smooth manifold
$M$, the existence of a strong measurable continuous modifications was
proved in \cite{zbMATH01001278}. Strong measurable continuous
modifications of certain instantaneously coalescing stochastic flows
of measurable mappings in the real line were constructed in
\cite{zbMATH06929104}.  In \cite{zbMATH06357661} it was proved that
stochastic flows of kernels associated to the Brownian web possess
strong measurable continuous modifications. We note that in
\cite{zbMATH04024451} a strong stochastic flow associated to a
consistent sequences of coalescing transition functions on $M,$ even
without the Feller property, was constructed. However, this flow is
not measurable in either of the variables $s,t,x.$ We apply our
results to some stochastic flows in metric graphs. Stochastic flows of
solutions to SDE's on metric graphs were studied in
\cite{zbMATH06049114}, \cite{zbMATH06377556}, \cite{zbMATH06257629},
\cite{zbMATH06518035}.

The approach we propose is based on the analysis of families of
deterministic continuous mappings
$\theta_{s,\cdot}(x):[s,\infty)\to M,$ $\theta_{s,s}(x)=x.$ In
Section~\ref{sec:measurable_modification} we define the notion of
skeleton. A skeleton $\varphi$ is a sequence of continuous functions
$\varphi_n:[s_n,\infty)\to M,$ $n\in \mathbb{N}$, such that
\begin{itemize}
\item $\{(s_n,\varphi_n(s_n)):n\in \mathbb{N}\}$ is dense
  in $\mathbb{R}\times M$;
\item if $\varphi_n(t)=\varphi_m(t)$ then
  $\varphi_n(u)=\varphi_m(u)$ for all $u\geq t$;
\item for any compact $L\subset \mathbb{R}\times M$, the restrictions
  $\varphi_n$ to \([s,\infty)\) for all \(n,s\) such that
  $s_n\leq s$ and $(s,\varphi_n(s))\in L$ form a relatively compact
  set in the space of all continuous paths on \(M\).
\end{itemize}
Given a skeleton $\varphi$ we prove the existence of a family of
continuous mappings $\theta_{s,\cdot}(x):[s,\infty)\to M,$ such that
for all $(s,x)\in \mathbb{R}\times M,$ $\theta_{s,s}(x)=x$ and
$\theta_{s,t}(\varphi_n(s))=\varphi_n(t)$ for all $n\in \mathbb{N},$
$s_n\leq t.$ We refer to the latter property as the
\textit{preservation of the skeleton}. In our construction each
function $\theta_{s,\cdot}(x)$ is a limit point of the sequence
$(\theta_{s_n,\cdot}[s,\infty): s_n\leq s)$ in the space of continuous
paths. The existence of the family $\theta$ follows from the
measurable selection Lemma~\ref{lem:selection} given in
Appendix~\ref{sec:selection}.

The strong flow property 
\begin{equation}
\label{eq:sfp_deterministic}
\theta_{s,u}=\theta_{t,u}\circ \theta_{s,t}, \ \forall s\leq t\leq u,
\end{equation}
may fail due to the existence of the so-called bifurcation points of
the skeleton $\varphi$. The set $B(\varphi)$ of bifurcation points of
the skeleton $\varphi$ is defined in
Section~\ref{subsection:bifurcation_points}. The main idea of our
approach is to achieve \ref{eq:sfp_deterministic} by modifying
functions $\theta_{s,\cdot}(x)$ when they hit the set of bifurcation
points $B(\varphi)$ (to be precise, when they hit some larger closed
set $F\supset B(\varphi)$) in such a way that
\ref{eq:sfp_deterministic} holds. Theorem \ref{thm:sfp_deterministic}
gives a sufficient condition under which one can construct a new
family of continuous mappings $\psi_{s,\cdot}(x):[s,\infty)\to M$,
$\psi_{s,s}(x)=x$, that preserves the initial skeleton $\varphi$ and
that satisfies \ref{eq:sfp_deterministic}.

In Section~\ref{sec:strong-flow-meas}, we let \(\psi^{0}\) be a
stochastic flow of measurable mappings on $M$ that is associated to a
consistent sequence of coalescing Feller transition functions
$(\mathsf{P}^{(n)}_{\bullet}:n\in \mathbb{N})$ on $M$ that satisfies conditions
(TF~\ref{item:consistency}), (TF~\ref{item:coalescing_property}),
(TF~\ref{item:continuity_of_trajectories_TF3}) and
(TF~\ref{item:14}). A first measurable continuous modification
$\theta$ of $\psi^0$ is given. The results of
Section~\ref{sec:measurable_modification} are then applied to give
sufficient conditions under which there exists a strong measurable
continuous modification $\psi$ of $\psi^0$. These
conditions are stated in Theorem \ref{thm:sfp_stochastic}.

In Section~\ref{sec:icp} we study stochastic flows and their skeletons
that possess the instantaneous colasecing property. For such flows we
give sufficient conditions under which they possess strong measurable
continuous modifications (Proposition~ \ref{prop:icp-when-m}).

In Section~\ref{sec:sfp_IC} we study instantaneously coalescing
stochatic flows on metric graphs.
Corollary~\ref{cor:sfp_metric_graph} gives a sufficient condition for
the existence of strong measurable continuous modifications for
stochastic flows of measurable mappings in a metric graph.

In the final Section~\ref{sec:examples} we give several applications
of our approach. In Section~\ref{subsec:Coalescing independent Walsh
  Brownian motions on a metric graph} we prove the existence of a
strong measurable continuous stochastic flow of measurable mappings in a metric
graph, whose trajectories are coalescing Walsh Brownian motions
independent before the meeting time. In Section~\ref{subsec:Coalescing
  Tanaka flow} we prove the existence of a strong measurable
continuous stochastic flows of mappings in $\mathbb{R}$, whose
trajectories are solutions to Tanaka's SDE. In
Section~\ref{subsec:BK flow} we prove the existence of a strong
measurable continuous stochastic flow of mappings in $\mathbb{R},$
whose trajectories are solutions to the Harrison-Shepp SDE for the
skew Brownian motion (a Burdzy-Kaspi flow, see \cite{zbMATH02148685}). In
Section~\ref{subsec:Tanaka_graph} we prove the existence of a strong
measurable continuous stochastic flow of mappings in a metric graph $M,$
whose trajectories are solutions to the Tanaka's SDE on $M$ (see
\cite{zbMATH06049114}, \cite{zbMATH06257629}).

\section{Flow extensions of skeletons on \(M\)}
\label{sec:measurable_modification}

In Section~\ref{sec:notation_skeleton} we introduce \(X\), the
\textit{space of continuous paths in $M$}, and the notion of a
skeleton in $M$. In Section \ref{subsection:skeleton_extension} a
measurable mapping
\(\Theta:\mathbb{R}\times{M}\times\mathcal{S}(M)\to{}X\) that will
allow us to construct for every skeleton $\varphi$ a family of
continuous mappings $\theta_{s,\cdot}(x):[s,\infty)\to M$, such that
\begin{enumerate}[(i)]
\item for all $(s,x)\in \mathbb{R}\times M$, $\theta_{s,s}(x)=x$;\label{item:19}
\item for all \(n\in\mathbb{N}\) and \(t\geq s\geq s_{n}\),\label{item:20}
  $\theta_{s,t}(\varphi_n(s))=\varphi_n(t)$.
\end{enumerate}
A family \(\theta\) satisfying~\eqref{item:19} and~\eqref{item:20} will be
said to \textit{preserves the skeleton} \(\varphi\).

In Section \ref{subsection:bifurcation_points} the set $B(\varphi)$ of
bifurcation points of a skeleton $\varphi$ is defined. Out of $\theta$
we define a new family of mappings $\psi_{s,\cdot}(x):[s,\infty)\to M$
that by construction satisfies the property
$\psi_{s,t}\circ \psi_{r,s}(x)=\psi_{r,t}(x)$ at certain points
$(s,\psi_{r,s}(x))\in B(\varphi).$ In
Theorem~\ref{thm:sfp_deterministic} we give a sufficient condition
under which $\psi$ is a family of continuous mappings that preserves
the skeleton $\varphi$ and satisfies the strong flow property
\eqref{eq:sfp_deterministic}.

\subsection{The space of skeletons in \(M \)}
\label{sec:notation_skeleton}

As above, \((M,\rho)\) is a locally compact separable metric
space. Without loss of generality we will assume that $\rho$ is
complete and that all bounded subsets of $(M,\rho)$ are relatively
compact\footnote{When it is not the case, then there is a distance
  \(\rho'\) topologically equivalent to \(\rho\) for which these two
  additional properties are satisfied.}. For any interval
$I\subset \mathbb{R}$ (possibly, unbounded) the space \(C(I:M)\) of
continuous functions from \(I\) to \(M\) is equipped with the topology
of uniform convergence on compact subsets of $I $. Let us introduce
the space of continuous paths in $M$:
$$
X=\bigcup_{s\in \mathbb{R}}C([s,\infty):M).
$$
Each element of $X$ is a continuous function $f:[s,\infty)\to M$. The
starting time $s$ will be denoted by $i(f):$ $i(f)=s$.  If $f\in X$
and $s=i(f)$, we denote  $e(f)\in{}C(\mathbb{R}:M)$ the continuous
extension of $f$ onto $\mathbb{R},$
$$
e(f)(t)=f(s\vee t),\quad t\in \mathbb{R}.
$$ 
Let \(\delta\) be a distance on \(C(\mathbb{R}:M)\) associated with
the topology of uniform convergence on compact subsets of
$\mathbb{R}$\footnote{For example, one can take
  \(\delta(f,g)=\sum_{n\geq{1}}2^{-n}\left(1\wedge
    \sup_{x\in[-n,n]}\rho(f(x),g(x))\right)\).}. The space
\(\mathbb{R}\times{}C(\mathbb{R}:M)\) equipped with the metric
\(d_{1}((s,f),(t,g))=|t-s|+\delta(f,g)\) is a complete separable
metric space. Let
\(X'=\{(s,f)\in\mathbb{R}\times{}C(\mathbb{R}:M):\,\forall{}t\leq{s},f(t)=f(s)\}\).
Then \(X'\) is a closed subset of
\(\mathbb{R}\times{}C(\mathbb{R}:M)\) and is a complete separable
metric space.

Let us equip \(X\) with the distance \(d\) defined by
\(d(f,g)=|i(f)-i(g)|+\delta(e(f),e(g))\). Then the mapping
\(f\mapsto(i(f),e(f))\) from \(X\) onto \(X'\) is an isometry. As a
consequence, \((X,d)\) is a complete separable metric space and its
Borel \(\sigma\)-field \(\mathcal{B}(X)\) is the \(\sigma\)-field
generated by the mappings \(f\mapsto{}e(f)(t)\) for all
\(t\in\mathbb{R}\) and \(f\mapsto{}i(f)\).

For any \(f\in{X}\) and any interval \(I\), we will denote by \(fI\)
the restriction of \(e(f)\) to \(I\). Note that the mapping
\(f\mapsto{}fI\) is a continuous (hence, Borel measurable) mapping
from \(X\) to \(C(I:M)\).

Let \(\varphi=(\varphi_n:n\in \mathbb{N})\in X^{ \mathbb{N}}\) be a
sequence in $X$. For \(n\in \mathbb{N}\), set $s_n=i(\varphi{}_n)$ and
\(x_n=\varphi_n(s_n)\). For $s\in\mathbb{R}$, set
$I^s:=\{n\in\mathbb{N}:\ s_n\le s\}$.

\begin{definition}
  \label{def:skeleton}
  The sequence \(\varphi\in X^{\mathbb{N}}\) is called a skeleton if
  the following properties are satisfied:
  \begin{enumerate}[(Sk~1)]
  \item \label{item:coalescence} If \(s\in\mathbb{R}\) and if
    \((m,n)\in{I^{s}}\times{I^{s}}\) are such that
    \(\varphi_{m}(s)=\varphi_{n}(s)\), then
    \(\varphi_{m}[s,\infty)=\varphi_{n}[s,\infty)\)
    (i.e. \(\varphi_{m}(t)=\varphi_{n}(t)\) for all \(t\geq{s}\));
  \item \label{item:denseness} The sequence $((s_n,x_n):n\in \mathbb{N})$ is
    dense in $\mathbb{R}\times M$;
  \item \label{item:rel_comp} For any compact
    $L\subset \mathbb{R}\times M,$ the set
    $\{\varphi{}_n[s,\infty):\ n\in I^s,\, (s,\varphi_n(s))\in L\}$ is
    relatively compact in $X$.
  \end{enumerate}

\end{definition}

Let us denote by \(\mathcal{S}(M)\) the space of all skeletons on $M$.
The space $X^{\mathbb{N}}$, equipped with the product topology, is a
separable completely metrizable space (see
\cite[Th. 2.4.3]{zbMATH01166155}). Note that $\mathcal{S}(M)$ is a
Borel subset of $X^{\mathbb{N}}$. We equip \(\mathcal{S}(M)\) with the
subspace topology induced from $X^\mathbb{N}$ and its corresponding
Borel \(\sigma\)-field. Thus, the $\sigma$-field on $\mathcal{S}(M)$
is generated by the mappings $\varphi\mapsto i(\varphi_n),$
$\varphi\mapsto e(\varphi_n)(t), \ n\in \mathbb{N}, \ t\in\mathbb{R}$.

\smallskip{}Let us prove the following lemma that is satisfied by
every skeleton \(\varphi\). Throughout the paper $B(x,r)$ (resp.,
$\overline{B}(x,r)$) will denote an open (resp., closed) ball in $M$
with center $x$ and radius $r$.
\begin{lemma}
  \label{lem:past_dense}
  If \(\varphi\in\mathcal{S}(M)\), then for all \(s\in\mathbb{R}\) and
  $\delta>0$ the set
  \(\{\varphi_n(s): n\in I^s\setminus I^{s-\delta}\}\) is dense in
  $M$.
\end{lemma}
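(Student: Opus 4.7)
The plan is to combine the density property (Sk~\ref{item:denseness}) with the relative compactness property (Sk~\ref{item:rel_comp}) via a contradiction argument; (Sk~\ref{item:coalescence}) plays no role here. The key obstacle is translating the density of the pairs $(s_n,x_n)=(s_n,\varphi_n(s_n))$ into information about the values $\varphi_n(s)$ at the later time $s$; this bridge is provided precisely by (Sk~\ref{item:rel_comp}).

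Fix $s\in\mathbb{R}$, $\delta>0$, $x\in M$ and $\varepsilon>0$. I want to exhibit some $n$ with $s-\delta<s_n\leq s$ and $\rho(\varphi_n(s),x)<\varepsilon$. Assume for contradiction no such $n$ exists, i.e.\ $\rho(\varphi_n(s),x)\geq\varepsilon$ whenever $s-\delta<s_n\leq s$. By (Sk~\ref{item:denseness}) applied to the open set $(s-\delta,s)\times B(x,1)$, I can extract a sequence $(n_{k})$ with $s_{n_{k}}\in(s-\delta,s)$, $s_{n_{k}}\to s$ and $x_{n_{k}}=\varphi_{n_{k}}(s_{n_{k}})\to x$.

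Now I apply (Sk~\ref{item:rel_comp}) with the compact set $L=[s-\delta,s]\times\overline{B}(x,1)$ and the special choice $s'=s_{n}$ (allowed since $n\in I^{s_{n}}$ trivially): the whole paths $\varphi_{n_{k}}=\varphi_{n_{k}}[s_{n_{k}},\infty)$, which lie in the family associated to $L$, form a relatively compact set in $X$. So I may pass to a subsequence (still denoted $n_{k}$) and obtain $\varphi_{n_{k}}\to f$ in $(X,d)$, which means $i(\varphi_{n_{k}})=s_{n_{k}}\to i(f)$, hence $i(f)=s$, and $e(\varphi_{n_{k}})\to e(f)$ uniformly on every compact subset of $\mathbb{R}$, in particular on $[s-\delta,s+1]$.

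From uniform convergence on $[s-\delta,s+1]$ plus continuity of $e(f)$, the double limit gives
\[
x=\lim_{k}e(\varphi_{n_{k}})(s_{n_{k}})=e(f)(s)=f(s)=\lim_{k}e(\varphi_{n_{k}})(s)=\lim_{k}\varphi_{n_{k}}(s),
\]
where the left-hand identity is $x_{n_{k}}\to x$ and the right-hand one uses $s_{n_{k}}\leq s$ and $s=i(f)$ so that $e(\varphi_{n_{k}})(s)=\varphi_{n_{k}}(s)$. This contradicts $\rho(\varphi_{n_{k}}(s),x)\geq\varepsilon$ for all $k$, completing the proof. The only nontrivial step is the double-limit argument, which simply amounts to noting that uniform convergence of $e(\varphi_{n_{k}})$ on the compact interval $[s-\delta,s+1]$ allows one to replace the moving evaluation point $s_{n_{k}}$ by its limit $s$.
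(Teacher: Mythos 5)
Your proof is correct and rests on exactly the same two ingredients as the paper's: (Sk~\ref{item:denseness}) to produce starting points $(s_{n},x_{n})$ approaching $(s,x)$ from the strict past, and (Sk~\ref{item:rel_comp}) to control the trajectories between time $s_{n}$ and time $s$. The only difference is presentational: the paper argues directly, extracting from (Sk~\ref{item:rel_comp}) a uniform equicontinuity modulus $\alpha$ so that one well-chosen $n$ with $(s_{n},x_{n})\in(s-\alpha,s)\times B(x,\tfrac{\varepsilon}{2})$ already gives $\rho(\varphi_{n}(s),x)<\varepsilon$, whereas you argue by contradiction via subsequence extraction and a double-limit; both are routine consequences of the same relative-compactness hypothesis.
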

\begin{proof}
  Let us fix \(s\in\mathbb{R}, x\in{M}\) and \(\varepsilon>0\). By
  (Sk~\ref{item:rel_comp}) the set of trajectories
  \(\{\varphi_n: (s_n,x_n)\in (s-\delta,s)\times B(x,\varepsilon)\}\)
  is relatively compact in $X$. In particular, there exists
  $\alpha\in(0,\delta)$ such that for any $n$ with
  $(s_n,x_n)\in (s-\delta,s)\times B(x,\varepsilon)$, we have
  $\sup_{t\in
    [s_n,s_n+\alpha]}\rho(\varphi_n(t),x_n)<\frac{\varepsilon}{2}$. Using
  (Sk~\ref{item:denseness}), there exists \(n\in\mathbb{N}\) such that
  $(s_n,x_n)\in (s-\alpha,s)\times B(x,\frac{\varepsilon}{2})$. Then
  we get that 
  $\rho(\varphi_n(s),x)<\varepsilon$.
\end{proof}

\subsection{The measurable mapping \(\Theta\)}
\label{subsection:skeleton_extension}
From Lemma \ref{lem:selection} stated in Appendix~\ref{sec:selection},
it follows that there exists a measurable mapping
$\ell:X^{\mathbb{N}}\to X$ such that for any relatively compact
sequence $(f_n:n\in \mathbb{N})$ in X, $\ell((f_n:n\in \mathbb{N}))$
is a limit point of this sequence. We fix such a mapping from now on.

Let $(\varepsilon_k:k\in \mathbb{N})$ be a sequence of measurable
functions from \(M\) into \((0,\infty)\) such that for every $x\in M$,
the sequence \((\varepsilon_{k}(x): k \in \mathbb{N})\) is
non-increasing and converges to \(0\) as \(k\to\infty\).
For all $(s,x)\in \mathbb{R}\times M$ and all \(k\in \mathbb{N}\), set
\begin{equation}
  \label{eq:approximating_sequence}
  \left\{
    \begin{aligned}
      &n^{s,x}_k=\inf \{n\in I^s:
        \rho(\varphi_n(s),x)<\varepsilon_k(x)\},\\
      &\varphi^{s,x}_k=\varphi_{n^{s,x}_k}[s,\infty).
    \end{aligned}
  \right.
\end{equation}

Lemma~\ref{lem:past_dense} ensures that \(n^{s,x}_{k}<\infty\) for all
\((s,x)\in\mathbb{R}\times{M}\) and all \(k\in\mathbb{N}\). This
defines a sequence $(\varphi^{s,x}_k:k\in \mathbb{N})$ in
$C([s,\infty):M)$. Let us now define the mapping \(\Theta\) such that
for all $(s,x,\varphi)\in\mathbb{R}\times{M}\times\mathcal{S}(M)$,
\begin{displaymath}
  \Theta(s,x,\varphi)=\ell\big((\varphi^{s,x}_k:k\in \mathbb{N})\big).
\end{displaymath}

\begin{theorem}
  \label{thm:measurable_extension}
  The mapping $\Theta:\mathbb{R}\times{M}\times\mathcal{S}(M)\to{}X$
  is measurable and satisfies the following properties for all
  \((s,\varphi)\in\mathbb{R}\times\mathcal{S}(m)\):
  \begin{itemize}
  \item For all $x\in M$, $\Theta(s,x,\varphi)\in{}C([s,\infty):M)$
    and $\Theta(s,x,\varphi)(s)=x$.
  \item For all $n\in I^s$,
    $\Theta(s,\varphi_n(s),\varphi)=\varphi_n[s,\infty).$
    
  \end{itemize}
\end{theorem}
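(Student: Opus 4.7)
The plan is to decompose $\Theta$ as $\Theta = \ell\circ\Phi$ with $\Phi(s,x,\varphi) = (\varphi^{s,x}_k)_{k\in\mathbb{N}}$, so that the measurability of $\Theta$ follows from Lemma~\ref{lem:selection} together with the Borel measurability of $\Phi$ into $X^{\mathbb{N}}$ and the relative compactness of its image (the latter being needed so that $\ell$ returns a genuine limit point of the sequence). The two pointwise properties will then be read off directly from the construction of $(\varphi^{s,x}_k)$.

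For the measurability of $\Phi$, the key inputs are that $\varphi\mapsto\varphi_m$ is continuous from $\mathcal{S}(M)$ to $X$ (a coordinate projection on a product of separable metric spaces), $f\mapsto i(f)$ is continuous on $X$, and $(s,f)\mapsto e(f)(s)$ is jointly continuous from $\mathbb{R}\times X$ to $M$ (a consequence of $\delta$ metrizing uniform convergence on compacts of $\mathbb{R}$). Together with the measurability of $\varepsilon_k$, these make each set
\[
  A_{m,k}=\{(s,x,\varphi):i(\varphi_m)\leq s \text{ and } \rho(e(\varphi_m)(s),x)<\varepsilon_k(x)\}
\]
Borel in $\mathbb{R}\times M\times\mathcal{S}(M)$. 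Hence the level sets $\{n^{s,x}_k=m\}=A_{m,k}\setminus\bigcup_{j<m}A_{j,k}$ are Borel, and partitioning by the value of $n^{s,x}_k$, while using the continuity of $(s,\varphi)\mapsto\varphi_m[s,\infty)$ for each fixed $m$, shows that $\Phi$ is Borel. For the relative compactness of the image, note that $\varphi_{n^{s,x}_k}(s)\in\overline{B}(x,\varepsilon_1(x))$ since the sequence $\varepsilon_k(x)$ is non-increasing; axiom (Sk~\ref{item:rel_comp}) applied to the compact set $L=\{s\}\times\overline{B}(x,\varepsilon_1(x))$ (compact thanks to the standing assumption that bounded sets of $M$ are relatively compact) gives the required relative compactness.

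For the pointwise properties, every term of the sequence satisfies $i(\varphi^{s,x}_k)=s$, so by continuity of $i$ any limit point $f=\Theta(s,x,\varphi)$ in $X$ satisfies $i(f)=s$ and thus $f\in C([s,\infty):M)$; joint continuity of $e(\cdot)(s)$ combined with $\rho(\varphi^{s,x}_k(s),x)<\varepsilon_k(x)\to 0$ gives $f(s)=x$. For the preservation of the skeleton, fix $n\in I^s$ and set $x=\varphi_n(s)$: since $(\varepsilon_k(x))_k$ is non-increasing, the integers $n^{s,x}_k$ are non-decreasing in $k$, and $n^{s,x}_k\leq n$ because $n$ itself satisfies the defining condition; hence $n^{s,x}_k$ stabilizes at some $m_0\leq n$. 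The condition $\rho(\varphi_{m_0}(s),x)<\varepsilon_k(x)\to 0$ forces $\varphi_{m_0}(s)=x=\varphi_n(s)$, and (Sk~\ref{item:coalescence}) then gives $\varphi_{m_0}[s,\infty)=\varphi_n[s,\infty)$. Therefore $\varphi^{s,x}_k=\varphi_n[s,\infty)$ for all large $k$, the sequence converges in $X$, and its unique limit point is $\varphi_n[s,\infty)$, which must then equal $\Theta(s,x,\varphi)$.

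The main obstacle is the joint Borel measurability of the $\mathbb{N}$-valued selection $n^{s,x}_k$ in the three variables $(s,x,\varphi)$; this is where the joint continuity of the evaluation $(s,f)\mapsto e(f)(s)$ plays an essential role. The remaining pieces---relative compactness via (Sk~\ref{item:rel_comp}), and the preservation property via the coalescence axiom (Sk~\ref{item:coalescence})---are more routine once this measurability is secured.
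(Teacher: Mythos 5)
Your proposal is correct and follows essentially the same route as the paper: decompose $\Theta=\ell\circ\Phi$, prove measurability of $(s,x,\varphi)\mapsto n^{s,x}_k$ via the level sets, get relative compactness of $(\varphi^{s,x}_k)_k$ from (Sk~\ref{item:rel_comp}), and use stationarity of $(n^{s,x}_k)_k$ together with (Sk~\ref{item:coalescence}) for the skeleton-preservation property. The only differences are that you fill in some details the paper leaves implicit (why the selection sequence is stationary and bounded by $n$, and the explicit compact $L=\{s\}\times\overline{B}(x,\varepsilon_1(x))$), which is fine.
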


\begin{proof}
  Let us fix
  \((s,x,\varphi)\in\mathbb{R}\times{M}\times\mathcal{S}(M)\). By
  (Sk~\ref{item:rel_comp}) the sequence
  $(\varphi^{s,x}_k:k\in\mathbb{N})$ is relatively compact in
  $C([s,\infty):M)$ with $\lim_{k\to\infty}\varphi^{s,x}_k(s)=x$. This
  implies that for all
  $(s,x,\varphi)\in\mathbb{R}\times{M}\times\mathcal{S}(M)$,
  $\Theta(s,x,\varphi)\in{}C([s,\infty):M)$ and
  $\Theta(s,x,\varphi)(s)=x$.

  We now verify that $\Theta$ preserves the trajectories of the
  skeleton. Let us fix $(s,\varphi)\in\mathbb{R}\times\mathcal{S}(M)$
  and let \(n\in I^{s}\). Set \(x=\varphi_{n}(s)\). Then the sequence
  \((n^{s,x}_{k}:k\in\mathbb{N})\) is stationary and
  \(\Theta(s,x,\varphi)=\varphi_{n^{*}}[s,\infty)\) for some
  \(n^{*}\in{}I^{s}\). We have that
  $\varphi_{n^*}(s)=\varphi_n(s)=x$. Now, property
  (Sk~\ref{item:coalescence}) implies that
  \(\varphi_{n}[s,\infty)=\varphi_{n^{*}}[s,\infty)=\Theta(s,x,\varphi)\).

  It remains to show that $\Theta$ is measurable.  Let us fix
  \(k\in\mathbb{N}\). The mapping \((s,x,\varphi)\mapsto{}n^{s,x}_{k}\) is
  measurable since for all \(m\in\mathbb{N}\),
  \begin{equation*}
    \{(s,x,\varphi):n^{s,x}_{k}\geq{m}\} =
    \cap_{n<m} \{ (s,x,\varphi) :
    s_n\leq{s} \Rightarrow \rho\left(\varphi_{n}(s),x\right)\geq \varepsilon_k(x)
    \}.
  \end{equation*}
  Clearly, \(m\mapsto\varphi_{m}\) is measurable. As a consequence,
  \((s,x,\varphi)\mapsto\varphi^{s,x}_{k}\) is measurable, and the
  measurability of the mapping \(\ell\) implies that \(\Theta\) is
  measurable.
\end{proof}

\subsection{Bifurcation points}
\label{subsection:bifurcation_points}

Let $\Theta$ be the measurable mapping constructed in
Section~\ref{subsection:skeleton_extension} out of a given sequence
\((\varepsilon_{k}:k\in\mathbb{N})\), and let
\(\varphi\in\mathcal{S}(M)\) be a skeleton. For \(s\leq t\) and
\(x\in M\), set $\theta_{s,t}(x)=\Theta(s,x,\varphi)(t)$. Then
$\theta_{s,\cdot}(x)\in C([s,\infty):M)$ defines a family of
continuous mappings that preserves the skeleton \(\varphi\) (i.e.,
$\theta_{s,s}(x)=x$ and $\theta_{s,t}(\varphi_n(s))=\varphi_n(t)$ if
$s_{n}\leq s\leq t$). Note that the mapping
$(s,t,x,\varphi)\mapsto \theta_{s,t}(x)$ is measurable.

For \(s<t\), \(\varepsilon>0\) and \(x\in{M}\), denote by
\(\mathcal{K}^{s,t}_{\varepsilon,x}\) the closure of
\(\{\varphi_n[s,t]:\,n\in{}I^{s}, \varphi_n(s)\in B(x,\varepsilon)\}\)
in $C([s,t]:M)$. Assumption (Sk~\ref{item:rel_comp}) implies that
\(\mathcal{K}^{s,t}_{\varepsilon,x}\) is a compact subset of
\(C([s,t]:M)\) for all \(\varepsilon>0\). Lemma \ref{lem:past_dense}
implies that \(\mathcal{K}^{s,t}_{\varepsilon,x}\) is
non-empty. Therefore,
\(\mathcal{K}^{s,t}_{x}:=\cap_{\varepsilon>0}\mathcal{K}^{s,t}_{\varepsilon,x}\)
is a non-empty compact subset of \(C([s,t]:M)\). In fact,
$\mathcal{K}^{s,t}_x$ contains the restriction
$\theta_{s,\cdot}(x)[s,t].$ For \(s<t\), set
\(\nu^{s,t}_{x}:=\#\mathcal{K}^{s,t}_{x}\) (with $\nu^{s,t}_x=\infty,$
if $\mathcal{K}^{s,t}_x$ is infinite).

\begin{lemma}
  For all \((s,x)\), the mapping \(t\mapsto{}\nu^{s,t}_{x}\) is
  left-continuous and non-decreasing.
\end{lemma}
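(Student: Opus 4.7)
The plan is to prove monotonicity by establishing that the canonical restriction map is surjective on the compact sets, and then to deduce left-continuity by combining monotonicity with a separation argument based on the continuity of paths.

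For monotonicity, fix $s<t_1<t_2$ and let $R:C([s,t_2]:M)\to C([s,t_1]:M)$ denote the (continuous) restriction map $f\mapsto f|_{[s,t_1]}$. For every $\varepsilon>0$, the set $R(\mathcal{K}^{s,t_2}_{\varepsilon,x})$ is compact, hence closed in $C([s,t_1]:M)$, and contains the restrictions of all $\varphi_n$ with $n\in I^{s}$ and $\varphi_n(s)\in B(x,\varepsilon)$; taking closures yields $R(\mathcal{K}^{s,t_2}_{\varepsilon,x})=\mathcal{K}^{s,t_1}_{\varepsilon,x}$. I would then fix an arbitrary $g\in\mathcal{K}^{s,t_1}_{x}$ and observe that for each $\varepsilon>0$, $R^{-1}(\{g\})\cap\mathcal{K}^{s,t_2}_{\varepsilon,x}$ is a non-empty closed subset of the compact set $\mathcal{K}^{s,t_2}_{\varepsilon_0,x}$ (for any fixed $\varepsilon_0$). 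These sets are decreasing in $\varepsilon$, so by the finite intersection property their intersection is non-empty, and any element in it is an $f\in\mathcal{K}^{s,t_2}_{x}$ with $f|_{[s,t_1]}=g$. Thus $R:\mathcal{K}^{s,t_2}_{x}\to\mathcal{K}^{s,t_1}_{x}$ is surjective, which immediately gives $\nu^{s,t_1}_{x}\leq\nu^{s,t_2}_{x}$.

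For left-continuity at $t$, consider any sequence $t_n\uparrow t$. Monotonicity already yields $\lim_{n}\nu^{s,t_n}_{x}\leq\nu^{s,t}_{x}$, so the task is the converse bound. If $\nu^{s,t}_{x}=N<\infty$, write $\mathcal{K}^{s,t}_{x}=\{f_1,\dots,f_N\}$. For each pair $i\neq j$, the continuous function $u\mapsto\rho(f_i(u),f_j(u))$ is positive somewhere on $[s,t]$, hence, by continuity of the paths, positive at some $u_{ij}\in[s,t)$ (if it vanishes throughout $[s,t)$ then by continuity it would also vanish at $t$). For $n$ large enough that $t_n>\max_{i<j}u_{ij}$, the restrictions $f_i|_{[s,t_n]}$ are pairwise distinct; since they all lie in $\mathcal{K}^{s,t_n}_{x}$ by the monotonicity argument above, we conclude $\nu^{s,t_n}_{x}\geq N$, so equality holds eventually. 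If $\nu^{s,t}_{x}=\infty$, the same argument applied to any finite subfamily of size $N$ inside $\mathcal{K}^{s,t}_{x}$ produces $n$ with $\nu^{s,t_n}_{x}\geq N$, whence $\lim_n\nu^{s,t_n}_{x}=\infty$.

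The only delicate point is the surjectivity of the restriction map onto the limit $\mathcal{K}^{s,t_1}_{x}$: one must not confuse $\cap_{\varepsilon}R(\mathcal{K}^{s,t_2}_{\varepsilon,x})$ with $R(\cap_{\varepsilon}\mathcal{K}^{s,t_2}_{\varepsilon,x})$, and the argument works only because each $\mathcal{K}^{s,t_2}_{\varepsilon,x}$ is compact (so the finite intersection property applies). Once that is set up, everything else is elementary continuity bookkeeping.
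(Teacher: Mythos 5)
Your proof is correct and follows essentially the same route as the paper: monotonicity via surjectivity of the restriction map $\mathcal{K}^{s,u}_{x}\to\mathcal{K}^{s,t}_{x}$, and left-continuity by separating finitely many distinct elements of $\mathcal{K}^{s,t}_{x}$ at a time strictly before $t$. The only difference is that the paper simply asserts the surjectivity, whereas you supply the justification (compactness of each $\mathcal{K}^{s,u}_{\varepsilon,x}$ plus the finite intersection property, correctly distinguishing $\bigcap_{\varepsilon}R\big(\mathcal{K}^{s,u}_{\varepsilon,x}\big)$ from $R\big(\bigcap_{\varepsilon}\mathcal{K}^{s,u}_{\varepsilon,x}\big)$), which is indeed the one non-trivial point of the argument.
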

\begin{proof}
  Let us be given \(s<t<u\) and \(x\in{M}\). If
  \(f\in\mathcal{K}^{s,u}_{x}\), then the restriction \(f[s,t]\) of
  \(f\) to \([s,t]\) belongs to \(\mathcal{K}^{s,t}_{x}\). The mapping
  \(f\mapsto{}f[s,t]\) is a surjective mapping from
  \(\mathcal{K}^{s,u}_{x}\) onto \(\mathcal{K}^{s,t}_{x}\), which
  shows that \(\nu^{s,t}_{x}\leq{}\nu^{s,u}_{x}\) and so
  \(t\mapsto{}\nu^{s,t}_{x}\) is non-decreasing.

  It remains to prove that \(t\mapsto{}\nu^{s,t}_{x}\) is
  left-continuous. Let $t>s$ and let $m$ be an integer such that
  $1\leq m\leq \nu^{s,t}_x$. There are at least $m$ distinct functions
  $f_1,\ldots,f_{m}\in \mathcal{K}^{s,t}_x.$ By continuity, for some
  $r\in (s,t)$ the restrictions
  $f_1[s,r],\ldots,f_{m}[s,r]\in \mathcal{K}^{s,r}_x$ are all
  distinct. Hence, $\nu^{s,t-}_x\geq \nu^{s,r}_x\geq m.$ It follows
  that $\nu^{s,t-}_x=\nu^{s,t}_x.$
\end{proof}

For \((s,x)\in\mathbb{R}\times{M}\), set
$\tau^s_x=\inf\{t> s: \nu^{s,t}_x\geq 2\}.$

\begin{definition}
\label{def:bifurcation_points}
The set \(B(\varphi):=\{(s,x)\in\mathbb{R}\times{M}:\,\tau^s_x=s\}\)
is called the set of bifurcation points of the skeleton $\varphi$.
\end{definition}

For every $(s,x)\in \mathbb{R}\times M$ and every $t\in (s,\tau^s_x)$
the set $\mathcal{K}^{s,t}_x$ contains a single function
$\theta_{s,\cdot}(x)[s,t]$.

\begin{lemma}
  \label{lem:hitting_B}
  If $\tau^s_x<\infty,$ then
  $\left(\tau^s_x,\theta_{s,\tau^s_x}(x)\right)\in B(\varphi)$.
\end{lemma}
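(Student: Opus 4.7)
Write $\tau=\tau^{s}_{x}$ and $y=\theta_{s,\tau}(x)$. The goal is to show $\tau^{\tau}_{y}=\tau$, that is, for every $t>\tau$ one has $\nu^{\tau,t}_{y}\geq 2$.

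The plan is as follows. First, I would exploit the previous lemma: since $t\mapsto\nu^{s,t}_{x}$ is left-continuous and equal to $1$ on $(s,\tau)$ (by definition of $\tau$ as an infimum), one obtains $\nu^{s,\tau}_{x}=1$. Thus $\mathcal{K}^{s,\tau}_{x}$ contains exactly one function, which must be the restriction $\theta_{s,\cdot}(x)[s,\tau]$; in particular every element of the larger compacts $\mathcal{K}^{s,t}_{x}$ with $t>\tau$ passes through $y$ at time $\tau$.

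Next, fix $t>\tau$. By definition of $\tau$, $\nu^{s,t}_{x}\geq 2$, so pick two distinct elements $f_{1},f_{2}\in\mathcal{K}^{s,t}_{x}$. The surjection $f\mapsto f[s,\tau]$ from $\mathcal{K}^{s,t}_{x}$ onto $\mathcal{K}^{s,\tau}_{x}$ (used in the previous lemma) forces $f_{1}[s,\tau]=f_{2}[s,\tau]=\theta_{s,\cdot}(x)[s,\tau]$, so $f_{1}(\tau)=f_{2}(\tau)=y$ and $f_{1}[\tau,t]\neq f_{2}[\tau,t]$. It then suffices to prove that $f_{i}[\tau,t]\in\mathcal{K}^{\tau,t}_{y}$ for $i=1,2$: combined with distinctness this yields $\nu^{\tau,t}_{y}\geq 2$, and since $t>\tau$ was arbitrary, $\tau^{\tau}_{y}=\tau$, i.e., $(\tau,y)\in B(\varphi)$.

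The core step is this last claim. Fix $\varepsilon>0$. Since $f_{i}\in\mathcal{K}^{s,t}_{\varepsilon',x}$ for every $\varepsilon'>0$, one can choose a sequence of indices $(n_{k})$ with $s_{n_{k}}\leq s$, $\varphi_{n_{k}}(s)\to x$ and $\varphi_{n_{k}}[s,t]\to f_{i}$ in $C([s,t]:M)$. Uniform convergence on $[s,t]$ gives $\varphi_{n_{k}}(\tau)\to f_{i}(\tau)=y$, so for $k$ large enough $\varphi_{n_{k}}(\tau)\in B(y,\varepsilon)$ and $s_{n_{k}}\leq s<\tau$, i.e.\ $n_{k}\in I^{\tau}$. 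Therefore $\varphi_{n_{k}}[\tau,t]$ lies in the defining set of $\mathcal{K}^{\tau,t}_{\varepsilon,y}$ and converges in $C([\tau,t]:M)$ to $f_{i}[\tau,t]$; hence $f_{i}[\tau,t]\in\mathcal{K}^{\tau,t}_{\varepsilon,y}$. Intersecting over $\varepsilon>0$ gives $f_{i}[\tau,t]\in\mathcal{K}^{\tau,t}_{y}$, as required.

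The main (and essentially only) subtlety will be the approximation argument of the last paragraph — passing from an approximating sequence for $f_{i}$ on $[s,t]$ to one witnessing membership of $f_{i}[\tau,t]$ in $\mathcal{K}^{\tau,t}_{y}$; everything else reduces to left-continuity of $\nu^{s,\cdot}_{x}$, the singleton property of $\mathcal{K}^{s,\tau}_{x}$, and unwinding definitions.
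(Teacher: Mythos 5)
Your proposal is correct and follows essentially the same route as the paper's proof: pick two distinct elements of $\mathcal{K}^{s,t}_{x}$, note they must agree with $\theta_{s,\cdot}(x)$ on $[s,\tau^{s}_{x}]$ (hence pass through $y$ at time $\tau^{s}_{x}$ and differ afterwards), and show via approximating trajectories of the skeleton that their restrictions to $[\tau^{s}_{x},t]$ lie in $\mathcal{K}^{\tau^{s}_{x},t}_{y}$. You merely make explicit two points the paper leaves implicit — the left-continuity argument giving $\nu^{s,\tau^{s}_{x}}_{x}=1$ and the $\varepsilon$-intersection step — which is a welcome clarification rather than a different method.
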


\begin{proof} For any $t>\tau^s_x$ we have $\nu^{s,t}_x\geq 2$ and
  there exist at least two distinct functions
  $g_1,g_2\in \mathcal{K}^{s,t}_x.$ Let $(n^1_k:k\geq 1)$ and
  $(n^2_k:k\geq 1)$ be two sequences in $I^s$ such that
  \(\varphi_{n^i_k}[s,t]\) converges uniformly towards \(g_i\),
  \(i\in\{1,2\}\). Since
  $g_1[s,\tau^s_x]=g_2[s,\tau^s_x]=\theta_{s,\cdot}(x)[s,\tau^s_x]$,
  it follows that $g_1[\tau^s_x,t]\ne g_2[\tau^s_x,t].$ Further,
  $\lim_{k\to\infty}\varphi_{n^i_k}(\tau^s_x)=\theta_{s,\tau^s_x}(x).$
  Since $\varphi_{n^i_k}[\tau^s_x,t]\to g_i[\tau^s_x,t]$ uniformly on
  $[\tau^s_x,t],$ it follows that
  $g_i[\tau^s_x,t]\in
  \mathcal{K}^{\tau^s_x,t}_{\theta_{s,\tau^s_x}(x)}.$ Hence,
  $\nu^{\tau^s_x,t}_{\theta_{s,\tau^s_x}(x)}\geq 2$ for all
  $t>\tau^s_x.$
\end{proof}

Consider a closed set $F\supset B(\varphi)$. We define for every
\((s,x)\in \mathbb{R}\times M\) a sequence
\(\left((\sigma^{s}_{x}(k),z^{s}_{x}(k)): k\geq 0\right)\) in
$[s,\infty]\times M.$ Let \(\sigma^{s}_{x}(0)=s\) and
\(z^{s}_{x}(0)=x\). For \(k\geq 0\) let
\begin{align*}
  & \sigma^{s}_{x}(k+1)=
    \begin{cases}
      \inf
      \left\{ t>\sigma^{s}_{x}(k):\:
      \left(t,\theta_{\sigma^{s}_{x}(k),t}(z^{s}_{x}(k))\right)\in{F}
      \right\} &\mbox{if}\quad  \sigma^s_x(k)<\infty
      \\
      \infty  &\mbox{if}\quad   \sigma^s_x(k)=\infty
    \end{cases}
  \\
  & z^{s}_{x}(k+1)=
    \begin{cases}
      \theta_{\sigma^{s}_{x}(k),\sigma^{s}_{x}(k+1)}(z^{s}_{x}(k))
      &\mbox{if}\quad \sigma^{s}_{x}(k+1)<\infty
      \\
      x  &\mbox{if}\quad \sigma^{s}_{x}(k+1)=\infty
    \end{cases}
\end{align*}
For all \((s,x)\in \mathbb{R}\times M\),
\((\sigma^{s}_{x}(k):k\geq 0)\) is a non-decreasing sequence in
$[s,\infty].$ If \(\sigma^{s}_{x}(k)=\infty\), then
$\sigma^s_x(j)=\infty$ and $z^s_x(j)=x$ for all $j\geq k$. Let us also
define
\begin{displaymath}
  k^{s}_{x}:=\inf\{k\geq 0: \:
  \sigma^{s}_{x}(k)=\sigma^{s}_{x}(k+1)\}.
\end{displaymath}
For all
\(j\geq k^{s}_{x}\), \(\sigma^{s}_{x}(j)=\sigma^{s}_{x}(k^{s}_{x})\)
and \(z^{s}_{x}(j)=z^{s}_{x}(k^{s}_{x})\). Set
\(\sigma^{s}_{x}(\infty)=\lim_{k\to\infty}\sigma^{s}_{x}(k)\), which
is equal to \(\sigma^{s}_{x}(k^{s}_{x})\) when \(k^{s}_{x}<\infty\).

\begin{lemma}
  \label{lem:sigma_z_measurability}
  For all $k\geq 0$, the mapping $(s,x,\varphi)\mapsto (\sigma^s_x(k),
  z^s_x(k))$ is measurable.
\end{lemma}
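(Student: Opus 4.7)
The plan is to proceed by induction on \(k\). The base case \(k=0\) is immediate since \(\sigma^{s}_{x}(0) = s\) and \(z^{s}_{x}(0) = x\) depend continuously on \((s,x,\varphi)\). For the inductive step, assume \((s,x,\varphi) \mapsto (\sigma^{s}_{x}(k), z^{s}_{x}(k))\) is measurable. The set \(A_{k} := \{(s,x,\varphi) : \sigma^{s}_{x}(k) < \infty\}\) is then measurable. On \(A_{k}^{c}\), the recursive definitions give \(\sigma^{s}_{x}(k+1) = \infty\) and \(z^{s}_{x}(k+1) = x\), which are trivially measurable, so the whole question reduces to \(A_{k}\).

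On \(A_{k}\), I would consider the composite map \(\Phi_{k}(s,x,\varphi) := \Theta(\sigma^{s}_{x}(k), z^{s}_{x}(k), \varphi)\). By the inductive hypothesis and Theorem~\ref{thm:measurable_extension}, \(\Phi_{k}\) is measurable as a map from \(A_{k}\) to \(X\), with \(i(\Phi_{k}(s,x,\varphi)) = \sigma^{s}_{x}(k)\) and \(e(\Phi_{k}(s,x,\varphi))(t) = \theta_{\sigma^{s}_{x}(k),t}(z^{s}_{x}(k))\) for \(t \geq \sigma^{s}_{x}(k)\). In these terms,
\begin{displaymath}
  \sigma^{s}_{x}(k+1) = H(\Phi_{k}(s,x,\varphi)), \qquad H(f) := \inf\{t > i(f) : (t, e(f)(t)) \in F\},
\end{displaymath}
and on \(\{\sigma^{s}_{x}(k+1) < \infty\}\) one has \(z^{s}_{x}(k+1) = e(\Phi_{k}(s,x,\varphi))(\sigma^{s}_{x}(k+1))\). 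Since the evaluation \((f,t) \mapsto e(f)(t)\) is jointly continuous on \(X \times \mathbb{R}\) (it is the map underlying the topology of \(X\)), the measurability of \(z^{s}_{x}(k+1)\) follows as soon as we establish measurability of \(\sigma^{s}_{x}(k+1)\), which in turn reduces to the Borel measurability of \(H : X \to (-\infty, \infty]\).

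The main obstacle is precisely proving this measurability of \(H\). Let \(d_{F}\) denote the distance function to \(F\) in the product metric on \(\mathbb{R} \times M\); since \(F\) is closed, \((t,y) \in F\) if and only if \(d_{F}(t,y) = 0\), and \(d_{F}\) is continuous. For \(f \in X\), the function \(t \mapsto d_{F}(t, e(f)(t))\) is therefore continuous on \(\mathbb{R}\), and a compactness argument yields, for every rational \(r\),
\begin{displaymath}
  \{H < r\} = \{i(\cdot) < r\}\, \cap\, \bigcup_{n \geq 1}\Big\{\inf_{q \in \mathbb{Q} \cap [i(\cdot)+1/n,\, r-1/n]} d_{F}(q, e(\cdot)(q)) = 0\Big\}.
\end{displaymath}
The forward inclusion is immediate; for the reverse, one extracts a convergent subsequence of rationals realising the infimum inside the closed interval and uses continuity of \(d_{F}\) and of \(e(f)\) to produce a genuine hitting point \(q^{*} \in (i(f), r)\). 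For each fixed \(q \in \mathbb{Q}\), the map \(f \mapsto d_{F}(q, e(f)(q))\) is Borel on \(X\) (composition of the measurable evaluation at \(q\) with continuous \(d_{F}\)), and \(\{f : q \in [i(f)+1/n, r-1/n]\}\) is Borel since \(f \mapsto i(f)\) is measurable. A countable Boolean combination then shows \(\{H < r\} \in \mathcal{B}(X)\) for every rational \(r\), hence \(H\) is measurable, and the induction closes.
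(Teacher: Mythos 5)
Your proof is correct and follows essentially the same route as the paper's: induction on \(k\), with the crux being the measurability of the first hitting time of the closed set \(F\) by the continuous path \(t\mapsto\theta_{\sigma^{s}_{x}(k),t}(z^{s}_{x}(k))\), reduced to countably many rational-time conditions via continuity and closedness of \(F\), and then measurability of \(z^{s}_{x}(k+1)\) by composition with the (jointly continuous) evaluation map. The paper's own proof merely sketches the characterization of \(\{\sigma^{s}_{x}(k+1)>t\}\) and leaves the countable reduction implicit; your factorization through the Borel functional \(H\) on \(X\) and the distance function \(d_{F}\) supplies exactly the missing details.
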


\begin{proof}
  For $k=0$ the statement clearly holds. We proceed by induction on
  $k.$ The inequality $\sigma^s_x(k+1)>t$ holds if and only if either
  $\sigma^s_x(k)>t$ or $\sigma^s_x(k)\leq t$ and
  $(r,\theta_{\sigma^s_x(k)},r)(z^s_x(k))\not\in F$ for all
  $r\in (\sigma^s_x(k),u)$ with some $u>t.$ Measurability of
  $z^s_x(k+1)$ follows from measurability of $\sigma^s_x(k),$
  $\sigma^s_x(k+1),$ $z^s_x(k)$ and measurability of $\theta_{s,t}(x)$
  as a function of $(s,t,x,\varphi)$.
\end{proof}

Out of \(\theta\), we now define for all
\((s,x)\in \mathbb{R}\times M\) a function
\(\psi_{s,\cdot}(x):[t,\infty)\to\infty\): let \(t\geq s\).
\begin{itemize}
\item If \(t<\sigma^{s}_{x}(\infty)\), then there is \(k<k^{s}_{x}\) such that
  \(t\in[\sigma^{s}_{x}(k),\sigma^{s}_{x}(k+1))\) and we set
  \begin{displaymath}
    \psi_{s,t}(x)=\theta_{\sigma^{s}_{x}(k),t}(z^{s}_{x}(k)).
  \end{displaymath}
\item If \(t\geq\sigma^{s}_{x}(\infty)\) and \(k^{s}_{x}<\infty\),
  then \(\sigma^{s}_{x}(\infty)=\sigma^{s}_{x}(k^{s}_{x})\)
  and we set
  \begin{displaymath}
    \psi_{s,t}(x)=\theta_{\sigma^{s}_{x}(k^{s}_{x}),t}(z^{s}_{x}(k^{s}_{x})).
  \end{displaymath}
\item If \(t\geq\sigma^{s}_{x}(\infty)\) and \(k^{s}_{x}=\infty\),
  then we set \(\psi_{s,t}(x)=x\).
\end{itemize}
The function \(\psi_{s,\cdot}(x)\) is continuous if
\(\sigma^{s}_{x}(\infty)=\infty\), or if \(k^{s}_{x}<\infty\). 

\medskip{}

The next theorem gives a sufficient condition under which $\psi$ is a
strong flow.

\begin{theorem}
  \label{thm:sfp_deterministic}
  Suppose that for any
  \((s,x)\in \mathbb{R}\times M\),
  \begin{itemize}
  \item  \(k^{s}_{x}<\infty\);
  \item if 
  $k^s_x=0$, then for any \(t>s\) there is
    \(n\in I^{t}\) such that
    \(\theta_{s,\cdot}(x)[t,\infty)=\varphi_{n}[t,\infty)\).
  \end{itemize}
  Then \(\psi\) is a family of continuous mappings that satisfies the
  strong flow property and preserves the skeleton \(\varphi\), i.e.
  \begin{enumerate}[(i)]
  \item for all \((s,x)\in \mathbb{R}\times{M}\),
    \(\psi_{s,\cdot}(x)\in C([s,\infty):M)\) and
    \(\psi_{s,s}(x)=x\);
  \item for all \(s\leq t\) and \(n\in{I^{s}}\),
    $\psi_{s,t}\left(\varphi_n(s)\right)=\varphi_n(t)$.
  \item\label{item:21} for all \(s\leq t\leq u\),
    $\psi_{t,u}\circ\psi_{s,t}=\psi_{s,u}$.
  \end{enumerate}
\end{theorem}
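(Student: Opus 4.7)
The plan is to verify the three properties in turn, with the strong flow property being the real substance. Continuity and $\psi_{s,s}(x)=x$ follow directly from the piecewise construction: the hypothesis $k^s_x<\infty$ means $\psi_{s,\cdot}(x)$ is built from finitely many continuous pieces $\theta_{\sigma^s_x(k),\cdot}(z^s_x(k))$ on successive intervals, and these agree at each junction by the recursive definition $z^s_x(k+1)=\theta_{\sigma^s_x(k),\sigma^s_x(k+1)}(z^s_x(k))$. For (ii), skeleton preservation, Theorem~\ref{thm:measurable_extension} gives $\theta_{s,\cdot}(\varphi_n(s))=\varphi_n[s,\infty)$ whenever $n\in I^s$; a straightforward induction on $k$ then yields $z^s_x(k)=\varphi_n(\sigma^s_x(k))$ and $\theta_{\sigma^s_x(k),\cdot}(z^s_x(k))=\varphi_n[\sigma^s_x(k),\infty)$, since $n\in I^{\sigma^s_x(k)}$, so the pieced function equals $\varphi_n[s,\infty)$.

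For (iii), fix $s\leq t\leq u$ and set $y:=\psi_{s,t}(x)$. The core claim is that $(\sigma^t_y(k),z^t_y(k))_{k\geq 1}$ is a shifted tail of $(\sigma^s_x(k),z^s_x(k))_{k\geq 1}$. Let $j$ be the largest index with $\sigma^s_x(j)\leq t$. The genuinely nontrivial case is $t\in(\sigma^s_x(j),\sigma^s_x(j+1))$ with $j<k^s_x$: here $(t,y)\notin F\supset B(\varphi)$, so $\tau^t_y>t$ and $\mathcal{K}^{t,r}_y=\{\theta_{t,\cdot}(y)[t,r]\}$ for $r\in(t,\tau^t_y)$. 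A subsequence $\varphi_{n_l}[\sigma^s_x(j),\cdot]$ approximating $\theta_{\sigma^s_x(j),\cdot}(z^s_x(j))$ has $\varphi_{n_l}(t)\to y$ and $s_{n_l}\leq\sigma^s_x(j)\leq t$, so the restriction $\theta_{\sigma^s_x(j),\cdot}(z^s_x(j))[t,r]$ lies in $\mathcal{K}^{t,r}_y$ and thus coincides with $\theta_{t,\cdot}(y)[t,r]$ on $(t,\tau^t_y)$. To extend this identity up to $\sigma^s_x(j+1)$, I show by contradiction that $\tau^t_y\geq\sigma^s_x(j+1)$: otherwise Lemma~\ref{lem:hitting_B} places $(\tau^t_y,\theta_{t,\tau^t_y}(y))=(\tau^t_y,\theta_{\sigma^s_x(j),\tau^t_y}(z^s_x(j)))$ into $B(\varphi)\subset F$ with $\sigma^s_x(j)<\tau^t_y<\sigma^s_x(j+1)$, contradicting the infimum defining $\sigma^s_x(j+1)$. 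Continuity of $\theta$ and closedness of $F$ then give $\sigma^t_y(1)=\sigma^s_x(j+1)$ and $z^t_y(1)=z^s_x(j+1)$; the boundary case $t=\sigma^s_x(j)$ is immediate because $y=z^s_x(j)$, and induction propagates the identification through all subsequent bifurcations up to level $k^s_x$.

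The remaining case $t\geq\sigma^s_x(k^s_x)$ is where the second hypothesis enters. Setting $(\tilde t,\tilde y):=(\sigma^s_x(k^s_x),z^s_x(k^s_x))$, the identity $\sigma^{\tilde t}_{\tilde y}(1)=\sigma^s_x(k^s_x+1)=\tilde t$ forces $k^{\tilde t}_{\tilde y}=0$. For $t>\tilde t$ the hypothesis then supplies $n\in I^t$ with $\theta_{\tilde t,\cdot}(\tilde y)[t,\infty)=\varphi_n[t,\infty)$; hence $y=\varphi_n(t)$, and the already-established property (ii) applied at $(t,y)$ delivers $\psi_{t,r}(y)=\varphi_n(r)=\theta_{\tilde t,r}(\tilde y)=\psi_{s,r}(x)$ for $r\geq t$, while $k^t_y<\infty$ is guaranteed by the first hypothesis. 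The boundary $t=\tilde t$ is again immediate.

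The main obstacle is the first case: one must rigorously show that the restriction $\theta_{\sigma^s_x(j),\cdot}(z^s_x(j))[t,\cdot]$ truly belongs to $\mathcal{K}^{t,r}_y$, which requires carefully exploiting the relative compactness axiom (Sk~\ref{item:rel_comp}) to extract approximating skeleton trajectories that pass through a neighborhood of $y$ at the intermediate time $t$. Once this local comparison is firmly in hand together with the contradiction argument using Lemma~\ref{lem:hitting_B}, the inductive identification $(\sigma^t_y(k),z^t_y(k))=(\sigma^s_x(j+k),z^s_x(j+k))$ for $k\geq 1$ and the resulting equality $\psi_{t,r}(y)=\psi_{s,r}(x)$ for $r\geq t$ follow by a uniform application of the piecewise definition of $\psi$.
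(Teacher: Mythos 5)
Your proof is correct and follows essentially the same route as the paper's: the paper organizes the argument as an induction on $d=k^s_x$ (dispatching the case $t\geq\sigma^s_x(1)$ by applying the inductive hypothesis at $(\sigma^s_x(1),z^s_x(1))$, which has one fewer bifurcation), whereas you identify the tail $(\sigma^t_y(k),z^t_y(k))_{k\geq 1}=(\sigma^s_x(j+k),z^s_x(j+k))_{k\geq 1}$ directly for general $j$; the substantive ingredients — the singleton property of $\mathcal{K}^{t,r}_y$ when $(t,y)\notin F$, Lemma~\ref{lem:hitting_B} to compare $\tau^t_y$ with the next stopping time, and the $k^s_x=0$ hypothesis for the terminal piece — are identical. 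Your handling of the step you flag as the main obstacle (that $\theta_{\sigma^s_x(j),\cdot}(z^s_x(j))[t,r]\in\mathcal{K}^{t,r}_y$, via an approximating subsequence $\varphi_{n_l}$ with $n_l\in I^{\sigma^s_x(j)}\subset I^t$ and $\varphi_{n_l}(t)\to y$) is in fact spelled out more explicitly than in the paper, which asserts the corresponding equality $\theta_{t,\cdot}(y)=\theta_{s,\cdot}(x)$ on $[t,\sigma^t_y(1))$ without detail.
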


\begin{proof}[Proof of Theorem~\ref{thm:sfp_deterministic}]
  The fact that \(\psi\) preserves the skeleton \(\varphi\) follows
  from the fact that \(\theta\) preserves the skeleton \(\varphi\). To
  prove the theorem, it suffices to verify that $\psi$ satisfies the
  strong flow property \eqref{item:21}, i.e. that for all \((s,x)\) it
  holds that
  \begin{equation}
    \label{eq:flow-ppty}
    \psi_{t,u}\circ\psi_{s,t}(x)=\psi_{s,u}(x), \quad \forall u\geq
    t\geq s.
  \end{equation}
  For \(d\geq 0\), set $S_d=\{(s,x)\in \mathbb{R}\times M:
  k^s_x=d\}$. We claim that for all $d\geq 0$ it holds that
  \eqref{eq:flow-ppty} is satisfied for all $(s,x)\in S_d$. We prove
  this claim by induction on $d$.
  
  Assume that $(s,x)\in S_0$. Then
  $\psi_{s,\cdot}(x)=\theta_{s,\cdot}(x)$ and the assumption of the
  theorem implies that for all $t>s$, $\psi_{s,t}(x)=\varphi_{n}(t)$
  for some $n\in I^t$. This implies that for all $u>t$
  $$
  \psi_{s,u}(x)=\varphi_n(u)=\psi_{t,u}(\varphi_n(t))=\psi_{t,u}(y).
  $$

  Let \(d\geq 1\) and suppose that \eqref{eq:flow-ppty} is satisfied
  for \((s,x)\in\cup_{k=0}^{d-1}S_{k}\). Let $(s,x)\in S_d$. Let
  \(t>s\) and set \(y=\psi_{s,t}(x)\). If $t<\sigma^s_x(1)$
  then $(t,y)\not\in F$. Hence,
  $t=\sigma^t_y(0)<\sigma^t_y(1)\leq \tau^t_y$ and
  $$
  \psi_{t,\cdot}(y)[t,\sigma^t_y(1))
  =\theta_{t,\cdot}(y)[t,\sigma^t_y(1))
  =\theta_{s,\cdot}(x)[t,\sigma^t_y(1)).
  $$ 
  This implies that $\sigma^s_x(1)=\sigma^t_y(1)$ and
  $\psi_{s,\cdot}(x)[t,\sigma^s_x(1))=\psi_{t,\cdot}(y)[t,\sigma^s_x(1))$.
  By construction, we obtain that
  $(\sigma^s_x(j),z^s_x(j))=(\sigma^t_y(j),z^t_y(j))$ for all
  $j\geq 1$. The definition of $\psi$ then implies that
  $\psi_{s,u}(x)=\psi_{t,u}(y)$ for all $u\geq \sigma^s_x(1)$.

  If $t\geq \sigma^s_x(1)$. Denote $r=\sigma^s_x(1)$ and
  $w=z^s_x(1)$. Then $(r,w)\in S_{d-1}$ and
  $\psi_{s,\cdot}(x)[r,\infty)=\psi_{r,\cdot}(w)$. Since~\eqref{eq:flow-ppty}
  is satisfied by \((r,w)\), we obtain that
  $$
  \psi_{s,u}(x)=\psi_{r,u}(w)=\psi_{t,u}(\psi_{r,t}(w))=\psi_{t,y}(y).
  $$
  We have thus proved that~\eqref{eq:flow-ppty} is satisfied for all
  \((s,x)\in\cup_{d=0}^{\infty}S_{d}=\mathbb{R}\times M\),
  i.e.~\eqref{item:21} is satisfied.   
\end{proof}

\section{Strong measurable continuous modifications of stochastic
  flows}
\label{sec:strong-flow-meas}

Let $\psi^0$ be a stochastic flows of measurable mappings in $M$
associated to a consistent sequence
$(\mathsf{P}^{(n)}_{\bullet}:n\in \mathbb{N})$ of coalescing Feller transition
functions on $M$. We assume that the sequence
$(\mathsf{P}^{(n)}_{\bullet}:n\in \mathbb{N})$ satisfies properties
(TF~\ref{item:consistency}), (TF~\ref{item:coalescing_property}),
(TF~\ref{item:continuity_of_trajectories_TF3}) and (TF~\ref{item:14})
(the condition (TF~\ref{item:14}) is introduced in Section
\ref{sec:2.3.3}). We define a random skeleton $\Phi$ and using the
mapping \(\Theta\) from Section~\ref{subsection:skeleton_extension},
we construct a measurable continuous modification $\theta$ of
\(\psi^{0}\). In Section \ref{sec:2.3.3} we state sufficient
conditions under which the construction of
Section~\ref{subsection:bifurcation_points} is a.s. applicable to
$\theta$ and produces a strong measurable continuous modification of
$\psi^0$.

\subsection{A measurable modification of a stochastic flow}
\label{sec:measurable_modification_out_of_skeleton}

Let $(\mathsf{P}^{(n)}_{\bullet}:n\in \mathbb{N})$ be a consistent sequence of
coalescing Feller transition functions on $M$ that satisfies
(TF~\ref{item:consistency}), (TF~\ref{item:coalescing_property}) and
(TF~\ref{item:continuity_of_trajectories_TF3}), and let $\psi^0$ be a
corresponding measurable stochastic flow of mappings in \(M\). For
\(n\in \mathbb{N}\) and \(x\in{M^{n}}\), we will denote by
\(\mathbb{P}^{(n)}_{x}\) the distribution on $C([0,\infty):M^n)$ under
which the canonical process \(X^{(n)}=(X^1,\ldots,X^n)\) is a Markov
process with transition function $\mathsf{P}^{(n)}$ and starting point
$x$. When \(n=1\) and \(n=2\), we will denote \(X^{(1)}\) and
\(X^{(2)}\) by \(X\) and \((X,Y)\), respectively.

\subsubsection{A first lemma}
Before formulating condition (TF~\ref{item:14}), we deduce some
consequences of (TF~\ref{item:continuity_of_trajectories_TF3}) that
will be used later.

\begin{lemma}
  \label{lem:stronger_TF3} For any compact $K\subset M$ and any $r>0$,
  as \(t\to\infty\)
  \[
    \sup_{x\in{K}}\mathbb{P}^{(1)}_{x}\left[\sup_{s\in[0,t]}\rho(X_{s},x)>r\right]=o(t).
  \]
\end{lemma}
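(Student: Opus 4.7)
The plan is to convert the pointwise-in-time hypothesis (TF~\ref{item:continuity_of_trajectories_TF3}) into a uniform-in-time maximal estimate via a classical first-exit-time bootstrap; the relevant regime is \(t\downarrow 0\), which is the only scale on which (TF~\ref{item:continuity_of_trajectories_TF3}) carries information. Fix a compact \(K\) and \(r>0\). For \(x\in K\), let \(\tau=\tau^{x}_{r}:=\inf\{s\geq 0:\rho(X_{s},x)>r\}\) under \(\mathbb{P}^{(1)}_{x}\); the quantity to bound is \(A(t):=\sup_{x\in K}\mathbb{P}^{(1)}_{x}[\tau\leq t]\). Working with a continuous version of \(X\), one has \(\rho(X_{\tau},x)=r\) on \(\{\tau<\infty\}\).

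The next step is the standard splitting
\[
\{\tau\leq t\}\ \subset\ \{\rho(X_{t},x)>r/2\}\ \cup\ \{\tau\leq t,\ \rho(X_{t},X_{\tau})>r/2\}.
\]
The first event has probability \(o(t)\) uniformly for \(x\in K\) directly by (TF~\ref{item:continuity_of_trajectories_TF3}) applied to \(K\) with radius \(r/2\). For the second, I would apply the strong Markov property at \(\tau\) to obtain
\[
\mathbb{P}^{(1)}_{x}[\tau\leq t,\rho(X_{t},X_{\tau})>r/2]\ \leq\ h(t)\cdot \mathbb{P}^{(1)}_{x}[\tau\leq t],
\]
where \(h(t):=\sup_{u\in[0,t]}\sup_{y\in K'}\mathsf{P}^{(1)}_{u}(y,\{z:\rho(y,z)>r/2\})\) and \(K':=\overline{B}(K,r)\). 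Since bounded subsets of \((M,\rho)\) are relatively compact by the paper's standing assumption, \(K'\) is compact, and continuity of \(X\) together with \(\tau\leq t\) forces \(X_{\tau}\in K'\), so the strong Markov step is legitimate.

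A second use of (TF~\ref{item:continuity_of_trajectories_TF3}), now on \(K'\) with radius \(r/2\), yields \(h(t)\to 0\) as \(t\downarrow 0\); in particular \(h(t)\leq 1/2\) for all sufficiently small \(t\). Rearranging the inequality \(A(t)\leq \sup_{x\in K}\mathsf{P}^{(1)}_{t}(x,\{z:\rho(x,z)>r/2\})+h(t)A(t)\) then gives \(A(t)\leq 2\sup_{x\in K}\mathsf{P}^{(1)}_{t}(x,\{z:\rho(x,z)>r/2\})=o(t)\), which is the desired estimate. The only real obstacle is the bookkeeping around the enlargement \(K'\): one must verify that \(X_{\tau}\) lies in a fixed compact set against which the uniform form of (TF~\ref{item:continuity_of_trajectories_TF3}) can be invoked; local compactness and the standing assumption on \(\rho\) make this automatic, after which the argument is purely algebraic.
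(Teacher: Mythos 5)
Your proof is correct and takes essentially the same route as the paper: the same decomposition of the exit event $\{\tau\leq t\}$ according to whether $\rho(X_{t},x)$ exceeds $r/2$, followed by the strong Markov property at $\tau$, using that $X_{\tau}$ lies in the compact $r$-enlargement of $K$ where (TF~3) applies uniformly. The only cosmetic difference is that the paper bounds the conditional probability after $\tau$ directly by $\alpha t$ (yielding $2\alpha t$ with $\alpha$ arbitrary), whereas you bound it by $h(t)$ and absorb the resulting term $h(t)A(t)$; both close the argument.
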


\begin{proof}

  The proof follows the one given in \cite{zbMATH06929104} for the
  case $M=\mathbb{R}$. Let us fix a compact \(K\subset{M}\) and
  \(r>0\). By (TF~\ref{item:continuity_of_trajectories_TF3}) for any
  $\alpha>0$ there exists $\delta>0$ such that for all
  \(t\in[0,\delta]\) and \(x\in{M}\) with \(\rho(x,K)\leq{r}\), we
  have
  \begin{equation*}
    \mathbb{P}^{(1)}_{x}\left[\rho(X_{t},x)>\frac{r}{2}\right]\leq \alpha t.
  \end{equation*}
  Then, for any $x\in K$ we introduce the stopping time
  $\tau=\inf\{t>0:\rho(X_{t},x)\geq{r}\}$ and estimate
  \begin{align*}
    \mathbb{P}^{(1)}_{x}\left[\sup_{s\in[0,t]}\rho(X_{s},x)\geq{r}\right]
    &= \mathbb{P}^{(1)}_{x}\left[\{\tau\leq{t}\} \cap 
      \left\{\rho(X_{t},x)\geq\frac{r}{2}\right\}\right]\\
    &+ \mathbb{P}^{(1)}_{x}\left[\{\tau\leq{t}\} \cap 
      \left\{\rho(X_{t},x)<\frac{r}{2}\right\}\right]\\
    &\leq
      \mathbb{P}^{(1)}_{x}\left[\rho(X_{t},x)\geq\frac{r}{2}\right]
      + \mathbb{E}^{(1)}_{x}
      \left[1_{\{\tau\leq{t}\}}\mathbb{P}^{(1)}_{x}
      \left[
      \left.\rho(X_{t},X_{\tau})\geq\frac{r}{2}\right|\mathcal{F}_{\tau}\right]
      \right]\\
    &\leq 2\alpha{t},
  \end{align*}
  where we have used the strong Markov property at time \(\tau\) and
  the fact that \(\rho(X_{\tau},K)\leq{r}\) when \(\tau<\infty\).
\end{proof}

\subsubsection{The random sequence \(\varPhi\)}
\label{sec:rand-sequ-varphi}

Let \(\mathcal{D}=\{(s_{n},x_{n}):n\in \mathbb{N}\}\) be a dense
countable set in $\mathbb{R}\times M$. Let \(\varPhi_{n}\) be a
continuous modification of $\psi^0_{s_n,\cdot}(x_n)$, such that
$\varPhi_n(s_n)=x_n$. The sequence
\(\varPhi:=(\varPhi_{n}:n\in\mathbb{N})\) is a random element in
\(X^{\mathbb{N}}\). Its distribution will be denoted
\(\mathbb{P}^{\infty}_{\mathcal{D}}\).

We remark that, with probability 1, the sequence \(\varPhi\)
satisfies condition (Sk~\ref{item:coalescence}) (by
(TF~\ref{item:consistency})). Condition (Sk~\ref{item:denseness}) is
satisfied by construction. Using
(TF~\ref{item:continuity_of_trajectories_TF3}), we can prove the
following lemma.
   
\begin{lemma}
  \label{lem:denseness}
  Almost surely, for all \(t\in\mathbb{R}\) the set
  \(\{\varPhi_{n}(t):s_{n}<t\}\) is dense in \(M\).
\end{lemma}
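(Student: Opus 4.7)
The plan is to reduce the uniform-in-\(t\) denseness to a countable family of fixed-time statements by exploiting the continuity of the paths \(\varPhi_n\). Fix a countable dense set \(M_0\subset M\), and for \(x\in M_0\) and \(k\in\mathbb{N}\) set
\[
  T_{x,k}(\omega)=\{t\in\mathbb{R}:\text{ for every } n\in\mathbb{N},\ s_n<t \Rightarrow \rho(\varPhi_n(t,\omega),x)\geq 1/k\}.
\]
By the triangle inequality the complement of the desired event is contained in \(\bigcup_{x\in M_0,\,k\in\mathbb{N}}\{T_{x,k}\neq\emptyset\}\), so by countable subadditivity it suffices to prove \(\mathbb{P}[T_{x,k}\neq\emptyset]=0\) for each fixed pair \((x,k)\).

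The key observation is that \(T_{x,k}\) is an \emph{open} subset of \(\mathbb{R}\): if \(t_0\notin T_{x,k}\), there is some \(n\) with \(s_n<t_0\) and \(\rho(\varPhi_n(t_0),x)<1/k\), and by continuity of \(\varPhi_n\) both inequalities persist in a neighborhood of \(t_0\). Consequently \(T_{x,k}\) is nonempty if and only if it meets \(\mathbb{Q}\), so by countable subadditivity again it further suffices to show \(\mathbb{P}[q\in T_{x,k}]=0\) for every rational \(q\) (and every \(x\in M_0\), \(k\in\mathbb{N}\)).

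For fixed \(q\), \(x\), and \(\varepsilon=1/k\), the reasoning mirrors that of Lemma~\ref{lem:past_dense}. Given \(\eta>0\), apply (TF~\ref{item:continuity_of_trajectories_TF3}) to the compact set \(\overline{B}(x,\varepsilon/2)\) with radius \(\varepsilon/2\) to obtain \(\delta>0\) such that \(\mathsf{P}^{(1)}_r(y,\{z:\rho(y,z)>\varepsilon/2\})\leq\eta\) whenever \(r\in(0,\delta]\) and \(y\in\overline{B}(x,\varepsilon/2)\). By the density of \(\mathcal{D}\), choose a \emph{single} index \(n\) with \((s_n,x_n)\in(q-\delta,q)\times B(x,\varepsilon/2)\). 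Since \(\varPhi_n(q)\) has law \(\mathsf{P}^{(1)}_{q-s_n}(x_n,\cdot)\),
\[
  \mathbb{P}[\rho(\varPhi_n(q),x)\geq\varepsilon]\leq \mathbb{P}[\rho(\varPhi_n(q),x_n)\geq\varepsilon/2]\leq\eta,
\]
so \(\mathbb{P}[q\in T_{x,k}]\leq\eta\). Letting \(\eta\downarrow 0\) yields \(\mathbb{P}[q\in T_{x,k}]=0\), and combining the three reduction steps gives the lemma.

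The only delicate point is the upgrade from denseness at a fixed \(t\) to simultaneous denseness at every \(t\); the openness of \(T_{x,k}\) disposes of this by reducing the uncountable ``for all \(t\in\mathbb{R}\)'' to the countable ``for all \(q\in\mathbb{Q}\)''. Everything else is a direct consequence of (TF~\ref{item:continuity_of_trajectories_TF3}) and the density of \(\mathcal{D}\) in \(\mathbb{R}\times M\).
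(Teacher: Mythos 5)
There is a genuine gap at the crucial step. Your observation that, for $t_{0}\notin T_{x,k}$, ``both inequalities persist in a neighborhood of $t_{0}$'' shows that the \emph{complement} of $T_{x,k}$ is open, i.e. that $T_{x,k}$ is \emph{closed}, not open: indeed $T_{x,k}=\bigcap_{n}\bigl((-\infty,s_{n}]\cup\{t\geq s_{n}:\rho(\varPhi_{n}(t),x)\geq 1/k\}\bigr)$ is an intersection of closed sets. A closed subset of $\mathbb{R}$ can be nonempty without meeting $\mathbb{Q}$ (it can be a single irrational point), so the reduction from ``for all $t\in\mathbb{R}$'' to ``for all $q\in\mathbb{Q}$'' collapses. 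This is exactly the delicate point you flag in your last paragraph, and it is the entire content of the lemma: the fixed-time statement you do prove is essentially Lemma~\ref{lem:past_dense} and is the easy part. (A secondary issue: without openness, $\{T_{x,k}\neq\emptyset\}$ is an uncountable union and its measurability is not clear either; the paper works with the outer measure $\mathbb{P}^{*}$ for this reason.)

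The paper closes the gap quantitatively rather than topologically. It first upgrades (TF~\ref{item:continuity_of_trajectories_TF3}) to Lemma~\ref{lem:stronger_TF3}, a uniform bound $\sup_{x\in K}\mathbb{P}^{(1)}_{x}\bigl[\sup_{s\in[0,t]}\rho(X_{s},x)>r\bigr]=o(t)$ on the \emph{running supremum} (proved via the strong Markov property), and then discretizes time: if denseness fails at some $t\in[a,b]$ near $x$ at scale $r$, then for every $\delta>0$ every trajectory started in the grid interval $((j-1)\delta,j\delta]$ containing $t-\delta$ and within distance $r$ of $x$ must oscillate by more than $r$ within time $2\delta$. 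A union bound over the $O((b-a)/\delta)$ grid intervals gives a total probability $O(\delta^{-1})\cdot o(\delta)\to 0$. Your argument would be repaired by the same device; the estimate you establish is not strong enough because it controls $\varPhi_{n}$ only at the single deterministic time $q$, whereas one needs control uniformly over a short time interval to cover all (possibly irrational) times simultaneously.
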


\begin{proof}
  In this proof, we let \(\{y_{i}:i\in\mathbb{N}\}\) be a dense
  sequence in \(M\).

  For \(a<b\), let
  \(\Omega_{[a,b]}=\{\omega\in \Omega: \ \exists t\in [a,b] \
  \{\varPhi_n(\omega,t):s_n<t\} \mbox{ is not dense in } M\}.\) We
  will prove that the set $\Omega_{[a,b]}$ is negligible,
  i.e. $\mathbb{P}^*\left[\Omega_{[a,b]}\right]=0,$ where
  $\mathbb{P}^*$ is an outer measure that corresponds to $\mathbb{P}.$
  On \(\Omega_{[a,b]}\), there are \(t\in[a,b]\),
  \(y\in\{y_{i}:i\in\mathbb{N}\}\) and \(r\in\mathbb{Q}_{+}^{*}\) such
  that \(\{\varPhi_{n}(t):s_{n}<t\}\cap{B(y,2r)}=\emptyset\). Then for
  all \(\delta>0\) and all \(n\in\mathbb{N}\) such that
  \(t-2\delta<s_{n}<t\) and such that \(\rho(x_{n},y)<r\) we have that
  \(\rho(\varPhi_{n}(t),x_{n})>r\) and as a consequence that
  \(\sup_{s\in[0,2\delta]}\rho(\varPhi_{n}(s_n+s),x_{n})>r\). This
  implies that on \(\Omega_{[a,b]}\) there are
  \(y\in\{y_{i}:i\in\mathbb{N}\}\) and \(r\in\mathbb{Q}_{+}^{*}\) such
  that for all \(\delta>0\), there is
  \(j\in\mathbb{Z}\cap[\delta^{-1}{a}-1,\delta^{-1}{b})\) such that
  for all \(n\in\mathbb{N}\),
  \begin{equation}
    \label{eq:9}
    s_{n}\in ((j-1)\delta,j\delta]\quad\mathrm{and}\quad\rho(x_{n},y)<r
    \quad\Longrightarrow\quad
    \sup_{s\in[0,2\delta]}\rho(\varPhi_{n}(s_{n}+s),x_{n})>r.
  \end{equation}
  For \(\delta>0\), \(y\in\{y_{i}:i\in\mathbb{N}\}\),
  \(r\in\mathbb{Q}_{+}^{*}\) and
  \(j\in\mathbb{Z}\cap[\delta^{-1}{a}-1,\delta^{-1}{b}]\), denote by
  \(\Omega^{\delta,j}_{y,r}\) the event ``\eqref{eq:9} is satisfied
  for all \(n\in\mathbb{N}\)''. Then
  \begin{equation}
    \Omega_{[a,b]}\subset \bigcup_{y,r}\bigcap_{\delta>0}\bigcup_j \Omega^{\delta,j}_{y,r}.
  \end{equation}
  Since \(\{(s_{n},x_{n})_{n\in\mathbb{N}}\}\) is dense in
  \(\mathbb{R}\times{M}\), there is an \(n\) such that
  \(s_{n}\in ((j-1)\delta,j\delta]\) {and} \(\rho(x_{n},y)<r\), and so
  \begin{align*}
    \mathbb{P}[\Omega^{\delta,j}_{y,r}]
    &\leq
      \mathbb{P}\left[\sup_{s\in[0,2\delta]}\rho(\varPhi_{n}(s_{n}+s),x_{n})>r\right]
      =\mathbb{P}^{(1)}_{x_{n}}\left[\sup_{s\in[0,2\delta]}\rho(X_{s},x_{n})>r\right]
    \\
    &\leq
      \sup_{x\in{B(y,r)}}\mathbb{P}^{(1)}_{x}\left[\sup_{s\in[0,2\delta]}\rho(X_{s},x)>r\right].
  \end{align*}
  Therefore
   
  $$
  \mathbb{P}\left[\bigcup_j \Omega^{\delta,j}_{y,r}\right] \leq
  \left(\delta^{-1}b-\delta^{-1}a+2\right)\sup_{x\in
    B(y,r)}\mathbb{P}^{(1)}_x\left[\sup_{s\in [0,2\delta]}
    \rho(X_s,x_n)>r\right],
  $$
  and
  \begin{equation}
    \mathbb{P}^*\left[\Omega_{[a,b]}\right]
    \leq \sum_{y,r}
    \lim_{\delta\downarrow{0}} \left(b-a+2\delta\right)\delta^{-1}\sup_{x\in B(y,r)}\mathbb{P}^{(1)}_x\left[\sup_{s\in [0,2\delta]} \rho(X_s,x_n)>r\right].
  \end{equation}
  Applying Lemma \ref{lem:stronger_TF3}, we can conclude that
  \(\mathbb{P}^*\left[\Omega_{[a,b]}\right]=0\). This proves the
  lemma.
\end{proof}

\subsubsection{Assumption (TF~\ref{item:14})}\label{sec:2.3.3}
The next assumption is formulated in terms of
$\mathbb{P}^\infty_{\mathcal{D}}$\,:
\begin{enumerate}[(TF~1)]
  \setcounter{enumi}{3}
\item\label{item:14}
  $\mathbb{P}^\infty_{\mathcal{D}}(\mathcal{S}(M))=1.$
\end{enumerate}
Another formulation of (TF~\ref{item:14}) is to say that if a random
variable \(\varPhi\) in $X^\mathbb{N}$ is distributed as
\(\mathbb{P}^{\infty}_{\mathcal{D}}\), then \(\varPhi\) is a skeleton
a.s.

Condition (TF~\ref{item:14}) is equivalent to the following condition:
\begin{enumerate}[(TF~1')]
  \setcounter{enumi}{3}
\item\label{tf_item:2} For any compact subset
  \(L\subset\mathbb{R}\times{M}\), the set
  \(\{\varPhi_{n}[s,\infty):\ n\in I^s, (s,\varPhi_n(s))\in L\}\) is
  a.s.  relatively compact in \(X\).
\end{enumerate}
\begin{remark}
  \label{rem:independence of assumptions} The authors don't know
  whether condition (TF~\ref{item:14}) follows from conditions
  (TF~\ref{item:consistency}), (TF~\ref{item:coalescing_property}) and
  (TF~\ref{item:continuity_of_trajectories_TF3}). In Theorem
  \ref{thm:inst_coalescence_metric_graph} we prove this fact for
  instantaneously coalescing stochastic flows on metric graphs.
\end{remark}

\subsubsection{A sequence of measurable mappings
  \((\varepsilon_{k}: k\in\mathbb{N})\)}
\label{sec:sequ-meas-mapp} We recall that \((X,Y)\) denotes the
canonical process on \(C([0,\infty):M^{2})\), so that under
\(\mathbb{P}^{(2)}_{(x,y)}\) the process \((X,Y)\) is a Markov process
with transition function $\mathsf{P}^{(2)}_{\bullet}$ and starting point
$(x,y).$ By $d_0$ we denote the restriction of the metric $d$ from $X$
to $C([0,\infty):M),$ and by $d_s$ we denote its shift:
$ d_s(f,g)=d_0(f(s+\cdot),g(s+\cdot)),$ $(f,g)\in C([s,\infty):M)^2.$

\begin{lemma}
  \label{lem:two_point_continuity}
  For any $\varepsilon>0$, the function
  $(x,y)\mapsto \mathbb{P}^{(2)}_{(x,y)}[d_0(X,Y)\geq{}\varepsilon]$
  is upper-semicontinuous on $M^2$. In particular,
  \begin{equation}
    \lim_{y\to x}\mathbb{P}^{(2)}_{(x,y)}[d_0(X,Y)\geq{}\varepsilon]=0.
  \end{equation}
\end{lemma}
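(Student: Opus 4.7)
The plan is to establish upper semicontinuity by combining the Feller property of $\mathsf{P}^{(2)}$ with the portmanteau theorem on the path space $C([0,\infty):M^{2})$, and then to derive the ``in particular'' claim from the coalescing property (TF~\ref{item:coalescing_property}) on the diagonal.

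First I would show that $(x,y)\mapsto\mathbb{P}^{(2)}_{(x,y)}$ is continuous from $M^{2}$ into the space of probability measures on $C([0,\infty):M^{2})$ equipped with the weak topology (the path space being endowed with the topology of uniform convergence on compact subsets). The Feller property of $\mathsf{P}^{(2)}$ yields convergence of all finite-dimensional distributions when $(x_{n},y_{n})\to(x,y)$. To upgrade this to weak convergence on path space one needs tightness of the family $\{\mathbb{P}^{(2)}_{(x_{n},y_{n})}:n\in\mathbb{N}\}$ in $C([0,T]:M^{2})$ for each $T>0$; this follows from a uniform modulus-of-continuity control that is a direct consequence of Lemma~\ref{lem:stronger_TF3} applied to each coordinate process (which are one-point motions with transition function $\mathsf{P}^{(1)}$ by (TF~\ref{item:consistency})), combined with the standard tightness criterion for continuous paths.

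Next, since $d_{0}$ is continuous on $C([0,\infty):M)\times C([0,\infty):M)$ with respect to uniform convergence on compact subsets, the set $A_{\varepsilon}=\{(f,g):d_{0}(f,g)\geq\varepsilon\}$ is closed, hence (after the canonical identification of $C([0,\infty):M^{2})$ with $C([0,\infty):M)^{2}$) corresponds to a closed subset of the path space. The portmanteau theorem then yields upper semicontinuity of $(x,y)\mapsto\mathbb{P}^{(2)}_{(x,y)}[(X,Y)\in A_{\varepsilon}]$, which is the first assertion. For the second assertion, (TF~\ref{item:coalescing_property}) gives $\mathsf{P}^{(2)}_{t}((x,x),\Delta)=1$ for every $t\geq 0$, so taking a countable dense set of times and invoking continuity of paths yields $X_{t}=Y_{t}$ for all $t\geq 0$ almost surely under $\mathbb{P}^{(2)}_{(x,x)}$, hence $d_{0}(X,Y)=0$ a.s. Upper semicontinuity at $(x,x)$ then forces $\limsup_{y\to x}\mathbb{P}^{(2)}_{(x,y)}[d_{0}(X,Y)\geq\varepsilon]\leq 0$.

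The main obstacle I anticipate is making the tightness estimate rigorous: the bare Feller property on $C_{0}(M^{2})$ only delivers continuous dependence of the semigroup, not of the full path law. If the authors are willing to invoke ``Feller process with continuous paths has continuous-in-initial-condition path law'' as a standard fact, the argument is immediate; otherwise the modulus-of-continuity bound coming from Lemma~\ref{lem:stronger_TF3} needs to be spelled out and fed into a tightness criterion (such as Kolmogorov's or Aldous's) to justify the weak convergence step.
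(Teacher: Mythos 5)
Your proposal is correct and follows essentially the same route as the paper: weak convergence of the path laws as the starting point varies, the portmanteau theorem applied to the closed set \(\{d_0(X,Y)\geq\varepsilon\}\), and the coalescing property (TF~2) together with path continuity to handle the diagonal. The tightness step you flag as the main obstacle is exactly what the paper outsources to a citation (Ethier--Kurtz, Chapter~4, Theorem~2.5), which gives weak convergence of the path laws of a Feller process when the initial conditions converge, so no hand-built modulus-of-continuity argument via Lemma~\ref{lem:stronger_TF3} is needed.
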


\begin{proof} Let $(x_n,y_n)\to (x,y)$ in $M^2$. By \cite[Chapter 4,
  Theorem 2.5]{zbMATH02237386} we have the weak convergence
  $\mathbb{P}^{(2)}_{(x_n,y_n)}\to \mathbb{P}^{(2)}_{(x,y)}$ in the
  space $C([0,\infty):M^2)$. Hence,
  $$
  \limsup_{n\to\infty}\mathbb{P}^{(2)}_{(x_n,y_n)}[d_0(X,Y)\geq{}\varepsilon]\;\leq\;
  \mathbb{P}^{(2)}_{(x,y)}[d_0(X,Y)\geq{}\varepsilon].
  $$
  The second statement follows from the relation
  $\mathbb{P}^{(2)}_{(x,x)}[d_0(X,Y)\geq{}\varepsilon]=0$.
\end{proof}

We define for every \(k\in \mathbb{N}\) a mapping \(\varepsilon_{k}\) such
that for all \(x\in{M}\)
\begin{equation}
  \label{eq:choice_of_epsilon}
  \varepsilon_k(x)=
  2^{-k}\wedge  \inf
  \left\{r>0:
    \sup_{y\in \overline{B}(x,r)}\mathbb{P}^{(2)}_{(x,y)}[d_0(X,Y)\geq{}2^{-k}]>2^{-k}
  \right\}.
\end{equation}

\begin{lemma}\label{lem:epsilon_measurability}
  The sequence \((\varepsilon_{k}: k\in \mathbb{N})\) is a non-increasing
  sequence of positive measurable functions, that converges uniformly
  towards \(0\) as \(k\to\infty\).
\end{lemma}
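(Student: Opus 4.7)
The plan is to verify the four assertions of the lemma one by one—positivity, uniform convergence to $0$, the non-increasing property, and measurability—with measurability as the main technical step.

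First, the uniform upper bound $\varepsilon_k(x) \leq 2^{-k}$ is built into the definition, so $\sup_{x \in M} \varepsilon_k(x) \leq 2^{-k} \to 0$ gives uniform convergence to zero. For positivity I would apply Lemma~\ref{lem:two_point_continuity}: since $y \mapsto \mathbb{P}^{(2)}_{(x,y)}[d_0(X,Y) \geq 2^{-k}]$ vanishes as $y \to x$, one finds $\delta > 0$ such that this quantity is below $2^{-k}/2$ on $\overline{B}(x,\delta)$, so the set appearing inside the infimum in the definition of $\varepsilon_k(x)$ is contained in $(\delta, \infty)$, hence $\varepsilon_k(x) \geq \delta \wedge 2^{-k} > 0$. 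For the monotonicity, note that any $r$ in the set defining $\varepsilon_k(x)$ also belongs to the set defining $\varepsilon_{k+1}(x)$: the inclusion $\{d_0(X,Y) \geq 2^{-k}\} \subset \{d_0(X,Y) \geq 2^{-(k+1)}\}$ makes the supremum at level $2^{-(k+1)}$ at least as large as at level $2^{-k}$, hence strictly greater than $2^{-(k+1)}$. Combined with $2^{-(k+1)} \leq 2^{-k}$ in the $\wedge$-clause, this yields $\varepsilon_{k+1}(x) \leq \varepsilon_k(x)$.

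The measurability step is the heart of the proof. I set $F(x,y) := \mathbb{P}^{(2)}_{(x,y)}[d_0(X,Y) \geq 2^{-k}]$ and $h_k(x,r) := \sup_{y \in \overline{B}(x,r)} F(x,y)$, and show first that $h_k$ is upper semicontinuous jointly on $M \times (0,\infty)$. Given $(x_n, r_n) \to (x, r)$, the supremum defining $h_k(x_n, r_n)$ is attained—closed balls in $M$ are compact by our standing hypothesis on $\rho$, and $F(x_n, \cdot)$ is USC—at some $y_n$; the sequence $(y_n)$ stays in a bounded set, so a subsequence converges to some $y \in \overline{B}(x, r)$, and joint USC of $F$ from Lemma~\ref{lem:two_point_continuity} gives $\limsup F(x_n, y_n) \leq F(x, y) \leq h_k(x, r)$. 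In particular $x \mapsto h_k(x, r)$ is USC for each fixed $r$, so $\{x : h_k(x, r) \leq 2^{-k}\}$ is closed. Using monotonicity of $h_k(x, \cdot)$ in $r$, one has
\begin{equation*}
  \bigl\{x : \inf\{r > 0 : h_k(x, r) > 2^{-k}\} < c\bigr\} \;=\; \bigcup_{r \in \mathbb{Q} \cap (0, c)} \{x : h_k(x, r) > 2^{-k}\},
\end{equation*}
which is Borel. Hence the infimum is a measurable function of $x$, and taking its minimum with the constant $2^{-k}$ gives the measurability of $\varepsilon_k$.

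The main obstacle I anticipate is the joint upper semicontinuity of $h_k$: the domain of the inner supremum moves with $x$, so one needs a clean compactness argument to pass to the limit simultaneously in $x_n \to x$ and the moving maximizers $y_n \in \overline{B}(x_n, r_n)$. The local compactness of $M$—so that closed balls are compact and bounded sequences subconverge—and the joint (not merely separate) upper semicontinuity of $F$ provided by Lemma~\ref{lem:two_point_continuity} are both essential for this step.
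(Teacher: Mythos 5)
Your proof is correct and follows essentially the same route as the paper: positivity from Lemma~\ref{lem:two_point_continuity}, uniform convergence from the cap $\varepsilon_k\leq 2^{-k}$, monotonicity from the inclusion of the defining sets, and measurability via upper semicontinuity of $x\mapsto\sup_{y\in\overline{B}(x,r)}g_k(x,y)$ together with a countable union over rational radii (the paper only needs this USC for each fixed $r$, not jointly in $(x,r)$, but your joint argument is fine and uses the same compactness-of-closed-balls mechanism). One small slip: for an upper semicontinuous function it is the superlevel sets $\{h\geq s\}$ that are closed, not the sublevel sets $\{h\leq 2^{-k}\}$ (consider $h=1_{\{0\}}$); this does not break your argument, since the set you actually need, $\{x:h_k(x,r)>2^{-k}\}=\bigcup_n\{x:h_k(x,r)\geq 2^{-k}+1/n\}$, is an $F_\sigma$ and hence Borel -- or one can, as the paper does, write the union additionally over rational thresholds $s>2^{-k}$ of the closed sets $\{x:g_{r,k}(x)\geq s\}$.
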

\begin{proof}
  The facts that the sequence \((\varepsilon_{k}:k\in \mathbb{N})\) is
  non-increasing and converges to $0$ for every $x\in M$ easily follow
  from its definition.  Lemma~\ref{lem:two_point_continuity} implies
  that \(\varepsilon_{k}(x)>0\) for all \(k\in \mathbb{N}\) and \(x\in{M}\).
  Let us now prove that for every \(k\in \mathbb{N}\), the mapping
  \(\varepsilon_{k}\) is measurable.  For \(r>0\) and \(k\in \mathbb{N}\),
  let \(g_{k}:M^{2}\to[0,1]\) and \(g_{r,k}:M\to[0,1]\) be 
  functions defined by
  \(g_{k}(x,y)=\mathbb{P}^{(2)}_{(x,y)}[d_0(X,Y)\geq{}2^{-k}]\) and
  \(g_{r,k}(x)=\sup_{y\in{}\overline{B}(x,r)}g_{k}(x,y)\).  Then
  \(g_{k}\) is upper-semicontinuous
  (Lemma~\ref{lem:two_point_continuity}) and this implies that
  \(g_{r,k}\) is also upper-semicontinuous.

  For any $a\in (0,2^{-k}]$, the set
  \(\varepsilon_{k}^{-1}((-\infty,a))\) has the following
  decomposition:
  $$
  \{x\in{M}:\,\varepsilon_k(x)<a\}=
  \bigcup_{(r,s)\in\mathbb{Q}^{2}:\;r\in (0,a),\,s>2^{-k}}
  \{x\in{M}:\enspace g_{r,k}(x)\geq{}s\}.
  $$
  Since $g_k$ is upper-semicontinuous, the set
  $\{x\in{M}:\enspace g_{r,k}(x)\geq{}s\}$ is closed, and this proves
  that \(\varepsilon_{k}\) is measurable.
\end{proof}

\subsubsection{The measurable continuous modification \(\theta\)}
\label{sec:meas-modif}

Assume that conditions (TF~\ref{item:consistency}),
(TF~\ref{item:coalescing_property}),
(TF~\ref{item:continuity_of_trajectories_TF3}) and (TF~\ref{item:14})
are satisfied. In particular, the sequence $\varPhi$ is a.s. a
skeleton. We restrict to an event of full measure on which $\varPhi$
is a skeleton. Using the sequence of functions \((\varepsilon_{k}:k\in \mathbb{N})\)
given by \eqref{eq:choice_of_epsilon}, we let 
\(\Theta:\mathbb{R}\times M\times \mathcal{S}(M)\to X\) be the mapping
defined in Section
\ref{subsection:skeleton_extension}. Let us now define
\(\theta=(\theta_{s,\cdot}(x):(s,x)\in\mathbb{R}\times{M})\) the
family of random continous mappings on \(M\) such that for \(s\leq{t}\) and
\((x,\omega)\in{M}\times\Omega\),
\begin{equation}\label{eq:2}
  \theta_{s,t}(\omega,x)=\Theta(s,x,\varPhi(\omega))(t).
\end{equation}

\begin{theorem}
  \label{thm:measurable_modification}
  The family of random mappings \(\theta\) is a measurable continuous 
  modification of \(\psi^{0}\) that preserves the skeleton $\varPhi$.
\end{theorem}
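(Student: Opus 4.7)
The plan is to verify the four required properties in turn: joint measurability of $(s,t,x,\omega)\mapsto\theta_{s,t}(\omega,x)$; continuity of $t\mapsto\theta_{s,t}(x)$ together with $\theta_{s,s}(x)=x$; preservation of the skeleton $\varPhi$; and the modification property $\theta_{s,t}(x)=\psi^{0}_{s,t}(x)$ almost surely. The first three are essentially automatic: since $\varPhi$ is a measurable random element of $X^{\mathbb{N}}$ that lies in $\mathcal{S}(M)$ on a full-measure event, and since $\Theta$ is measurable on $\mathbb{R}\times M\times\mathcal{S}(M)$ by Theorem~\ref{thm:measurable_extension}, formula~\eqref{eq:2} makes $\theta$ jointly measurable; and the second part of Theorem~\ref{thm:measurable_extension} furnishes continuity, the initial condition, and the skeleton preservation $\theta_{s,t}(\varPhi_{n}(s))=\varPhi_{n}(t)$ for $n\in I^{s}$.

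The real work concerns the modification property. The strategy is to show that for each fixed $(s,x)\in\mathbb{R}\times M$, the approximating sequence $\varPhi^{s,x}_{k}=\varPhi_{n^{s,x}_{k}}[s,\infty)$ from \eqref{eq:approximating_sequence} converges almost surely to $\psi^{0}_{s,\cdot}(x)$ in $C([s,\infty):M)$. Because $\Theta(s,x,\varPhi)$ is, by construction, a value of $\ell$ applied to that sequence and $\ell$ returns a limit point, such convergence forces $\theta_{s,\cdot}(x)=\psi^{0}_{s,\cdot}(x)$ almost surely, and evaluating at $t$ yields the desired identity in law.

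To prove convergence, set $Z_{k}:=\varPhi_{n^{s,x}_{k}}(s)$. The index $n^{s,x}_{k}$ is determined by the family $(\varPhi_{n}(s):n\in I^{s})=(\psi^{0}_{s_{n},s}(x_{n}):n\in I^{s})$, hence by $\psi^{0}$ restricted to times $\leq s$, which by (SF~\ref{item:independence}) is independent of the increment $\psi^{0}_{s,s+\cdot}$. The weak flow property (SF~\ref{item:weak_flow}) combined with continuity of the paths $\varPhi_{n}$ moreover yields $\varPhi_{n^{s,x}_{k}}(s+\cdot)=\psi^{0}_{s,s+\cdot}(Z_{k})$ almost surely. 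Conditioning on $Z_{k}$, the joint law of $(\varPhi_{n^{s,x}_{k}}(s+\cdot),\psi^{0}_{s,s+\cdot}(x))$ on $C([0,\infty):M^{2})$ is therefore $\mathbb{P}^{(2)}_{(Z_{k},x)}$. Since by construction $\rho(Z_{k},x)<\varepsilon_{k}(x)$, the inf formulation in \eqref{eq:choice_of_epsilon}, together with the symmetry of $\mathsf{P}^{(2)}$ under the permutation $\pi_{2,1}$ (a consequence of (TF~\ref{item:consistency})), gives
\[
  \mathbb{P}\!\left[d_{s}\!\left(\varPhi_{n^{s,x}_{k}},\psi^{0}_{s,\cdot}(x)\right)\geq 2^{-k}\right]\leq 2^{-k}.
\]
Summing in $k$ and applying the Borel--Cantelli lemma produces $d_{s}(\varPhi_{n^{s,x}_{k}},\psi^{0}_{s,\cdot}(x))\to 0$ almost surely, which is exactly the convergence $\varPhi^{s,x}_{k}\to\psi^{0}_{s,\cdot}(x)$ in $C([s,\infty):M)$.

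I expect the main technical point to be the conditional-law identification for the pair $(\varPhi_{n^{s,x}_{k}}(s+\cdot),\psi^{0}_{s,s+\cdot}(x))$ given $Z_{k}$. Because $n^{s,x}_{k}$ is a random index built from the entire family of pre-$s$ trajectories of $\psi^{0}$, one has to separate carefully the pre-$s$ $\sigma$-algebra (in which $Z_{k}$ is measurable) from the independent block $\psi^{0}_{s,s+\cdot}$ before invoking the consistent two-point transition function. Once this measurability and independence bookkeeping is in place and the flow identity has been applied, the estimate above follows directly from the defining inequality of $\varepsilon_{k}$.
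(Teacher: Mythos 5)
Your proposal is correct and follows essentially the same route as the paper: deduce measurability, continuity, the initial condition and skeleton preservation from Theorem~\ref{thm:measurable_extension}, then obtain the modification property by combining the bound $\mathbb{P}[d_s(\varPhi^{s,x}_k,\psi^{0}_{s,\cdot}(x))\geq 2^{-k}]\leq 2^{-k}$ (from the definition of $\varepsilon_k$ and the fact that the pair is a two-point motion with transition function $\mathsf{P}^{(2)}$) with Borel--Cantelli and the limit-point property of $\ell$. The only difference is that you spell out the conditional-law identification for $(\varPhi_{n^{s,x}_k}(s+\cdot),\psi^{0}_{s,s+\cdot}(x))$ given the pre-$s$ data, which the paper asserts in one line; your bookkeeping is sound.
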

\begin{proof} The facts that
  \((s,t,x,\omega)\mapsto\theta_{s,t}(\omega,x)\) is measurable and
  that $\theta_{s,s}(\omega,x)=x$ follow immediately from Theorem
  \ref{thm:measurable_extension}. Let us now prove that \(\theta\) is
  a modification of \(\psi^{0}\).

  Let us fix \((s,x)\in\mathbb{R}\times{M}\). Taking a continuous
  modification of $\psi^0_{s,\cdot}(x)$ we may assume that
  $\psi^0_{s,\cdot}(x)\in C([s,\infty):M)$.  Recall that
  $\Theta(s,x,\varphi)=\ell((\varphi^{s,x}_k:k\in \mathbb{N}))$, with
  $\varphi^{s,x}_k$ and $n^{s,x}_k$ defined
  by~\eqref{eq:approximating_sequence}. We denote by $\varPhi^{s,x}_k$
  the random variable in $X$ defined
  by~\eqref{eq:approximating_sequence} with \(\varphi\) replaced by
  \(\varPhi\).  It holds that for each \(k\in \mathbb{N}\),
  \(\rho(\varPhi^{s,x}_{k}(s),x)<\varepsilon_{k}(x)\), which implies
  that (see the definition of \(\varepsilon_{k}(x)\) given
  in~\eqref{eq:choice_of_epsilon})
  \begin{equation*}
    \mathbb{P}[d_s(\varPhi^{s,x}_{k},\psi^{0}_{s,\cdot}(x))\geq{}2^{-k}]\leq{}2^{-k},
  \end{equation*}
  since \((\varPhi^{s,x}_{k},\psi^{0}_{s,\cdot}(x))\) is a Markov
  process in \(M^{2}\) with transition function given by
  \(\mathsf{P}^{(2)}\). The Borel-Cantelli lemma shows that
  a.s. \(\varPhi^{s,x}_{k}\to\psi^{0}_{s,\cdot}(x)\) in \(X\). This
  implies that a.s. \(\Theta(s,x,\varPhi)=\psi^{0}_{s,\cdot}\) and so
  that for all \(t\geq{s}\),
  a.s. \(\theta_{s,t}(x)=\psi^{0}_{s,t}(x)\), which proves that
  \(\theta\) is a modification of \(\psi^{0}\). In particular,
  $\theta$ is a stochastic flow of measurable mappings in $M$ (as it
  has been remarked in the Introduction). The fact that \(\theta\)
  preserves the skeleton \(\varPhi\) follows from
  Theorem~\ref{thm:measurable_extension}.
\end{proof}

The measurable continuous modification $\theta$ possesses one useful
feature: it a.s. satisfies the flow property at stopping times.

Let
\(\mathcal{A}^{\theta}=(\mathcal{A}^{\theta}_{t}:t\in\mathbb{R})\)
be the natural filtration associated to \(\theta\), where for
every \(t\in\mathbb{R}\cup\{\infty\}\) the $\sigma$-field
\(\mathcal{A}^{\theta}_{t}\) is defined by
$\mathcal{A}^{\theta}_t=\sigma(\{\theta_{r,s}:-\infty<r\leq
s\leq t\})$.  An $\mathbb{R}\cup \{\infty\}$-valued random variable
$\sigma$ will be called an $\mathcal{A}^{\theta}$-stopping time,
if for all $t\in \mathbb{R},$
$\{\sigma< t\}\in \mathcal{A}^{\theta}_t$ (equivalently, for all
$t\in \mathbb{R}$,
$\{\sigma\leq t\}\in \mathcal{A}^{\theta}_{t+}$). With an
$\mathcal{A}^{\theta}$-stopping time $\sigma$ we associate a
$\sigma$-field
\[
  \mathcal{A}^{\theta}_\sigma=\{A\in
  \mathcal{A}^{\theta}_\infty:\: \forall{}t\in \mathbb{R},\, A\cap
  \{\sigma<t\}\in{}\mathcal{A}^{\theta}_t\}.
\]

\begin{lemma}\label{lem:stopping_time_flow}
  Let $(s,x)\in\mathbb{R}\times{M}$ and let $\sigma$ be an
  $\mathcal{A}^{\theta}$-stopping time. Then
  $$
  \theta_{\sigma,\cdot}(\theta_{s,\sigma}(x))=\theta_{s,\cdot}(x)[\sigma,\infty)
  $$
  a.s. on the set $\{s\leq \sigma<\infty\}$.
\end{lemma}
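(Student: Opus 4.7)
The plan is to establish the identity by successive approximations of the stopping time: first for deterministic times, then for stopping times with countably many values, and finally for general $\mathcal{A}^{\theta}$-stopping times.

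\textbf{Deterministic case.} For fixed $s\leq t$ and $x\in M$, I would first show that almost surely $\theta_{t,u}(\theta_{s,t}(x))=\theta_{s,u}(x)$ for all $u\geq t$. The ingredients are: the flow property (SF~\ref{item:weak_flow}) for $\psi^{0}$, which yields $\psi^{0}_{s,u}(x)=\mathcal{J}_{u-t}(\psi^{0}_{t,u})(\psi^{0}_{s,t}(x))$ a.s.; the modification property $\theta_{r,v}(y)=\psi^{0}_{r,v}(y)$ a.s.\ for every fixed $(r,v,y)$ (in particular $\mathcal{J}_{u-t}(\psi^{0}_{t,u})(y)=\theta_{t,u}(y)$ a.s.); the independence of $\psi^{0}_{s,t}$ and $\psi^{0}_{t,u}$ given by (SF~\ref{item:independence}); and a Fubini argument that lets one substitute the random argument $\psi^{0}_{s,t}(x)$ into the pointwise a.s.\ identity. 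Combining these yields the claimed equality for each fixed $u$, and continuity in $u$ of both sides upgrades this to a.s.\ equality for all $u\geq t$ simultaneously.

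\textbf{Countably-valued case.} If $\sigma$ takes values in a countable set $D\cup\{\infty\}$, I would partition $\{s\leq\sigma<\infty\}=\bigcup_{t\in D,\,t\geq s}\{\sigma=t\}$ and apply the deterministic case on each piece, using a countable union bound.

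\textbf{General case.} For a general $\mathcal{A}^{\theta}$-stopping time $\sigma$, approximate from above by $\sigma_{n}=2^{-n}\lceil 2^{n}\sigma\rceil$ (set to $\infty$ where $\sigma=\infty$); each $\sigma_{n}$ is a countably-valued $\mathcal{A}^{\theta}$-stopping time and $\sigma_{n}\downarrow\sigma$. The countably-valued case gives, a.s.\ on $\{s\leq\sigma<\infty\}$, that $\theta_{\sigma_{n},u}(\theta_{s,\sigma_{n}}(x))=\theta_{s,u}(x)$ for every $u\geq\sigma_{n}$.

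The main obstacle is the limit passage $n\to\infty$: although $\theta_{s,\sigma_{n}}(x)\to\theta_{s,\sigma}(x)$ by path continuity of $\theta_{s,\cdot}(x)$, the map $(\tau,y)\mapsto\theta_{\tau,u}(y)$ is only measurable in $(\tau,y)$, not continuous, so it is not automatic that $\theta_{\sigma_{n},u}(\theta_{s,\sigma_{n}}(x))\to\theta_{\sigma,u}(\theta_{s,\sigma}(x))$. I expect to overcome this via a two-point-motion argument: using the Feller strong Markov property for $\mathsf{P}^{(2)}$ and the independent-increments property (SF~\ref{item:independence}) applied at the stopping time $\sigma$, the joint process $(\theta_{s,\sigma+r}(x),\theta_{\sigma,\sigma+r}(\theta_{s,\sigma}(x)))_{r\geq 0}$ should be identified, conditionally on $\mathcal{A}^{\theta}_{\sigma}$, as a two-point motion started at the diagonal point $(\theta_{s,\sigma}(x),\theta_{s,\sigma}(x))$; the coalescence property (TF~\ref{item:coalescing_property}) then forces it to remain on the diagonal, giving the desired pathwise equality.
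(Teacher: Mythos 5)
Your three-step scaffolding (deterministic times, then countably-valued, then general stopping times via dyadic approximation) is not where the difficulty of this lemma lives, and your own text concedes this: the dyadic approximation buys nothing because the limit passage fails, and the fix you propose for that limit passage is a direct argument at $\sigma$ that would make steps one and two superfluous. The genuine gap is in that fix. You assert that the pair $\bigl(\theta_{s,\sigma+r}(x),\theta_{\sigma,\sigma+r}(\theta_{s,\sigma}(x))\bigr)_{r\geq 0}$ ``should be identified'' as a two-point motion started at a diagonal point, after which (TF~\ref{item:coalescing_property}) finishes the job. But identifying the law of $\theta_{\sigma,\cdot}(\xi)$, $\xi=\theta_{s,\sigma}(x)$, is essentially the statement to be proven: by construction $\theta_{\sigma,\cdot}(\xi)=\Theta(\sigma,\xi,\varPhi)=\ell\bigl((\varPhi^{\sigma,\xi}_k)_k\bigr)$ is the output of the measurable limit-point selection $\ell$ applied to skeleton trajectories restarted near $(\sigma,\xi)$, and a limit point of a non-convergent sequence has no identifiable law. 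The only way to pin it down is to show that the approximating sequence $\varPhi^{\sigma,\xi}_k$ actually converges and to identify its limit --- which is exactly what the paper does, via the strong Markov property of the two-point motions $(\varPhi_m,\theta_{s,\cdot}(x))$ at $\sigma$, the defining estimate \eqref{eq:choice_of_epsilon} for $\varepsilon_k$ (which gives $\mathbb{P}[d_\sigma(\varPhi^{\sigma,\xi}_k,\theta_{s,\cdot}(x)[\sigma,\infty))\geq 2^{-k}]\leq 2^{-k}$), and Borel--Cantelli. Once you have that convergence you conclude $\theta_{\sigma,\cdot}(\xi)=\theta_{s,\cdot}(x)[\sigma,\infty)$ directly, with no need for the detour through a diagonal two-point motion. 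Your proposal names the right tools but defers the entire content of the lemma to an unproven identification.

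There is also a secondary flaw in your deterministic step. The Fubini/independence substitution of $\psi^{0}_{s,t}(x)$ into the a.s.\ identity $\mathcal{J}_{u-t}(\psi^{0}_{t,u})(y)=\theta_{t,u}(y)$ requires the exceptional null set $N(y)$ to be measurable with respect to a $\sigma$-field independent of $\psi^{0}_{s,t}(x)$. It is not: $\theta_{t,u}(y)=\Theta(t,y,\varPhi)(u)$ depends on the positions $(\varPhi_n(t))_{n\in I^{t}}$ of the skeleton at time $t$, which are correlated with $\theta_{s,t}(x)$, so independence of $\psi^{0}_{s,t}$ and $\psi^{0}_{t,u}$ does not license the substitution. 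The correct argument even for a deterministic time $t$ is again the convergence of $\varPhi^{t,\xi}_k$ to $\theta_{s,\cdot}(x)[t,\infty)$, i.e.\ the paper's proof specialized to constant $\sigma$.
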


\begin{proof}
  Denote $\xi=\theta_{s,\sigma}(x)$. For \(k\geq{1}\), let \(g_{k}\)
  be the function defined in Lemma~\ref{lem:epsilon_measurability},
  i.e.  $g_k(x,y)=\mathbb{P}^{(2)}_{(x,y)}[d_0(X,Y)\geq
  2^{-k}]$. Then, for all $y\in B(x,\varepsilon_k(x))$, we have
  $g_k(x,y)\leq 2^{-k}$. Finite point motions of the flow $\theta$ are
  Feller processes, in the sense that for any $n\in \mathbb{N},$
  $(s_1,x_1),\ldots,(s_n,x_n)\in \mathbb{R}\times M$ and
  $s\geq s_1\vee\ldots\vee s_n$ the process
  $((\theta_{s_1,t}(x_1),\ldots,\theta_{s_n,t}(x_n)):t\geq s)$ is an
  $(\mathcal{A}^{\theta}_t:t\geq s)$-Markov process in $M^n$ with
  transition function $\mathsf{P}^{(n)}_{\bullet}$. Below we will use
  the equality $\varPhi_n=\theta_{s_n,\cdot}(x_n).$

  For any $m\in \mathbb{N}$ and \(k\geq{1}\), let us introduce the
  event
  $E_{m,k} =
  \{\sigma\in [s,\infty), n^{\sigma,\xi}_k=m\}
  \in \mathcal{A}^{\theta}_\sigma$. By the strong Markov property for
  the two-point motions of $\theta$ at time \(\sigma\) we find
  that
  \begin{align*}
    &\mathbb{P}\left[
      s\leq \sigma<\infty,
      d_\sigma(\varPhi^{\sigma,\xi}_k,\theta_{s,\cdot}(x)[\sigma,\infty))
      \geq 2^{-k} \right]
    \\
    &=\sum_{m} \mathbb{P}\left[
      E_{m,k} \bigcap
      \left\{
      d_\sigma(\varPhi_m[\sigma,\infty),\theta_{s,\cdot}(x)[\sigma,\infty))
      \geq 2^{-k}
      \right\}
      \right]
    \\
    &=\sum_{m} \mathbb{E}\left[
      1_{E_{m,k}}\mathbb{P}\left[
      d_\sigma(\theta_{s_m,\cdot}(x_m)[\sigma,\infty),\theta_{s,\cdot}(x)[\sigma,\infty))
      \geq 2^{-k}\bigg| \mathcal{A}^{\theta}_\sigma
      \right]
      \right]
    \\
    &=\sum_{m} \mathbb{E}\left[
      1_{E_{m,k}}\mathbb{P}\left[
      d_0(\theta_{s_m,\sigma+\cdot}(x_m),\theta_{s,\sigma+\cdot}(x))
      \geq 2^{-k}\bigg| \mathcal{A}^{\theta}_\sigma
      \right]
      \right]
    \\
    &=\sum_{m}
      \mathbb{E}\left[
      1_{E_{m,k}}g_{k}(\theta_{s_m,\sigma}(x_m),\xi)
      \right]
      \leq 2^{-k}.     
  \end{align*}
  The latter estimate follows from the fact that on the event
  $E_{m,k}$ we have
  $\rho(\theta_{s_m,\sigma}(x_m),\xi)<\varepsilon_k(\xi)$. By the
  Borel-Cantelli lemma, a.s. on the set $\{s\leq \sigma<\infty\}$ the
  sequence $(\varPhi^{\sigma,\xi}_k:k\geq 1)$ converges to
  $\theta_{s,\cdot}(x)[\sigma,\infty)$. Hence, on this set
  a.s.
  $\theta_{\sigma,\cdot}(\xi)=\theta_{s,\cdot}(x)[\sigma,\infty)$
  and the lemma is proved.
\end{proof}

\subsection{Existence of a strong measurable continuous modification}
\label{sec:sf-meas-modif}

Assume that conditions (TF~\ref{item:consistency}),
(TF~\ref{item:coalescing_property}),
(TF~\ref{item:continuity_of_trajectories_TF3}) and (TF~\ref{item:14})
are satisfied. In particular, the sequence $\varPhi$ is a.s. a
skeleton. 

\begin{definition}
  \label{def:measurable_shell} A closed set
  $F\subset\mathbb{R}\times M$ is called a closed shell of the set of
  bifurcation points of $\varPhi,$ if a.s. $B(\varPhi)\subset F.$
\end{definition}

Let $F$ be a closed shell of the set of bifurcation points of
$\varPhi$. We define stopping times $\sigma^s_x(k),$ random variables
$z^s_x(k),$ $k^s_x,$ and random functions $\psi_{s,\cdot}(x)$ as in
Section~\ref{subsection:bifurcation_points}.

\begin{theorem}
\label{thm:sfp_stochastic} Assume that a.s. 

\begin{itemize}
\item for all $(s,x)\in \mathbb{R}\times M$,  $k^s_x<\infty$;

\item for all $(s,x)\in \mathbb{R}\times M$, if $k^s_x=0$ then for
  every $t>s$ there is $n\in I^t$ such that
  $\theta_{s,\cdot}(x)[t,\infty)=\varPhi_n[t,\infty)$.
\end{itemize}
Then $\psi$ is a strong measurable continuous modification of  $\psi^0$.
\end{theorem}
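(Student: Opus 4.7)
The plan is to apply Theorem~\ref{thm:sfp_deterministic} pathwise. Let $\Omega_{0}$ be the almost-sure event on which $\varPhi$ is a skeleton and the two bulleted hypotheses of Theorem~\ref{thm:sfp_deterministic} hold for every $(s,x)\in\mathbb{R}\times M$; by assumption $\mathbb{P}(\Omega_{0})=1$. On $\Omega_{0}$, Theorem~\ref{thm:sfp_deterministic} applied to $\varPhi(\omega)$ shows that for every $(s,x)$ the path $\psi_{s,\cdot}(x)$ lies in $C([s,\infty):M)$, satisfies $\psi_{s,s}(x)=x$, preserves the skeleton $\varPhi$, and that the strong flow identity $\psi_{t,u}\circ\psi_{s,t}=\psi_{s,u}$ holds for all $s\leq t\leq u$. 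Off $\Omega_{0}$ I redefine $\psi_{s,t}(\omega,x):=x$, so that pathwise continuity and (SF~\ref{item:sfp}) are valid for every $\omega\in\Omega$.

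Measurability of $(s,t,x,\omega)\mapsto\psi_{s,t}(\omega,x)$ follows at once: by Lemma~\ref{lem:sigma_z_measurability} each $(\sigma^{s}_{x}(k),z^{s}_{x}(k))$ is measurable in $(s,x,\omega)$, $\theta$ is measurable by Theorem~\ref{thm:measurable_modification}, and $\psi$ is defined piecewise by measurable formulas on the measurable sets $\{\sigma^{s}_{x}(k)\leq t<\sigma^{s}_{x}(k+1)\}$ and $\{k^{s}_{x}<\infty,\ t\geq\sigma^{s}_{x}(\infty)\}$.

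The main remaining point is to verify that $\psi$ is a modification of $\psi^{0}$, i.e.\ $\mathbb{P}[\psi_{s,t}(x)=\psi^{0}_{s,t}(x)]=1$ for every fixed $(s,t,x)$. Since $\theta$ is already a modification of $\psi^{0}$ by Theorem~\ref{thm:measurable_modification}, it suffices to show that $\psi_{s,t}(x)=\theta_{s,t}(x)$ almost surely for each $(s,t,x)$. I would prove by induction on $k$ that each $\sigma^{s}_{x}(k)$ is an $\mathcal{A}^{\theta}$-stopping time and that $z^{s}_{x}(k)=\theta_{s,\sigma^{s}_{x}(k)}(x)$ almost surely on $\{\sigma^{s}_{x}(k)<\infty\}$. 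The base case rests on the fact that $\sigma^{s}_{x}(1)$ is the first entry of the continuous adapted process $r\mapsto(r,\theta_{s,r}(x))$ into the closed set $F$, hence a stopping time. Lemma~\ref{lem:stopping_time_flow} applied at $\sigma^{s}_{x}(1)$ then gives $\theta_{\sigma^{s}_{x}(1),\cdot}(z^{s}_{x}(1))=\theta_{s,\cdot}(x)[\sigma^{s}_{x}(1),\infty)$ almost surely on $\{\sigma^{s}_{x}(1)<\infty\}$, so that $\psi_{s,t}(x)=\theta_{s,t}(x)$ almost surely for all $t\in[s,\sigma^{s}_{x}(2))$. The induction step is identical: the almost-sure identity $z^{s}_{x}(k)=\theta_{s,\sigma^{s}_{x}(k)}(x)$ reduces $\sigma^{s}_{x}(k+1)$ to the next entry of $r\mapsto(r,\theta_{s,r}(x))$ into $F$ after $\sigma^{s}_{x}(k)$, to which Lemma~\ref{lem:stopping_time_flow} applies again. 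Since $k^{s}_{x}<\infty$ almost surely, finitely many iterations yield $\psi_{s,t}(x)=\theta_{s,t}(x)$ almost surely for all $t\geq s$.

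The main technical obstacle is this inductive identification of $\psi$ with $\theta$: one must carefully check that the $\sigma^{s}_{x}(k)$ are genuine $\mathcal{A}^{\theta}$-stopping times, for which the closedness of the shell $F$ and the continuity of the trajectories of $\theta$ are essential, and that Lemma~\ref{lem:stopping_time_flow} indeed applies at each iteration. Once this is established, properties (SF~\ref{item:regularity}), (SF~\ref{item:independence}), (SF~\ref{item:sf_time_continuity}) and (SF~\ref{item:sf_spatial_continuity}) are inherited from $\psi^{0}$, while pathwise continuity, $\psi_{s,s}(x)=x$ and (SF~\ref{item:sfp}) are supplied by the pathwise application of Theorem~\ref{thm:sfp_deterministic}; together these yield that $\psi$ is a strong measurable continuous modification of $\psi^{0}$.
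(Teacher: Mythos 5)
Your proposal is correct and follows essentially the same route as the paper: a pathwise application of Theorem~\ref{thm:sfp_deterministic} on the almost-sure event, combined with the inductive use of Lemma~\ref{lem:stopping_time_flow} at the stopping times $\sigma^{s}_{x}(k)$ to identify $\psi$ with $\theta$ (hence with $\psi^{0}$) as a modification. The paper's proof is just a condensed version of the argument you spell out.
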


\begin{proof}
  Using Theorem~\ref{thm:sfp_deterministic} one obtains that
  a.s. \(\psi\) is a random family of continuous mappings that
  satisfies the strong flow property and preserves \(\varPhi\). It
  remains only to check that $\psi$ is a modification of $\theta$,
  which follows from Lemma \ref{lem:stopping_time_flow} and the
  definition of \(\psi\) out of \(\theta\).
\end{proof}

\section{The instantaneous coalescence property}
\label{sec:icp}

In this section we study skeletons $\varphi$ in $M$ that possess the
instantaneous coalescing property (ICP). By $\psi^0$ we denote a
stochastic flow of measurable mappings in $M$ associated to a
consistent sequence $(\mathsf{P}^{(n)}_{\bullet}:n\in \mathbb{N})$ of
coalescing Feller transition functions on $M$. It is assumed that
$(\mathsf{P}^{(n)}_{\bullet}:n\in \mathbb{N})$ satisfies conditions
(TF~\ref{item:consistency}), (TF~\ref{item:coalescing_property}),
(TF~\ref{item:continuity_of_trajectories_TF3}) and
(TF~\ref{item:14}). As in Section~\ref{sec:strong-flow-meas}, we let
$\mathcal{D}=\{(s_n,x_n):n\in \mathbb{N}\}$ be a dense subset of
\(\mathbb{R}\times{M}\) and let $\varPhi=(\varPhi_n: n\in \mathbb{N})$
be the skeleton of \(\psi^{0}\) constructed in
Section~\ref{sec:rand-sequ-varphi}. Recall that for each \(n\),
$\varPhi_n$ is a continuous modification of
$\psi^0_{s_n,\cdot}(x_n)$. Let also \(\theta\) be the measurable
continuous modification of $\psi^0$ defined in
Section~\ref{sec:meas-modif} such that \(\theta\) preserves
\(\varPhi\) a.s. In this section, it will be proves that if $\varPhi$
satisfies the ICP a.s., then $\theta$ is a strong measurable
continuous modification of $\psi^0$.

\subsection{A sufficient condition for the strong flow property}
\label{sec:sufficient-condition}

We give a simple condition under which $\theta$ is a strong measurable
continuous modification of \(\psi^{0}\). Note that if one takes
\(F=\mathbb{R}\times{M}\) in Theorem~\ref{thm:sfp_deterministic}, then
\(k^s_x=0\) for all \((s,x)\in\mathbb{R}\times{M}\) and
$\psi_{s,t}=\theta_{s,t}$ for all $s\leq t$.

\begin{lemma} \label{lem:icp_strong_flow} Assume that a.s., for any
  $s<t$ and $x\in M$
  \begin{equation}
    \label{eq:icp}
    \exists n\in I^{t}, \qquad \theta_{s,\cdot}(x)[t,\infty)=\varPhi_n[t,\infty).
  \end{equation}
  Then $\theta$ is a strong measurable continuous modification of $\psi^0$.
\end{lemma}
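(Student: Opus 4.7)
The plan is to deduce the lemma as an immediate application of Theorem~\ref{thm:sfp_stochastic} with the trivial closed shell $F=\mathbb{R}\times M$. First I would observe that $F=\mathbb{R}\times M$ is closed and a.s.\ contains $B(\varPhi)$, so it qualifies as a closed shell in the sense of Definition~\ref{def:measurable_shell}.

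Next I would analyze the construction of the stopping times $(\sigma^{s}_{x}(k),z^{s}_{x}(k))$ built from Section~\ref{subsection:bifurcation_points} with this choice. Since every pair $(t,y)\in\mathbb{R}\times M$ lies in $F$, for each $(s,x)$ with $\sigma^{s}_{x}(k)<\infty$ the infimum defining $\sigma^{s}_{x}(k+1)$ is immediately attained, so $\sigma^{s}_{x}(k+1)=\sigma^{s}_{x}(k)$. Consequently $k^{s}_{x}=0$ and $\sigma^{s}_{x}(\infty)=s$ for every $(s,x)\in\mathbb{R}\times M$. In particular, the first hypothesis of Theorem~\ref{thm:sfp_stochastic} (a.s.\ $k^{s}_{x}<\infty$ for all $(s,x)$) holds trivially, while the second hypothesis, which applies when $k^{s}_{x}=0$, becomes exactly the assumption \eqref{eq:icp} of the lemma.

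Finally I would unwind the definition of $\psi$ from Section~\ref{subsection:bifurcation_points}: since $k^{s}_{x}=0$ and $\sigma^{s}_{x}(\infty)=s$, the case $t\geq\sigma^{s}_{x}(\infty)$ with $k^{s}_{x}<\infty$ gives
\[
\psi_{s,t}(x)=\theta_{\sigma^{s}_{x}(0),t}(z^{s}_{x}(0))=\theta_{s,t}(x)
\]
for every $t\geq s$. Hence $\psi$ and $\theta$ coincide, and Theorem~\ref{thm:sfp_stochastic} asserts that $\psi=\theta$ is a strong measurable continuous modification of $\psi^{0}$, as $\theta$ is already known to be a measurable continuous modification of $\psi^{0}$ by Theorem~\ref{thm:measurable_modification}.

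There is essentially no hard step here: all the work has been packaged into Theorems~\ref{thm:measurable_modification} and~\ref{thm:sfp_stochastic}. The key conceptual point, which makes the proof very short, is that the ICP assumption \eqref{eq:icp} collapses the elaborate reconstruction of $\psi$ out of $\theta$ via bifurcation stopping times into the identity $\psi=\theta$ once one selects the maximal possible shell $F=\mathbb{R}\times M$.
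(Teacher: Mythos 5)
Your proof is correct, but it takes a different route from the paper's. The paper proves the lemma directly in two lines: fixing $s<t$ and $x$, hypothesis \eqref{eq:icp} gives $n\in I^{t}$ with $\theta_{s,t}(x)=\varPhi_{n}(t)$, and since $\theta$ preserves the skeleton, $\theta_{t,u}(\theta_{s,t}(x))=\theta_{t,u}(\varPhi_{n}(t))=\varPhi_{n}(u)=\theta_{s,u}(x)$ for all $u\geq t$; this is the strong flow property, and measurability and continuity are already supplied by Theorem~\ref{thm:measurable_modification}. You instead invoke the general machinery of Theorem~\ref{thm:sfp_stochastic} with the degenerate shell $F=\mathbb{R}\times M$, checking that then $k^{s}_{x}=0$ identically, that $\psi=\theta$, and that the second hypothesis of that theorem reduces exactly to \eqref{eq:icp}. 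This is valid (the paper itself remarks, just before the lemma, that the choice $F=\mathbb{R}\times M$ forces $k^{s}_{x}=0$ and $\psi=\theta$), and if you unwind the proof of Theorem~\ref{thm:sfp_deterministic} in the case $S_{0}=\mathbb{R}\times M$ you recover precisely the paper's direct computation. What the direct argument buys is self-containedness and brevity; what your reduction buys is a clean illustration that Lemma~\ref{lem:icp_strong_flow} is the boundary case of the bifurcation-point construction. One cosmetic point: the infimum $\inf\{t>\sigma^{s}_{x}(k):\ldots\}$ over the whole half-line is not ``attained'' — it simply equals $\sigma^{s}_{x}(k)$ because the set $\{t>\sigma^{s}_{x}(k)\}$ has that infimum; the conclusion $\sigma^{s}_{x}(k+1)=\sigma^{s}_{x}(k)$ is nonetheless correct.
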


\begin{proof}
  Let \(\Omega^{0}\) be an event of probability one on which
  \(\varPhi\) is a skeleton, \(\theta\) preserves \(\varPhi\) and
  \eqref{eq:icp} is satisfied for all \(s<t\) and \(x\in M\). Let us
  fix \(\omega\in\Omega^{0}\). Let $s<t$ and $x\in M$. Then
  \eqref{eq:icp} with the fact that $\theta$ preserves the skeleton
  implies that for all \(u>t\),
  \begin{displaymath}
    \theta_{t,u}(\theta_{s,t}(x))
    = \theta_{t,u}(\varPhi_n(t))
    = \varPhi_n(u)
    = \theta_{s,u}(x).
  \end{displaymath}
  This proves the lemma.
\end{proof}

The condition given in Lemma \ref{lem:icp_strong_flow} means that
after any arbitrary short time, trajectories of $\theta$ meet
trajectories of $\varPhi$. This happens under the instantaneous
coalescence property we introduce in the next section.

\subsection{The instantaneous coalescence property}
\label{sec:inst_coalesc_general}

In this section we give a sufficient condition that allows to apply
Lemma~\ref{lem:icp_strong_flow} and therefore to prove that \(\theta\)
is a strong measurable continuous modification of \(\psi^{0}\).

\begin{definition}
  \label{def:skeleton_instant_coalescence}
  A sequence \(\varphi\in X^{\mathbb{N}}\) is said to possess the
  instantaneous coalescence property (ICP) if for any $s<t$, the set
  $\{\varphi_n(t):\ n\in I^s\}$ is locally finite.
\end{definition}

\begin{lemma}
  \label{lem:strong_flow_instant_coalescence}
  Assume that $\varPhi$ possesses the ICP almost surely. Then
  \(\theta\) is a strong measurable continuous modification of
  $\psi^0$.
\end{lemma}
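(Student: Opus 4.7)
The plan is to apply Lemma~\ref{lem:icp_strong_flow}: I need to show that a.s., for every $s<t$ and $x\in M$, there exists $n\in I^{t}$ with $\theta_{s,\cdot}(x)[t,\infty)=\varPhi_{n}[t,\infty)$. I will work on a single event of full probability on which $\varPhi$ is a skeleton, $\theta$ preserves $\varPhi$, and $\varPhi$ satisfies the ICP, then make a deterministic argument for fixed $\omega$.

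The starting observation is that by construction of $\Theta$ in Section~\ref{subsection:skeleton_extension}, $\theta_{s,\cdot}(x)=\ell((\varPhi^{s,x}_{k}:k\in\mathbb{N}))$ is a limit point in $X$ of the sequence $\varPhi^{s,x}_{k}=\varPhi_{m_{k}}[s,\infty)$, where $m_{k}:=n^{s,x}_{k}\in I^{s}$. I would extract a subsequence $(k_{j})$ such that $\varPhi_{m_{k_{j}}}[s,\infty)\to\theta_{s,\cdot}(x)$ uniformly on compact subsets of $[s,\infty)$. In particular, $\varPhi_{m_{k_{j}}}(t)\to\theta_{s,t}(x)$.

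The ICP is now used at time $t$: the set $\{\varPhi_{n}(t):n\in I^{s}\}$ is locally finite, and since the sequence $(\varPhi_{m_{k_{j}}}(t))_{j}$ is eventually contained in a small compact neighborhood of its limit, it must be eventually constant, equal to $\theta_{s,t}(x)$. For any two large indices $j,j'$ we then have $\varPhi_{m_{k_{j}}}(t)=\varPhi_{m_{k_{j'}}}(t)$ with $m_{k_{j}},m_{k_{j'}}\in I^{s}\subset I^{t}$, so property (Sk~\ref{item:coalescence}) upgrades this pointwise equality to $\varPhi_{m_{k_{j}}}[t,\infty)=\varPhi_{m_{k_{j'}}}[t,\infty)$. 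Thus there is a single path $\gamma\in C([t,\infty):M)$ with $\varPhi_{m_{k_{j}}}[t,\infty)=\gamma$ for all large $j$, and passing to the limit pointwise in $u\geq t$ gives $\gamma(u)=\theta_{s,u}(x)$.

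Taking $n=m_{k_{j}}$ for any sufficiently large $j$ therefore yields $n\in I^{s}\subset I^{t}$ and $\varPhi_{n}[t,\infty)=\theta_{s,\cdot}(x)[t,\infty)$, verifying \eqref{eq:icp}. Lemma~\ref{lem:icp_strong_flow} then gives the conclusion. The only subtle step is the combination of local finiteness (which makes the convergent sequence eventually constant at time $t$) with the coalescence property (which propagates equality at time $t$ to equality of the whole paths on $[t,\infty)$); everything else is routine extraction of subsequences and passage to the limit.
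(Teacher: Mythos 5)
Your proposal is correct and follows essentially the same route as the paper's own proof: extract a convergent subsequence of the approximating skeleton paths $\varPhi^{s,x}_{k}$, use the ICP (local finiteness of $\{\varPhi_{n}(t):n\in I^{s}\}$) to force the values at time $t$ to be eventually constant and equal to $\theta_{s,t}(x)$, upgrade this to equality of paths on $[t,\infty)$ via (Sk~\ref{item:coalescence}), and conclude with Lemma~\ref{lem:icp_strong_flow}. No gaps.
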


\begin{proof} We just have to verify that the condition of
  Lemma~\ref{lem:icp_strong_flow} is satisfied by \(\theta\). Let us
  place on an event of probability one on which \(\varPhi\) is a
  skeleton possessing the ICP and \(\theta\) preserves \(\varPhi\).
  Let $t>s$, \(x\in{M}\) and $y=\theta_{s,t}(x)$.  The set
  \(\{\varPhi_{k}^{s,x}\}_{k\geq{1}}\) being relative compact, there
  is a subsequence $\{\varPhi^{s,x}_{k_i}\}_{i\geq 1}$ such that
  $\lim_{i\to\infty}\varPhi^{s,x}_{k_i}=\theta_{s,\cdot}(x)$ and in
  particular that $\lim_{i\to\infty}\varPhi^{s,x}_{k_i}(t)=y$.  The
  ball $B(y,\varepsilon)$ is relatively compact. The ICP implies that
  \(B(y,\varepsilon)\cap \{\varPhi_n(t):n\in I^{s}\}\) is a finite set
  and so there is \(i_{0}\) such that for all \(i\geq{i_{0}}\),
  \(\varPhi_{k_{i}}^{s,x}(t)=y\).  Hence, there exists $n\in I^s$ such
  that for all $i\geq{i_{0}}$,
  $\varPhi^{s,x}_{k_i}[t,\infty)=\varPhi_n[t,\infty)$. It follows that
  $$
  \theta_{s,u}(x)=\varPhi_n(u), \ u\geq t.
  $$
  The condition of Lemma~\ref{lem:icp_strong_flow} is verified and, as
  a consequence, \(\theta\) is a strong measurable continuous
  modification of $\psi^0$.
\end{proof}

\subsection{Sufficient condition ensuring the a.s. ICP for \(\varPhi\)}
\label{sec:suff-cond-ensur}
The sufficient condition given in this section is an extension of the one
given in \cite{zbMATH06176802}. We recall that \((X,Y)\) denotes the
canonical process on \(C([0,\infty):M^{2})\), so that under
\(\mathbb{P}^{(2)}_{(x,y)}\) the process \((X,Y)\) is a Markov process
with transition function $\mathsf{P}^{(2)}_{\bullet}$ and starting point
$(x,y).$ Also, $X^{(n)}=(X^1,\ldots,X^n)$ denotes the canonical
process on \(C([0,\infty):M^{2})\), so that under
\(\mathbb{P}^{(n)}_{x}\) the process \(X^{(n)}\) is a Markov process
with transition function $\mathsf{P}^{(n)}_{\bullet}$ and starting point $x.$

Let us say that a compact set \(K\subset{M}\) satisfies
\(\mathcal{P}\) if there are
positive and finite constants \(\alpha,\beta,\kappa,p\) and \(C\),
such that \(\alpha\kappa>1\) and such that conditions~(P\ref{item:22}) and
(P\ref{item:23}) given below are satisfied:
\begin{enumerate}[(P1)]
\item\label{item:22} For all \(\varepsilon>0, (x,y)\in{}K^{2}\), it holds
  that
  \begin{equation}
    \label{eq:3}
    \rho(x,y)\leq \varepsilon\: \Longrightarrow{}\:
    \mathbb{P}^{(2)}_{(x,y)}[\sigma\wedge\tau>\beta\varepsilon^{\alpha}]\leq{1-p}
  \end{equation}
  where \(\sigma=\inf\{t:(X_{t},Y_{t})\not\in{K^{2}}\}\) and
  \(\tau=\inf\{t:X_{t}=Y_{t}\}\);
\item\label{item:23} For all \(A\subset{K}\) and \(n\geq{1}\), it holds that
  \begin{equation}
    \label{eq:4}
    \#A\geq{}n
    \:\Longrightarrow\:
    \rho(x,y)\leq{}Cn^{-\kappa}\:
    \textrm{for some}\:(x,y)\in{}A^{2}, \ x\ne y.
  \end{equation}
\end{enumerate}

We will use the following notation in this section: for \(s\leq{t}\) and
\(n\in{I^{s}}\), the point \(\varPhi_{n}(t)\) will be denoted
\(\varphi_{s,t}(x)\) where \(x=\varPhi_{n}(s)\). Then
\(\varphi_{s,\cdot}:[s,\infty)\to{M},\:t\mapsto{}\varphi_{s,t}(x)\) is
continuous (i.e. \(\varphi_{s,\cdot}\in{}C([s,\infty):M)\)) and
\(\varphi_{s,s}(x)=x\).

Let us first prove a localized ICP.  For all \(s\leq{}t\) and compact
\(K\subset M\), set
\(A^{K}_{s,t}=\{\varPhi_{n}(t):n\in{I^{s}},\:\varPhi_{n}([s,t])\subset{K}\}\).
\begin{proposition}
  \label{prop:A_t_finite}
  If \(K\subset{M}\) is a compact set satisfying \(\mathcal{P}\), then
  a.s. \(A^{K}_{s,t}\) is a finite set for all \(t>s\).
\end{proposition}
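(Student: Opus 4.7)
The strategy is to prove that $\#A^{K}_{s,t}<\infty$ almost surely for each fixed $t>s$, and then extend to all $t>s$ by monotonicity. Setting $V_{u}=\{n\in I^{s}:\,\varPhi_{n}([s,u])\subset{K}\}$, we have $V_{u'}\subset{V_{u}}$ whenever $u\leq{u'}$, and by~(Sk~\ref{item:coalescence}), two indices whose trajectories coincide at time $u$ coincide on $[u,\infty)$. Consequently the map $n\mapsto\varPhi_{n}(u')$ on $V_{u'}$ is coarser than $n\mapsto\varPhi_{n}(u)$, yielding $\#A^{K}_{s,u'}\leq\#A^{K}_{s,u}$. Thus it suffices to prove finiteness at a countable dense set of times $t\in(s,\infty)$. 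For each such $t$, I will work on the truncated skeleton $(\varPhi_{n})_{n\leq{N_{0}}}$ and pass to the limit $N_{0}\to\infty$, using that $N^{(N_{0})}_{t}:=\#\{\varPhi_{n}(t):\,n\leq{N_{0}},\,\varPhi_{n}([s,t])\subset{K}\}$ increases to $\#A^{K}_{s,t}$.

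The core step is a one-step coalescence estimate. Set $\delta_{n}=\beta{}C^{\alpha}n^{-\alpha\kappa}$; since $\alpha\kappa>1$, $\sum_{n\geq{1}}\delta_{n}<\infty$. Define the stopping times $\sigma^{(N_{0})}_{n}=\inf\{u\geq{s}:N^{(N_{0})}_{u}\leq{n}\}$; clearly $\sigma^{(N_{0})}_{N_{0}}=s$. On the event $\{N^{(N_{0})}_{\sigma^{(N_{0})}_{n+1}}=n+1\}$, the $n+1$ active values form an $(n+1)$-subset of $K$, so by~(P\ref{item:23}) a pair lies within $C(n+1)^{-\kappa}$; pick such a pair of indices $m<m'$ in an adapted way (e.g., lexicographically smallest). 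By the strong Markov property for the two-point motion $(\varPhi_{m},\varPhi_{m'})$, valid via~(TF~\ref{item:consistency}) and the measurability of $\theta$, combined with~(P\ref{item:22}), the probability that this pair neither coalesces nor exits $K^{2}$ within the following $\delta_{n+1}$ units of time is at most $1-p$. On the complementary event, $N^{(N_{0})}$ decreases by at least one before $\sigma^{(N_{0})}_{n+1}+\delta_{n+1}$: coalescence merges two distinct values into one, while exit from $K$ disqualifies the corresponding index from all subsequent $V_{u}$. Iterating over consecutive blocks of length $\delta_{n+1}$ using the strong Markov property at each block boundary,
\[
\mathbb{P}[\sigma^{(N_{0})}_{n}-\sigma^{(N_{0})}_{n+1}>k\delta_{n+1}]\leq(1-p)^{k},
\]
and therefore $\mathbb{E}[\sigma^{(N_{0})}_{n}-\sigma^{(N_{0})}_{n+1}]\leq\delta_{n+1}/p$.

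Telescoping from $N_{0}$ down to $n_{0}$ and using $\sigma^{(N_{0})}_{N_{0}}=s$ gives, uniformly in $N_{0}\geq{n_{0}}$,
\[
\mathbb{E}[\sigma^{(N_{0})}_{n_{0}}-s]\leq{}p^{-1}\sum_{n>n_{0}}\delta_{n},
\]
and Markov's inequality yields $\mathbb{P}[N^{(N_{0})}_{t}>n_{0}]\leq{(t-s)^{-1}p^{-1}\sum_{n>n_{0}}\delta_{n}}$. Taking $N_{0}\to\infty$ by monotone convergence, then $n_{0}\to\infty$ using $\alpha\kappa>1$, shows $\mathbb{P}[\#A^{K}_{s,t}=\infty]=0$ for each fixed $t>s$, and the proposition follows by the monotonicity reduction of the first paragraph.

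The main subtlety is the measurable selection of a close pair of indices at the random time $\sigma^{(N_{0})}_{n+1}$: the choice must be adapted to the history of the truncated skeleton so that the strong Markov property delivers a bona fide two-point motion started from a close configuration, to which~(P\ref{item:22}) directly applies. A lexicographically minimal choice among all index pairs realizing the bound of~(P\ref{item:23}) achieves this, after which the rest of the argument is standard bookkeeping with geometric tails and the summability $\sum_n n^{-\alpha\kappa}<\infty$.
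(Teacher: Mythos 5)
Your argument is correct in substance and rests on exactly the same mechanism as the paper's: by (P\ref{item:23}), whenever $k$ distinct trajectories remain in $K$ some pair lies within $Ck^{-\kappa}$, so by (P\ref{item:22}) and the strong Markov property the count drops with probability at least $p$ within time $\beta C^{\alpha}k^{-\alpha\kappa}$; iterating gives geometric tails, the expected waiting times are summable because $\alpha\kappa>1$, and Markov's inequality finishes. The difference is one of bookkeeping. The paper (Lemmas~\ref{lem:EMS_1}--\ref{lem:EMS_3}) fixes an $n$-point motion under $\mathbb{P}^{(n)}_{x}$, conditions on $\mathcal{A}_{0}$, and bounds $\mathbb{E}^{(n)}_{x}[\sigma^{n}\wedge\tau^{n}_{m}]\leq C_{2}m^{1-\kappa\alpha}$ uniformly in $n$, where $\sigma^{n}$ is the exit time of the \emph{whole} vector from $K^{n}$; you instead track the counting process $N^{(N_{0})}_{u}$ of distinct values among trajectories that have not yet left $K$, together with its successive hitting times. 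Your version has the merit that a trajectory leaving $K$ is simply removed from the count rather than stopping the whole system, which makes the passage from the pairwise estimate to the event $\{\#A^{K}_{s,t}>n_{0}\}$ completely transparent; in the paper this passage is compressed into the final display and requires relating $\{\#A_{t}\geq m+1\}$ to events in which all coordinates of an $n$-point motion stay in $K$. (For the ``exit'' alternative you should note explicitly that, by (Sk~\ref{item:coalescence}), all indices sharing the exiting value are disqualified together, so a whole value is indeed removed from the count unless a merge has already occurred.)

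Two loose ends. First, since $N^{(N_{0})}$ is non-increasing but neither left- nor right-continuous, its value at the hitting time $\sigma^{(N_{0})}_{n+1}$ may still exceed $n+1$ (for instance when a trajectory leaves $K$ immediately after that time), so the event $\{N^{(N_{0})}_{\sigma^{(N_{0})}_{n+1}}=n+1\}$ on which you launch the block argument does not cover $\{\sigma^{(N_{0})}_{n}>\sigma^{(N_{0})}_{n+1}\}$. The standard repair is to apply the Markov property at the times $\sigma^{(N_{0})}_{n+1}+j\delta_{n+1}$, $j\geq 1$, at which the count equals exactly $n+1$ on $\{\sigma^{(N_{0})}_{n}>\sigma^{(N_{0})}_{n+1}+j\delta_{n+1}\}$; this only changes the bound to $\mathbb{E}[\sigma^{(N_{0})}_{n}-\sigma^{(N_{0})}_{n+1}]\leq(1+p^{-1})\delta_{n+1}$, preserving summability. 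Second, your proof fixes $s$ throughout, whereas the proposition (as it is used in Proposition~\ref{prop:icp-when-m}) requires a single null set outside of which $A^{K}_{s,t}$ is finite for \emph{all} $s<t$. This follows from what you prove via the inclusion $A^{K}_{s,t}\subset A^{K}_{a,t}$ for rational $a\in(s,t)$, exactly as in the last paragraph of the paper's proof, but the step should be stated.
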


For the following
lemmas~\ref{lem:EMS_1},~\ref{lem:EMS_2},~\ref{lem:EMS_3} and in the
proof of Proposition~\ref{prop:A_t_finite}, we let \(K\) be a compact
subset of \(M\) satisfying \(\mathcal{P}\) and let
\(\alpha,\beta,\kappa,p\) and \(C\) be constants such that
\(\alpha\kappa>1\) and such that~(P\ref{item:22}) and~(P\ref{item:23})
are satisfied.  For \(n\geq{1}\), let
\(\sigma^{n}=\inf\{t:X^{(n)}_{t}\not\in{K^{n}}\}\) and for
\(m\leq{}n\), let \(\tau_{m}^{n}\) be the first time \(t\) such that
\(\#\{X^{1}_{t},\ldots,X^{n}_{t}\}\leq{}m\).

  \begin{lemma}\label{lem:EMS_1}
    Let \(x\in{K^{n}}\) and \(k\geq{1}\) be such that
    \(\#\{x^{1},\ldots,x^{n}\}\leq{k}\). Then for all \(t\geq 0\) and
    \(\varepsilon={Ck^{-\kappa}}\)
    \begin{equation}\label{eq:5}
      \mathbb{P}^{(n)}_{x}[\sigma^{n}\wedge\tau^{n}_{k-1}>\beta\varepsilon^{\alpha}+t]
      \leq (1-p) \mathbb{P}^{(n)}_{x}[\sigma^{n}\wedge\tau^{n}_{k-1}>t]
    \end{equation}
  \end{lemma}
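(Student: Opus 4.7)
The plan is to apply the Markov property at time $t$ and reduce the estimate for the $n$-point motion to the two-point estimate~(P\ref{item:22}) for an appropriately chosen pair of coordinates.

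First, I will observe the structural consequence of the coalescing property: under $\mathbb{P}^{(n)}_{x}$ the number $N_{s}:=\#\{X^{1}_{s},\ldots,X^{n}_{s}\}$ is non-increasing in $s$ (this follows from (TF~\ref{item:coalescing_property}) combined with the consistency (TF~\ref{item:consistency}) applied to pairs). Since $N_{0}\leq k$ by hypothesis and $\tau^{n}_{k-1}$ is the first time $N$ drops to $k-1$, on the event $\{\sigma^{n}\wedge\tau^{n}_{k-1}>t\}$ we have $N_{t}=k$ and $X^{(n)}_{t}\in K^{n}$. By property~(P\ref{item:23}) applied to the set $A=\{X^{1}_{t},\ldots,X^{n}_{t}\}\subset K$ with $\#A\geq k$, there exists a pair $(i,j)$ with $i<j$ such that $X^{i}_{t}\neq X^{j}_{t}$ and $\rho(X^{i}_{t},X^{j}_{t})\leq Ck^{-\kappa}=\varepsilon$.

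Next, I choose such a pair in a measurable way: define $I,J:K^{n}\to\{1,\dots,n\}$ so that, whenever $\#\{y^{1},\ldots,y^{n}\}\geq k$ and $y\in K^{n}$, the pair $(I(y),J(y))$ is (say, lexicographically first) with $y^{I(y)}\neq y^{J(y)}$ and $\rho(y^{I(y)},y^{J(y)})\leq\varepsilon$. For fixed indices $i<j$, set $\sigma_{ij}=\inf\{s\geq 0:(X^{i}_{t+s},X^{j}_{t+s})\notin K^{2}\}$ and $\tau_{ij}=\inf\{s\geq 0:X^{i}_{t+s}=X^{j}_{t+s}\}$. The key observation is that on $\{\sigma^{n}\wedge\tau^{n}_{k-1}>t\}$, with $(I,J)=(I(X^{(n)}_{t}),J(X^{(n)}_{t}))$, we have the inclusion of events
\begin{equation*}
  \{\sigma^{n}\wedge\tau^{n}_{k-1}>t+\beta\varepsilon^{\alpha}\}
  \subset \{\sigma_{IJ}\wedge\tau_{IJ}>\beta\varepsilon^{\alpha}\},
\end{equation*}
because if the $n$-point process stays in $K^{n}$ the pair stays in $K^{2}$, and if the number of distinct coordinates stays $\geq k$ the pair $X^{I},X^{J}$ (which were distinct at time $t$) cannot have coalesced.

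Conditioning on $\mathcal{F}_{t}$, the Markov property at the deterministic time $t$ gives that $(X^{(n)}_{t+\cdot})$ is distributed as $\mathbb{P}^{(n)}_{X^{(n)}_{t}}$, and the consistency property (TF~\ref{item:consistency}) tells us that the marginal distribution of $(X^{I}_{t+\cdot},X^{J}_{t+\cdot})$ is $\mathbb{P}^{(2)}_{(X^{I}_{t},X^{J}_{t})}$. Since $\rho(X^{I}_{t},X^{J}_{t})\leq\varepsilon$ and $(X^{I}_{t},X^{J}_{t})\in K^{2}$, property~(P\ref{item:22}) yields
\begin{equation*}
  \mathbb{P}^{(n)}_{x}\bigl[\sigma^{n}\wedge\tau^{n}_{k-1}>t+\beta\varepsilon^{\alpha}\,\bigl|\,\mathcal{F}_{t}\bigr]
  \leq \mathbb{P}^{(2)}_{(X^{I}_{t},X^{J}_{t})}\bigl[\sigma_{IJ}\wedge\tau_{IJ}>\beta\varepsilon^{\alpha}\bigr]
  \leq 1-p
\end{equation*}
on $\{\sigma^{n}\wedge\tau^{n}_{k-1}>t\}$. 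Taking expectation gives~\eqref{eq:5}. There is no substantial obstacle: the only point requiring a bit of care is the measurable selection of the pair $(I,J)$, which is handled trivially since there are finitely many candidate pairs to choose from.
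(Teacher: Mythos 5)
Your proof is correct and follows essentially the same route as the paper's: reduce via the Markov property at time $t$ to the starting configuration $X^{(n)}_t\in K^n$ with exactly $k$ distinct coordinates, extract a pair at distance $\le Ck^{-\kappa}$ by (P2), and apply the two-point estimate (P1). The only (cosmetic) difference is that the paper first proves the $t=0$ bound for a fixed starting point and then conditions, whereas you condition first and select the pair measurably at time $t$.
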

  \begin{proof}
    Let \(x\in{K^{n}}\) and \(k\geq{1}\). If
    \(\#\{x^{1},\ldots,x^{n}\}\leq{k-1}\) then \(\tau^{n}_{k-1}=0\) and
    \eqref{eq:5} holds. If \(\#\{x^{1},\ldots,x^{n}\}={k}\), then there
    is \(i\neq{j}\) such that \(\rho(x^{i},x^{j})\leq {Ck^{-\kappa}}\).
    Therefore, since \(\varepsilon=Ck^{-\kappa}\),
    \begin{equation*}
      \mathbb{P}^{(n)}_{x}[\sigma^{n}\wedge\tau^{n}_{k-1}>\beta\varepsilon^{\alpha}]
      \leq
      \mathbb{P}^{(2)}_{(x^{i},x^{j})}[\sigma\wedge\tau>\beta\varepsilon^{\alpha}]
      \leq 1-p.
    \end{equation*}
    Using the Markov property at time \(t\), we have
    \begin{align*}
      \mathbb{P}^{(n)}_{x}[\sigma^{n}\wedge\tau^{n}_{k-1}>\beta\varepsilon^{\alpha}+t]
      & =
        \mathbb{E}^{n}_{x}
        \left[
        1_{{\{\sigma^{n}\wedge\tau^{n}_{k-1}>{t}\}}}
        \mathbb{P}^{(n)}_{X^{n}_{t}}[\sigma^{n}\wedge\tau^{n}_{k-1}>\beta\varepsilon^{\alpha}]
        \right]\\
      & \leq
        (1-p)\mathbb{P}^{(n)}_{x}[\sigma^{n}\wedge\tau^{n}_{k-1}>{}t]
    \end{align*}
    Where we have used in the last inequality the fact that, on the
    event \(\{\sigma^{n}\wedge\tau^{n}_{k-1}>{t}\}\),
    \(X^{(n)}_{t}\in{K^{n}}\) and
    \(\#\{X^{1}_{t},\ldots,X^{n}_{t}\}={k}\).
  \end{proof}

  \begin{lemma}\label{lem:EMS_2}
    Let \(x\in{K^{n}}\) and \(k\geq{1}\). Then, if
    \(\#\{x^{1},\ldots,x^{n}\}\leq{k}\),
    \begin{enumerate}[(i)]
    \item\label{item:5} for all \(j\geq{1}\),
      \(\mathbb{P}^{(n)}_{x}[\sigma^{n}\wedge\tau^{n}_{k-1}>{j}\beta\varepsilon^{\alpha}]
      \leq (1-p)^{j}\), where \(\varepsilon={Ck^{-\kappa}}\);
    \item\label{item:6}
      \(\mathbb{E}^{(n)}_{x}[\sigma^{n}\wedge\tau^{n}_{k-1}]\leq\frac{C_{1}}{k^{\kappa\alpha}}\),
      where \(C_{1}=\frac{\beta{C^{\alpha}}}{p}\).
    \end{enumerate}
  \end{lemma}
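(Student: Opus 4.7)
The plan is to derive (i) by induction on $j$ from Lemma \ref{lem:EMS_1}, and then to obtain (ii) by combining (i) with the layer-cake formula for the expectation.

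For part (i), the base case $j=1$ is exactly Lemma \ref{lem:EMS_1} applied with $t=0$, which yields $\mathbb{P}^{(n)}_x[\sigma^n\wedge\tau^n_{k-1}>\beta\varepsilon^\alpha]\leq 1-p$. For the inductive step from $j-1$ to $j$, I would apply Lemma \ref{lem:EMS_1} again, this time with $t=(j-1)\beta\varepsilon^\alpha$, to get
\[
\mathbb{P}^{(n)}_x[\sigma^n\wedge\tau^n_{k-1}>j\beta\varepsilon^\alpha] \leq (1-p)\,\mathbb{P}^{(n)}_x[\sigma^n\wedge\tau^n_{k-1}>(j-1)\beta\varepsilon^\alpha],
\]
and close the induction by using the inductive hypothesis. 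No extra work beyond Lemma \ref{lem:EMS_1} is required: its own proof already handles the only delicate point, namely that under the Markov property at time $t$, on the event $\{\sigma^n\wedge\tau^n_{k-1}>t\}$ the new starting configuration $X^{(n)}_t$ lies in $K^n$ (since $\sigma^n>t$) and still has at most $k$ distinct coordinates (since $\tau^n_{k-1}>t$ and the number of distinct coordinates is non-increasing thanks to the coalescing and consistency properties of $\mathsf{P}^{(n)}$).

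For part (ii), I would apply the layer-cake identity $\mathbb{E}[Z]=\int_0^\infty\mathbb{P}[Z>t]\,dt$ to the non-negative random variable $Z=\sigma^n\wedge\tau^n_{k-1}$, bound the tail by a step function that is constant on the consecutive intervals $[j\beta\varepsilon^\alpha,(j+1)\beta\varepsilon^\alpha)$ (using that $t\mapsto\mathbb{P}[Z>t]$ is non-increasing), and then sum the resulting geometric series using (i):
\[
\mathbb{E}^{(n)}_x[\sigma^n\wedge\tau^n_{k-1}] \leq \sum_{j=0}^{\infty}\beta\varepsilon^\alpha\,\mathbb{P}^{(n)}_x[\sigma^n\wedge\tau^n_{k-1}>j\beta\varepsilon^\alpha] \leq \beta\varepsilon^\alpha\sum_{j=0}^{\infty}(1-p)^j=\frac{\beta\varepsilon^\alpha}{p}.
\]
Substituting $\varepsilon=Ck^{-\kappa}$ yields $\mathbb{E}^{(n)}_x[\sigma^n\wedge\tau^n_{k-1}]\leq C_1 k^{-\kappa\alpha}$ with $C_1=\beta C^\alpha/p$.

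I do not anticipate any real obstacle: the result is essentially a routine consequence of iterating Lemma \ref{lem:EMS_1} and invoking the layer-cake representation of the expectation. The only ingredients one must handle carefully are the geometric factor $(1-p)$ coming from the iteration and the time-step $\beta\varepsilon^\alpha$ appearing in the partition of $[0,\infty)$; both propagate transparently through the computation.
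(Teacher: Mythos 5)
Your proposal is correct and follows essentially the same route as the paper: part (i) by iterating Lemma~\ref{lem:EMS_1} $j$ times (with the same observation that on $\{\sigma^{n}\wedge\tau^{n}_{k-1}>t\}$ the configuration $X^{(n)}_{t}$ stays in $K^{n}$ with at most $k$ distinct coordinates), and part (ii) by the layer-cake decomposition of the expectation over the intervals $[j\beta\varepsilon^{\alpha},(j+1)\beta\varepsilon^{\alpha})$ followed by summing the geometric series.
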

  \begin{proof}
    Applying Lemma~\ref{lem:EMS_1} \(j\) times, one easily obtains
    (\ref{item:5}). Now, using \eqref{item:5}, we obtain
    \begin{align*}
      \mathbb{E}^{(n)}_{x}[\sigma^{n}\wedge\tau^{n}_{k-1}]
      & = \sum_{j=0}^{\infty}
        \int_{j\beta\varepsilon^{\alpha}}^{(j+1)\beta\varepsilon^{\alpha}}
        \mathbb{P}^{(n)}_{x}[\sigma^{n}\wedge\tau^{n}_{k-1}>{t}]\,\mathrm{d}t
      \\
      & \leq \sum_{j=0}^{\infty}
        \beta\varepsilon^{\alpha}
        \times
        \mathbb{P}^{(n)}_{x}[\sigma^{n}\wedge\tau^{n}_{k-1}>{j\beta\varepsilon^{\alpha}}]
      \\
      & \leq \sum_{j=0}^{\infty}
        \beta\varepsilon^{\alpha}\times(1-p)^{j}
        \leq \frac{\beta{C}^{\alpha}}{pk^{\kappa\alpha}}.
    \end{align*}
    This proves \eqref{item:6}.
  \end{proof}

\begin{lemma}\label{lem:EMS_3}
  Let \(x\in{K^{n}}\) and \({1}\leq {m}\leq{n}\). Then
  \begin{equation}
    \mathbb{E}^{(n)}_{x}[\sigma^{n}\wedge\tau^{n}_{m}]\leq\frac{C_{2}}{m^{\kappa\alpha-1}},
  \end{equation}
  where \(C_{2}=\frac{C_{1}}{\kappa\alpha-1}\).
\end{lemma}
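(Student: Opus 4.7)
The plan is to prove Lemma~\ref{lem:EMS_3} by a telescoping argument that reduces the bound on $\sigma^{n}\wedge\tau^{n}_{m}$ to iterated applications of Lemma~\ref{lem:EMS_2}(\ref{item:6}), one for each coalescence event, together with the strong Markov property.

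First I would observe that for any $x\in K^{n}$, setting $n_{0}=\#\{x^{1},\ldots,x^{n}\}$, we have $\tau^{n}_{k}=0$ for $k\geq n_{0}$, and the sequence $(\tau^{n}_{k}:k\leq n_{0})$ is non-increasing in $k$. Hence
\begin{equation*}
  \sigma^{n}\wedge\tau^{n}_{m}
  \;=\;\sum_{k=m+1}^{n_{0}}
  \bigl[(\sigma^{n}\wedge\tau^{n}_{k-1})-(\sigma^{n}\wedge\tau^{n}_{k})\bigr],
\end{equation*}
so that
\begin{equation*}
  \mathbb{E}^{(n)}_{x}[\sigma^{n}\wedge\tau^{n}_{m}]
  \;=\;\sum_{k=m+1}^{n_{0}}\mathbb{E}^{(n)}_{x}
  \bigl[(\sigma^{n}\wedge\tau^{n}_{k-1})-(\sigma^{n}\wedge\tau^{n}_{k})\bigr].
\end{equation*}

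Next I would control each term by the strong Markov property at the stopping time $\sigma^{n}\wedge\tau^{n}_{k}$. On $\{\sigma^{n}\leq\tau^{n}_{k}\}$ the increment vanishes, so only the event $\{\sigma^{n}>\tau^{n}_{k}\}$ contributes. On that event the process $X^{(n)}_{\tau^{n}_{k}}$ lies in $K^{n}$, and by continuity of $X^{(n)}$ together with the definition of $\tau^{n}_{k}$ we have $\#\{X^{1}_{\tau^{n}_{k}},\ldots,X^{n}_{\tau^{n}_{k}}\}\leq k$. Applying Lemma~\ref{lem:EMS_2}(\ref{item:6}) with starting point $X^{(n)}_{\tau^{n}_{k}}$ yields
\begin{equation*}
  \mathbb{E}^{(n)}_{X^{(n)}_{\tau^{n}_{k}}}[\sigma^{n}\wedge\tau^{n}_{k-1}]
  \;\leq\;\frac{C_{1}}{k^{\kappa\alpha}},
\end{equation*}
so the strong Markov property gives
\begin{equation*}
  \mathbb{E}^{(n)}_{x}\bigl[(\sigma^{n}\wedge\tau^{n}_{k-1})-(\sigma^{n}\wedge\tau^{n}_{k})\bigr]
  \;\leq\;\frac{C_{1}}{k^{\kappa\alpha}}.
\end{equation*}

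Summing over $k\geq m+1$ and using the integral comparison $\sum_{k>m}k^{-\kappa\alpha}\leq\int_{m}^{\infty}t^{-\kappa\alpha}\,\mathrm{d}t=\frac{1}{(\kappa\alpha-1)m^{\kappa\alpha-1}}$ (which is finite because $\kappa\alpha>1$) produces the claimed bound with $C_{2}=C_{1}/(\kappa\alpha-1)$.

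The only delicate point is the verification that $\#\{X^{1}_{\tau^{n}_{k}},\ldots,X^{n}_{\tau^{n}_{k}}\}\leq k$ on $\{\sigma^{n}>\tau^{n}_{k}\}$; this must be read off from the continuity of trajectories, the closedness of the diagonals in $M^{n}$, and the coalescence property (TF~\ref{item:coalescing_property}) ensuring that once two coordinates meet they remain equal, so that the first time the count drops to $\leq k$ is actually attained. Everything else is bookkeeping.
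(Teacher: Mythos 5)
Your proof is correct and follows essentially the same route as the paper: the telescoping decomposition of $\sigma^{n}\wedge\tau^{n}_{m}$ into increments, the strong Markov property at $\tau^{n}_{k}$ combined with Lemma~\ref{lem:EMS_2}--(ii), and the integral comparison $\sum_{k>m}k^{-\kappa\alpha}\leq\frac{1}{(\kappa\alpha-1)m^{\kappa\alpha-1}}$. Your explicit handling of the event $\{\sigma^{n}\leq\tau^{n}_{k}\}$ and of the attainment of the count $\leq k$ at time $\tau^{n}_{k}$ is slightly more careful than the paper's one-line computation, but it is the same argument.
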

\begin{proof}
  Since
  \(\sigma^n\wedge
  \tau^{n}_{m}=\sum_{k=m+1}^{n}(\sigma^n\wedge\tau^{n}_{k-1}-\sigma^n\wedge\tau^{n}_{k})\),
  using the strong Markov property and
  Lemma~\ref{lem:EMS_2}-\eqref{item:6}, we obtain
  \begin{align*}
    \mathbb{E}^{(n)}_{x}[\sigma^{n}\wedge\tau^{n}_{m}]
    &=
      \sum_{k=m+1}^{n}
      \mathbb{E}^{(n)}_{x}[\sigma^{n}\wedge\tau^{n}_{k-1}-\sigma^{n}\wedge\tau^{n}_{k}]
    \\
    &=
      \sum_{k=m+1}^{n}
      \mathbb{E}^{(n)}_{x}\left[
      \mathbb{E}^{(n)}_{X^{(n)}_{\sigma^{n}\wedge\tau^{n}_{k}}}[\sigma^{n}\wedge\tau^{n}_{k-1}]
      \right]
    \\
    &\leq
      \sum_{k=m+1}^{\infty}  \frac{C_{1}}{k^{\kappa\alpha}}
      \leq \frac{C_{1}}{(\kappa\alpha-1)m^{\kappa\alpha-1}}.
  \end{align*}
  The lemma is proved.
\end{proof}

\begin{proof}[Proof of Proposition~\ref{prop:A_t_finite}]
  We suppose at first that \(s=0\) and set \(A_{t}=A^{K}_{0,t}\) for
  \(t\geq{0}\).
  
  Given
  \(\mathcal{A}_{0}=\sigma(\varPhi_{n}(r):n\in\mathbb{N},s_{n}\leq{r}\leq{0})\),
  the set \(A_{0}\) is at most countable. For \(n\geq{1}\), set
  \(\mathcal{P}_{n}(A_{0})\) the set of all subsets of \(A_{0}\) with
  cardinality \(n\). If
  \(\{x^{1},\ldots{},x^{n}\}\in\mathcal{P}_{n}(A_{0})\), we have that
  \((\varphi_{0,\cdot}(x^{1}),\ldots{},\varphi_{0,\cdot}(x^{n}))\) is a
  Markov process of law
  \(\mathbb{P}^{(n)}_{(x^{1},\ldots{},x^{n})}\). Then, for all \(t>0\),
  \begin{align*}
    \mathbb{P}[\#A_{t}=\infty|\mathcal{A}_{0}]
    &=
      \lim_{m\to\infty}\mathbb{P}[\#A_{t}\geq{m+1}|\mathcal{A}_{0}]
    \\
    &\leq 
      \lim_{m\to\infty}
      \lim_{n\to\infty}
      \sup_{\{x^{1},\ldots{},x^{n}\}\in\mathcal{P}_{n}(A_{0})}
      \mathbb{P}^{(n)}_{(x^{1},\ldots{},x^{n})}[\sigma^{n}\wedge\tau^{n}_{m}>t]
    \\
    &\leq
      \lim_{m\to\infty}
      \lim_{n\to\infty}
      \sup_{\{x^{1},\ldots{},x^{n}\}\in\mathcal{P}_{n}(A_{0})}
      \frac{1}{t}\times\mathbb{E}^{(n)}_{(x^{1},\ldots{},x^{n})}[\sigma^{n}\wedge\tau^{n}_{m}]
    \\
    &\leq
      \lim_{m\to\infty}
      \frac{C_{2}}{tm^{{\kappa\alpha-1}}}
      = 0
  \end{align*}
  Therefore \(\mathbb{P}[\#A_{t}=\infty]=0\) and this implies that
  (using that \(\{\#A_{t}=\infty\}\subset\{\#A_{r}=\infty\}\) for all
  \(r<t\)) a.s. for all \(t>0\), \(A_{t}\) is a finite set.

  We have thus proved that for all \(s\in\mathbb{R}\), a.s. for all
  \(t>s\), \(A^{K}_{s,t}\) is a finite set. This implies that a.s. for
  all \(s\in\mathbb{Q}\) and \(t>s\), \(A^{K}_{s,t}\) is a finite
  set. Let \(E\) be an event on which there is \(s<t\) such that
  \(\#A^{K}_{s,t}=\infty\). Then on \(E\), there is \(a\in\mathbb{Q}\)
  such that \(s<a<t\) and such that \(\#A^{K}_{a,t}=\infty\). This
  implies that
  \(\mathbb{P}[E]\leq\sum_{a\in\mathbb{Q}}\mathbb{P}[\exists{t>a},\#A^{K}_{a,t}=\infty]=0\).
\end{proof}

\begin{remark}
  When  the space \(M\) is compact, Proposition~\ref{prop:A_t_finite} shows
  that if \(M\) satisfies \(\mathcal{P}\), then
  a.s. for all \(s<t\), \(\{\varPhi_{n}(t):n\in{I^{s}}\}\) is a finite
  set, and \(\varPhi\) possesses the ICP.
\end{remark}

When \(M\) is locally compact, then it is not clear that
Proposition~\ref{prop:A_t_finite} implies that \(\varPhi\) possesses the
ICP a.s. To prove the ICP, we need an additional assumption.

\begin{proposition}
  \label{prop:icp-when-m}
  Let \((K_{l}: l\in\mathbb{N})\) be an increasing sequence of compact
  sets such that \(\cup_{l\in\mathbb{N}}K_{l}=M\).  Suppose that
  \begin{enumerate}[(i)]
  \item\label{item:24} for any \(l\in\mathbb{N}\), \(K_{l}\) satisfies
    \(\mathcal{P}\);
  \item\label{item:25} a.s. for all \(k\in\mathbb{N}\) and \(s<t\),
    there are \(l>k\) and \(r\in(s,t)\) such that for all
    \(n\in\mathbb{N}\),
    \begin{equation} \label{eq:6}
      s_{n}\leq{s}\quad\mathrm{and}\quad\varPhi_{n}(t)\in{K_{k}}
      \quad\Longrightarrow\quad
      \varPhi_{n}([r,t])\subset{}K_{l}.
    \end{equation}
  \end{enumerate}
  Then a.s. \(\varPhi\) possesses the ICP.
\end{proposition}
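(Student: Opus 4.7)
\medskip

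\noindent\textbf{Proof proposal for Proposition~\ref{prop:icp-when-m}.} The plan is to combine Proposition~\ref{prop:A_t_finite}, applied to each \(K_{l}\), with the localization condition~\eqref{item:25} to control the set of trajectories reaching a prescribed compact \(K_{k}\) at time \(t\). Informally, condition~\eqref{item:25} lets one replace the initial time \(s\) by a slightly later time \(r\) while at the same time forcing any relevant trajectory into the larger compact \(K_{l}\), at which point Proposition~\ref{prop:A_t_finite} kicks in.

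\medskip

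First I would use assumption~\eqref{item:24} together with Proposition~\ref{prop:A_t_finite}: for each \(l\in\mathbb{N}\), there is an event \(\Omega_{l}\) of full probability on which \(A^{K_{l}}_{r,t}\) is a finite set for every \(r<t\). Setting \(\Omega_{1}=\bigcap_{l\in\mathbb{N}}\Omega_{l}\), the countable intersection still has full probability, and on \(\Omega_{1}\) the set \(A^{K_{l}}_{r,t}\) is finite for all \(l\in\mathbb{N}\) and all \(r<t\) simultaneously. Let \(\Omega_{2}\) be the a.s.\ event furnished by~\eqref{item:25}. I will work on \(\Omega_{1}\cap\Omega_{2}\).

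\medskip

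Fix \(s<t\) and \(k\in\mathbb{N}\). By~\eqref{item:25} there exist \(l>k\) and \(r\in(s,t)\) such that every \(n\) with \(s_{n}\leq s\) and \(\varPhi_{n}(t)\in K_{k}\) satisfies \(\varPhi_{n}([r,t])\subset K_{l}\). Since \(s_{n}\leq s<r\), such an \(n\) automatically lies in \(I^{r}\), and the inclusion \(\varPhi_{n}([r,t])\subset K_{l}\) places \(\varPhi_{n}(t)\) in \(A^{K_{l}}_{r,t}\). Therefore
\[
\{\varPhi_{n}(t):\ n\in I^{s},\ \varPhi_{n}(t)\in K_{k}\}\subset A^{K_{l}}_{r,t},
\]
and the right-hand side is finite by the defining property of \(\Omega_{1}\). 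Because \((K_{l})_{l\in\mathbb{N}}\) is an exhaustion of \(M\) by compact sets (so that every compact subset of \(M\) is contained in some \(K_{k}\)), this finiteness in every \(K_{k}\) yields local finiteness of \(\{\varPhi_{n}(t):n\in I^{s}\}\). Since \(s<t\) was arbitrary, \(\varPhi\) possesses the ICP on the full-measure event \(\Omega_{1}\cap\Omega_{2}\).

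\medskip

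The only subtle point is quantifier management: Proposition~\ref{prop:A_t_finite} is stated ``a.s.\ for all \(r<t\)'' for a \emph{fixed} compact \(K\), and one must be careful to intersect the resulting null sets over the countable family \((K_{l})_{l\in\mathbb{N}}\) before invoking~\eqref{item:25}. Once this bookkeeping is in place, the rest of the argument is a direct translation of~\eqref{item:25} into an inclusion between the sets \(\{\varPhi_{n}(t)\in K_{k}\}\) and \(A^{K_{l}}_{r,t}\), so I do not expect any further obstacle.
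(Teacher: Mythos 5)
Your proposal is correct and follows essentially the same route as the paper: intersect the full-measure events from Proposition~\ref{prop:A_t_finite} over the countable family \((K_{l})\), then use~\eqref{item:25} to obtain the inclusion \(\{\varPhi_{n}(t):n\in I^{s}\}\cap K_{k}\subset A^{K_{l}}_{r,t}\) (the paper's \(B^{k}_{s,t}\subset A^{l}_{r,t}\)) and conclude local finiteness. The only cosmetic difference is that the paper bundles the two full-measure events into a single \(\Omega_{0}\), whereas you keep them separate; the argument is identical.
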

\begin{proof}
  For \(l\in\mathbb{N}\) and \(s<t\), we set
  \(A^{l}_{s,t}=A^{K_{l}}_{s,t}\) and
  \(B^{l}_{s,t}=\{\varPhi_{n}(t):n\in{I^{s}}\}\cap{K_{l}}\).

  Let \(\Omega_{0}\) be an event of probability one on which for all
  \(k\in\mathbb{N}\) and \(s<t\), it holds that \(A^{k}_{s,t}\) is a
  finite set and that there are \(l>k\) and \(r\in(s,t)\) such that
  \eqref{eq:6} is satisfied for all \(n\in\mathbb{N}\).
  On \(\Omega_{0}\),
  \begin{align*}
    B^{k}_{s,t}
    &\subset
      \{\varPhi_{n}(t):s_{n}\leq{s}\:\mathrm{and}\:\varPhi_{n}([r,t])\subset{K_{l}}\}
    \\
    &\subset
      \{\varPhi_{n}(t):s_{n}\leq{r}\:\mathrm{and}\:\varPhi_{n}([r,t])\subset{K_{l}}\}
      = A^{l}_{r,t}
  \end{align*}
  which is a finite set. Therefore on \(\Omega_{0}\), for all \(s<t\)
  and \(k\in\mathbb{N}\), \(B^{k}_{s,t}\) is a finite set. This implies
  that on \(\Omega_{0}\), 
  \(\{\varPhi_{n}(t):n\in{I^{s}}\}\) is locally finite for all \(s<t\).
\end{proof}

\section{Stochastic flows on metric graphs}
\label{sec:sfp_IC}

Let $(M,\rho)$ be a metric graph (metric graphs are defined in
Section~\ref{sec:notations}).  Let \(\psi^{0}\) be a stochastic flow
of measurable mappings in $M$ associated to a consistent sequence of
coalescing Feller transition functions
$(\mathsf{P}^{(n)}_{\bullet}:n\in \mathbb{N})$ on $M$.  It is assumed that
$(\mathsf{P}^{(n)}_{\bullet}:n\in \mathbb{N})$ satisfies conditions
(TF~\ref{item:consistency}), (TF~\ref{item:coalescing_property}) and
(TF~\ref{item:continuity_of_trajectories_TF3}). As in
Section~\ref{sec:strong-flow-meas}, we let
$\mathcal{D}=\{(s_n,x_n):n\in \mathbb{N}\}$ be a dense subset of
\(\mathbb{R}\times{M}\) and let $\varPhi=(\varPhi_n: n\in \mathbb{N})$
be the random sequence constructed out of \(\psi^{0}\) in
Section~\ref{sec:rand-sequ-varphi}. Then $\varPhi$
a.s. satisfies conditions (Sk~\ref{item:coalescence}) (by
(TF~\ref{item:coalescing_property})) and (Sk~\ref{item:denseness}) (by
construction). In
subsection~\ref{sec:inst_coalescence_on_metric_graph}, we show that if
\(\varPhi\) possesses the ICP a.s. then \(\varPhi\) a.s. satisfies
(Sk~\ref{item:rel_comp}), and a.s. is a skeleton. In
subsection~\ref{sec:suff-cond-icp} a sufficient condition on $\varPhi$
to satisfy the ICP a.s. is given.

\subsection{Notations}
\label{sec:notations}
A locally compact separable metric space $(M,\rho)$ is said to be a
metric graph if there are a countable set (of vertices) $V\subset M$
and a partition $\{E_j\}_{j\in J}$ of $M\setminus V$ (into edges) such
that
\begin{itemize}
\item For each \(j\in J\),
  \begin{itemize}
  \item the edge $E_j$ is an open subset of $M$;
  \item there is an isometry $e_j:(0,L_j)\to E_j$, $0<L_j\leq \infty$;
  \end{itemize}
\item for each $v\in V$,
  \begin{itemize}
  \item the set $J(v)=\{j\in J: v\in\partial E_j\}$ is non-empty and
    finite;
  \item the set $\{v\}\cup\bigcup_{j\in J(v)}E_j$ is a neighborhood of
    $v$.
  \end{itemize}
\end{itemize}
For every \(j\in{}J\), the isometry \(e_{j}\) can be extended by
continuity at \(0\) and at \(L_{j}\) (when \(L_{j}<\infty\)). By abuse
of notation, this extension will also be denoted \(e_{j}\). Then
\(e_{j}(0)\in{}V\) and \(e_{j}(L_{j})\in{}V\) when \(L_{j}<\infty\).

For a vertex \(v\in{V}\), the cardinality of $J(v)$ is denoted by $d(v)$
and is called the degree of \(V\). For a point $x\in{}M\setminus{}V$,
the degree of \(x\) is defined by \(d(x)=2\). In this case, $x=e_j(t)$
for some \(j\in{J}\) and \(t\in(0,L_{j})\), and the sets
$e^{-1}_j((0,t))$ and $e^{-1}_j((t,L_j))$ will be viewed as the two
adjacent edges to $x$.

Local compactness of the space \((M,\rho)\) implies \(d(v)<\infty\) for
every \(v\in{}V\).  Without loss of generality we will assume that $M$ has
no loops, i.e. if $L_j<\infty$ then $\partial E_j$ contains exactly two
vertices\footnote{When there is a
  loop one can add a vertex of degree $2$ on it.}.

When \(x\) and \(y\) are two points belonging to the same edge
\(\overline{E_{j}}\), then there is \((r,s)\in[0,L_{j}]^{2}\) such that
\(x=e_{j}(r)\) and \(y=e_{j}(s)\). When \(r\leq{}s\) will denote by
\([x,y]\) (respectively \([x,y)\), \((x,y]\) and \((x,y)\)) the set
\(e_{j}^{-1}[r,s]\) (respectively \(e_{j}^{-1}[r,s)\),
\(e_{j}^{-1}(r,s]\) and \(e_{j}^{-1}(r,s)\)). When \(r>s\), we will
denote by \([x,y]\) (respectively \([x,y)\), \((x,y]\) and \((x,y)\))
the set \([y,x]\) (respectively \((y,x]\), \([y,x)\) and \((y,x)\)).

A bounded connected open set \(\mathcal{O}\subset{M}\) that contains at
most one vertex \(v\in{V}\) and whose boundary contains no vertices
\(v\in{V}\) (\(\partial\mathcal{O}\cap{V}=\emptyset\)) will be called a
simple open set. The closure of a simple open set is compact and will be
called a simple compact set.

Let \(\mathcal{O}\) be a simple open set.  There are two possibilities:
\begin{itemize}
\item \(V\cap\mathcal{O}=\emptyset\) and there are \(j\in{J}\) and
  \((u_{1},u_{2})\in{E_{j}^{2}}\) such that
  \(\mathcal{O}=(u_{1},u_{2})\). In this case \(\mathcal{O}\) will be
  called a simple neighborhood of $x$ for any $x\in \mathcal{O}$ and
  will be denoted by $U_{x}\big(\{u_{1},u_{2}\}\big)$.
\item \({V}\cap\mathcal{O}=\{v\}\) and for any \(j\in{J(v)}\), there is
  \(u_{j}\in{E_{j}}\) such that
  \begin{displaymath}
    \mathcal{O}=\cup_{j\in{}J(v)}[v,u_{j}).
  \end{displaymath}
  In this case $\mathcal{O}$ will be called a simple neighborhood of $v$
  will be denoted by \(U_{v}\big(\{u_{j}:1\leq{j}\leq{d(v)}\}\big)\).
\end{itemize}
Note that for any $x\in M$ and all sufficient small $\varepsilon>0$,
the ball $B(x,\varepsilon)$ is a simple neighborhood of $x$. For all
\(x\in M\), we denote by \(E_{1}^{x},\ldots,E_{d(x)}^{x}\) the edges
adjacent to \(x\).

\subsection{Instantaneous coalescence property on metric graphs}
\label{sec:inst_coalescence_on_metric_graph}

\begin{theorem}
  \label{thm:inst_coalescence_metric_graph}
  On a metric graph, if \(\varPhi\) possesses the ICP a.s., then \(\varPhi\) is
  a skeleton a.s.
\end{theorem}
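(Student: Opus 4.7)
Since $\varPhi$ a.s.\ satisfies (Sk~\ref{item:coalescence}) (by (TF~\ref{item:coalescing_property})) and (Sk~\ref{item:denseness}) (by construction), as already observed in the introduction to Section~\ref{sec:sfp_IC}, the task reduces to establishing (Sk~\ref{item:rel_comp}) on an event of full probability. By $\sigma$-compactness of $\mathbb{R} \times M$, it is enough to fix a compact set of the form $L = [a, b] \times K$ (with $K$ compact in $M$) and show that $\mathcal{F}_L := \{\varPhi_n[s, \infty) : n \in I^s,\; (s, \varPhi_n(s)) \in L\}$ is a.s.\ relatively compact in $X$, since a countable exhaustion by such $L$ suffices to handle all compacts simultaneously.

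I would proceed by sequential extraction. Given a sequence $f_k = \varPhi_{n_k}[s_k, \infty)$ in $\mathcal{F}_L$, compactness of $L$ yields a subsequence with $(s_k, \varPhi_{n_k}(s_k)) \to (s^*, x^*) \in L$. For each rational $t > s^*$, choose $\epsilon \in \mathbb{Q} \cap (0, t - s^*)$; for $k$ large one has $s_k \le s^* + \epsilon$, so $n_k \in I^{s^*+\epsilon}$ and therefore $\varPhi_{n_k}(t)$ lies in $\{\varPhi_n(t) : n \in I^{s^*+\epsilon}\}$, which by the a.s.\ ICP is locally finite. Combining this with a Borel--Cantelli argument based on Lemma~\ref{lem:stronger_TF3} (applied to the countable family $(\varPhi_n)_n$ with starting points in a fixed compact) to confine $\varPhi_{n_k}(t)$ to a bounded region, the local finiteness becomes genuine finiteness and a subsequence can be chosen along which $\varPhi_{n_k}(t)$ is eventually constant. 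A diagonal extraction over a countable dense set of $t > s^*$ then produces a subsequence $(n_{k_j})_j$ for which $\varPhi_{n_{k_j}}(t)$ stabilizes at every rational $t > s^*$. By coalescence (Sk~\ref{item:coalescence}), these stabilized values are compatible and glue into a single continuous trajectory $f^*:(s^*, \infty)\to M$ with $f_{k_j}[t,\infty) = f^*[t,\infty)$ for every sufficiently large $j$ (depending on $t$).

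The principal obstacle is to establish continuity of $f^*$ at $s^*$, i.e.\ $f^*(s^*+) = x^*$, and equivalently the convergence $f_{k_j} \to f^*$ in the metric of $X$ on a full neighborhood of $s^*$. This is where the metric graph hypothesis becomes essential. For $r>0$ small enough, $B(x^*, r)$ is a simple neighborhood of $x^*$ whose topological boundary consists of at most $d(x^*)$ points; any trajectory escaping $B(x^*, r)$ on $[s_{k_j}, s^* + \eta]$ must first cross one of these finitely many exit points, and by the a.s.\ ICP only finitely many distinct such excursions can occur during that short window. Combined with the short-time escape estimate of Lemma~\ref{lem:stronger_TF3} with starting points in $K$, this combinatorial-probabilistic control should permit a further subsequence extraction along which every $\varPhi_{n_{k_j}}$ remains in $B(x^*, r)$ on $[s_{k_j}, s^* + \eta]$. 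This yields the uniform small oscillation of $f_{k_j}$ near $s^*$ needed to conclude convergence in $X$ and thereby (Sk~\ref{item:rel_comp}).
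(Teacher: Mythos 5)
Your reduction to (Sk~\ref{item:rel_comp}) and the first half of your extraction argument are sound: using the ICP to stabilize $\varPhi_{n_k}(t)$ at each fixed rational $t>s^*$ and gluing by (Sk~\ref{item:coalescence}) is essentially the same mechanism the paper uses to control the trajectories after a short initial window (there, via a finite family $\{\varPhi_{n_k^l}\}$ at time $s_k+\delta_k$ and the Ascoli--Arzela criterion of Theorem~\ref{lem:ascoli-arzela}). But the step you yourself flag as the principal obstacle --- uniform small oscillation of $\varPhi_{n_{k_j}}$ on $[s_{k_j},s^*+\eta]$ --- is a genuine gap, and the two tools you propose for it do not close it. The ICP asserts local finiteness of $\{\varPhi_n(t):n\in I^s\}$ for \emph{fixed} $s<t$; it says nothing about the number of crossings of the finitely many boundary points of $B(x^*,r)$ at \emph{varying} times, nor about the oscillation of a trajectory immediately after an arbitrary intermediate time $s>s_n$ (a single trajectory could in principle make many excursions without ever producing two distinct values at one common time). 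Likewise, Lemma~\ref{lem:stronger_TF3} is a probabilistic $o(t)$ bound for one starting point; a Borel--Cantelli argument over the countable index $n$ only controls oscillation near the birth times $s_n$, whereas here you must control oscillation of $\varPhi_n$ near an arbitrary later time $s$ with $(s,\varPhi_n(s))\in L$, and these pairs $(s,n)$ range over an uncountable set, so no union bound is available.

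The paper closes exactly this gap deterministically, via Lemma~\ref{lem:inst-coal-prop}: using the a.s.\ denseness of $\{\varPhi_n(t):s_n<t\}$ at every time $t$ (Lemma~\ref{lem:denseness}, which is where Lemma~\ref{lem:stronger_TF3} is actually consumed), one selects $d(x^*)$ skeleton trajectories born strictly before the time window that stay in the annulus $B(x^*,\varepsilon)\setminus B(x^*,3\varepsilon/4)$ throughout $[s^*-\delta,s^*+\delta]$ and bound a moving simple neighborhood $U^{\varepsilon}_{s^*,x^*}(t)$. Any $\varPhi_n$ with $(s,\varPhi_n(s))$ in the corresponding space-time box is then \emph{trapped}: to leave $U^{\varepsilon}_{s^*,x^*}(t)$ it would have to meet a barrier trajectory, and by (Sk~\ref{item:coalescence}) it then coalesces with it and remains inside $B(x^*,\varepsilon)$. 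This is the purely topological use of the metric-graph structure that your "finitely many exit points plus ICP" heuristic is reaching for; I recommend you replace that heuristic by the barrier construction (or prove an equivalent trapping statement) before the extraction argument can be considered complete.
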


Before proving this theorem, let us prove the following simple
lemma. 
\begin{lemma}\label{lem:inst-coal-prop}
  Almost surely, \(\varPhi\) satisfies the following property:
  \begin{itemize}
  \item\label{item:27} For all \(x\in{M}\), \(\varepsilon>0\) and \(s\in\mathbb{R}\),
    with \(\varepsilon<\rho(x,V\setminus{}\{x\})\), there are
    \(\delta=\delta_{s,x}^{\varepsilon}>0\) and
    \(\{n_{1},\ldots{},n_{d(x)}\}\subset{}I^{s-\delta}\) such that
    \begin{enumerate}[(i)]
    \item\label{item:7}
      \(\varPhi_{n_{j}}([s-\delta,s+\delta])\subset{}E^{x}_{j}\cap
      B(x,\varepsilon)\setminus B(x,3\varepsilon/4)\) for all
      \(j\in\{1,\ldots,d(x)\}\).
    \item\label{item:8} For all \(t\in[s-\delta,s+\delta]\),
      \(U_{s,x}^{\varepsilon}(t):=U_{x}\big(\{\varPhi_{n_{1}}(t),\ldots,\varPhi_{n_{d(x)}}(t)\}\big)\)
      is a simple neighborhood of \(x\), such that
      \(B(x,\frac{3\varepsilon}{4})\subset{}U_{s,x}^{\varepsilon}(t)\subset{}B(x,\varepsilon)\).
    \end{enumerate}
  \end{itemize}
\end{lemma}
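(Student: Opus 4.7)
The plan is to establish the property almost surely for one fixed admissible triple \((s,x,\varepsilon)\) by combining the denseness of \(\mathcal{D}\) with the short-time regularity estimate of Lemma~\ref{lem:stronger_TF3}, and then to extend to all triples via a countable discretization together with a perturbation argument that exploits the edge structure of the metric graph.

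For a fixed triple \((s,x,\varepsilon)\) with \(\varepsilon < \rho(x, V\setminus\{x\})\), choose for each \(j \in \{1,\dots,d(x)\}\) a point \(y_j\) on the edge \(E_j^x\) at distance \(7\varepsilon/8\) from \(x\); the constraint on \(\varepsilon\) ensures that \(\overline{B}(y_j, \varepsilon/8) \subset E_j^x \cap \big(B(x,\varepsilon)\setminus B(x, 3\varepsilon/4)\big)\). Set \(r=\varepsilon/16\) and \(\delta_k = 2^{-k}\). By the denseness of \(\mathcal{D}\) in \(\mathbb{R}\times M\), for every \(k\in\mathbb{N}\) and every \(j\) one may pick an index \(n_{j,k}\) with \(s_{n_{j,k}} \in (s-2\delta_k, s-\delta_k)\) and \(x_{n_{j,k}} \in B(y_j, r)\); consider the event
\[
A_{j,k} = \Bigl\{\sup_{t \in [s_{n_{j,k}},\, s+\delta_k]} \rho\bigl(\varPhi_{n_{j,k}}(t), x_{n_{j,k}}\bigr) \leq r \Bigr\}.
\]
Since \(\varPhi_{n_{j,k}}\) is distributed as the one-point motion \(\mathbb{P}^{(1)}_{x_{n_{j,k}}}\), Lemma~\ref{lem:stronger_TF3} applied with \(K = \overline{B}(y_j, r)\) and radius \(r\) yields \(\mathbb{P}(A_{j,k}^c) = o(\delta_k)\), which is summable. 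By Borel--Cantelli, almost surely for each \(j\) the event \(A_{j,k}\) holds for all but finitely many \(k\); picking a random \(k^* < \infty\) such that \(A_{j,k^*}\) holds for every \(j\) and setting \(\delta = \delta_{k^*}\), \(n_j = n_{j,k^*}\) yields \(n_j \in I^{s-\delta}\) and \(\varPhi_{n_j}([s-\delta, s+\delta]) \subset B(y_j, 2r) = B(y_j, \varepsilon/8)\), establishing~(i). Property~(ii) is then immediate: each \(\varPhi_{n_j}(t)\) lies on the correct edge \(E_j^x\) at distance strictly between \(3\varepsilon/4\) and \(\varepsilon\) from \(x\), so \(U_{s,x}^{\varepsilon}(t)\) is a simple neighborhood of \(x\) satisfying \(B(x, 3\varepsilon/4) \subset U_{s,x}^{\varepsilon}(t) \subset B(x,\varepsilon)\).

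To obtain the uniform almost-sure statement in \((s,x,\varepsilon)\), repeat the construction with \(r = \varepsilon/200\) to get the stronger conclusion \(\varPhi_{n_j}([s-\delta, s+\delta]) \subset B(y_j, \varepsilon/100)\). Let \(\mathcal{X}\) be a countable dense subset of \(M\) containing \(V\) and a dense subset of each edge, and let \(\mathcal{D}_0\) consist of the admissible triples in \(\mathbb{Q}\times\mathcal{X}\times\mathbb{Q}_+^*\). A countable intersection yields a full-measure event on which the strong property holds at every triple of \(\mathcal{D}_0\). Any admissible \((s,x,\varepsilon)\) is then handled by picking a close \((q, x_0, p) \in \mathcal{D}_0\) with \(x_0\) on the same edges as \(x\) (taking \(x_0 = x\) when \(x\in V\)); the strengthened property at \((q,x_0,p)\) supplies trajectories within \(p/100\) of \(y_j(x_0,p)\), which by the triangle inequality lie in the annulus associated to \((x,\varepsilon)\) once \((q,x_0,p)\) is sufficiently close and \(\delta\) is shrunk inside \([q-\delta_0, q+\delta_0]\). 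The main obstacle is precisely this extension step: the annuli \(B(x,\varepsilon)\setminus B(x, 3\varepsilon/4)\) for different parameters do not admit any nontrivial inclusions, so uniformity cannot be obtained by directly comparing annuli; the key trick is to concentrate trajectories near the midpoint \(y_j\) rather than merely into the annulus, leaving enough room to absorb the perturbation from passing to a nearby discretized triple.
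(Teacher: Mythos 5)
Your fixed-triple argument is correct, and your spatial perturbation (concentrating trajectories near the midpoints \(y_j\) so that nearby discretized annuli still land inside the target annulus) is sound. The genuine gap is in the discretization of the \emph{time} variable \(s\). Your Borel--Cantelli argument produces, for each rational \(q\) (and each \((x_0,p)\) in the countable spatial net), a \emph{random} window half-width \(\delta_0=\delta^{p}_{q,x_0}(\omega)>0\) with no lower bound. To transfer the property to an irrational \(s\) you need a rational \(q\) with \(|s-q|<\delta^{p}_{q,x_0}(\omega)\), but ``\((q,x_0,p)\) sufficiently close'' is circular here: as \(q\to s\) the windows \(\delta^{p}_{q,x_0}(\omega)\) may shrink faster than \(|s-q|\). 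Equivalently, the set of times \(s\) at which the property holds is open and a.s. contains all rationals, but an open set containing \(\mathbb{Q}\) need not be all of \(\mathbb{R}\) (and a Fubini argument only shows its complement is Lebesgue-null, not empty). Nothing in your construction rules this out, so the ``for all \(s\in\mathbb{R}\)'' quantifier is not obtained.

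The paper sidesteps this entirely by invoking Lemma~\ref{lem:denseness}, which is exactly the uniform-in-time statement you are missing: almost surely, \emph{for all} \(t\in\mathbb{R}\) the set \(\{\varPhi_n(t):s_n<t\}\) is dense in \(M\). (Its proof is the place where the union bound over a grid of \(\sim\delta^{-1}(b-a)\) time intervals, each failing with probability \(o(\delta)\) by Lemma~\ref{lem:stronger_TF3}, is carried out once and for all.) On that full-measure event the present lemma is purely deterministic: for any \(s\), pick \(n_j\) with \(s_{n_j}<s\) and \(\rho(\varPhi_{n_j}(s),y_j)<\varepsilon/8\) — note the closeness is imposed at time \(s\), not at the birth time \(s_{n_j}\) — and then continuity of that single path \(\varPhi_{n_j}\) supplies \(\delta_j\); take \(\delta=\inf_j\delta_j\). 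To repair your proof, replace the pointwise-in-\(s\) Borel--Cantelli step by an appeal to Lemma~\ref{lem:denseness} (or reproduce its covering argument to get uniformity in \(s\) over compact time intervals); the rest of your write-up, including the discretization in \((x,\varepsilon)\), then becomes unnecessary.
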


\begin{proof}
  Let \(\Omega^{0}\) be an event of probablility one on which
  \(\varPhi\) satisfies (SK~\ref{item:coalescence}),
  (SK~\ref{item:denseness}) and on which it holds that for all
  \(t\in\mathbb{R}\) the set \(\{\varPhi_{n}(t):s_{n}<t\}\) is dense
  in \(M\). Lemma~\ref{lem:denseness} implies that such event
  exists. We now check that the property claimed in the lemma is
  satisfied by \(\varPhi\) on \(\Omega^{0}\). Let us fix \(x\in{M}\),
  \(\varepsilon>0\) and \(s\in\mathbb{R}\), with
  \(\varepsilon<\rho(x,V\setminus\{x\})\).

  Let us also fix \(j\in\{1,\ldots,d(x)\}\) and let \(y_{j}\in E_{j}^{x}\) be
  such that \(\rho(y_{j},x)=\frac{7\varepsilon}{8}\). Then there is \(n_{j}\)
  such that \(s_{n_{j}}<s\) and
  \(\rho(\varPhi_{n_{j}}(s),y_{j})<\frac{\varepsilon}{8}\). The mapping
  \(\varPhi_{n_{j}}\) being continuous, there is \(\delta_{j}>0\) such that
  \(s_{n_{j}}<s-\delta_{j}\) and such that
  \(\varPhi_{n_{j}}([s-\delta_{j},s+\delta_{j}])\subset{}B(y_{j},\frac{\varepsilon}{8})\subset
  E_{j}^{x}\cap B(x,\varepsilon)\setminus
  B(x,3\varepsilon/4)\). Taking \(\delta=\inf_{j}\delta_{j}\),
  this proves~\eqref{item:7}, and~\eqref{item:8} is a straightforward
  consequence of~\eqref{item:7}.
\end{proof}

\begin{proof}[Proof of Theorem~\ref{thm:inst_coalescence_metric_graph}]
  Let us place on an event \(\Omega^{0}\) of probability one on which
  \(\varPhi\) possesses the ICP and
  satisfies~(Sk~\ref{item:coalescence}),~(Sk~\ref{item:denseness}) and
  the property given in Lemma~\ref{lem:inst-coal-prop}. In order to
  prove that \(\varPhi\) is a skeleton on \(\Omega^{0}\), it remains
  to verify that for any compact $L\subset \mathbb{R}\times M$, the
  set
  $\mathcal{F}_L=\{\varPhi_n[s,\infty): n\in I^s, (s,\varPhi_n(s))\in
  L\}$ is relatively compact in $X$. This will be done using
  Theorem~\ref{lem:ascoli-arzela} given below, which is a version of the
  Ascoli-Arzela theorem for the space $X$.

  Let $L\subset \mathbb{R}\times M$ be a compact set. 
  Item~(\ref{item:3}) of Theorem~\ref{lem:ascoli-arzela} is satisfied by
  \(\mathcal{F}_{L}\) since for all \(f\in\mathcal{F}_{L}\), there is
  \((n,s)\) such that \(n\in{I^{s}}\), \((s,\varPhi_{n}(s))\in{L}\) and
  \(f=\varPhi_{n}[s,\infty)\), which implies that
  \((i(f),f(i(f)))=(s,\varphi_{n}(s))\in{L}\), and \(L\) is compact.

  We now verify that item~(\ref{item:4}) of
  Theorem~\ref{lem:ascoli-arzela} is satisfied by \(\mathcal{F}_{L}\)
  for all \(\omega\in\Omega_{0}\).  We fix \(\varepsilon>0\) and let
  \(C<\infty\) be such that \(s\leq{C}\) for all \((s,x)\in{L}\).  For
  any $x\in M$, set
  $\varepsilon(x)=\varepsilon\wedge\frac{1}{2}\rho(x,V\setminus
  \{x\})$. Lemma~\ref{lem:inst-coal-prop} implies that on
  \(\Omega_{0}\), for any \((s,x)\in{}\mathbb{R}\times{}M\) there exist
  $\delta^{\varepsilon(x)}_{s,x}>0$ and a simple neighborhood
  \(U^{\varepsilon(x)}_{s,x}(t)\) of \(x\) such that
  \(B(x,\frac{3\varepsilon(x)}{4})\subset{}U_{s,x}^{\varepsilon(x)}(t)\subset{}B(x,\varepsilon(x))\)
  for all
  \(t\in[s-\delta_{s,x}^{\varepsilon(x)},s+\delta_{s,x}^{\varepsilon(x)}]\). The
  set
  \begin{equation*}
    W_{s,x}^{\varepsilon(x)}=
    \left[s-\frac{\delta_{s,x}^{\varepsilon(x)}}{2},s+\frac{\delta^{\varepsilon(x)}_{s,x}}{2}\right]\times
    B\left(x,\frac{3\varepsilon(x)}{4}\right)
  \end{equation*}
  is a neighborhood of $(s,x)\in\mathbb{R}\times{M}$ and for all
  \((t,y)\in{}W_{s,x}^{\varepsilon(x)}\), we have that
  \(y\in{}U^{\varepsilon(x)}_{s,x}(t)\).

  The set \(L\) being compact, there exists a finite covering of \(L\):
  $L\subset\cup^m_{k=1}W_{s_{k},x_{k}}^{\varepsilon(x_{k})}.$ By the
  ICP, for each $k\in \{1,\ldots,m\}$, setting
  \(\delta_{k}=\delta^{\varepsilon(x_k)}_{s_{k},x_{k}}\), the set
  $$
  \left\{\varPhi_n(s_{k}+\delta_{k}): n\in
    I^{s_k+\frac{\delta_{k}}{2}}\right\}\cap B(x_{k},\varepsilon)
  $$
  is finite. So, for every \(k\), there is a finite family
  \(\{n_{k}^{l}:1\leq{l}\leq{N_{k}}\}\subset{}I^{s_{k}+\frac{\delta_{k}}{2}}\)
  such that
  $$
  \left\{\varPhi_n(s_k+\delta_{k}):n\in{}I^{s_k+\frac{\delta_{k}}{2}}\right\}\cap{}B(x_k,\varepsilon)=
  \{\varPhi_{n_{k}^{l}}(s_k+\delta_{k}):1\leq{l}\leq{N_{k}}\}.
  $$
  We will set \(s_{k}^{l}=i(\varPhi_{n_{k}^{l}})\) and
  \(C_{k}=C\vee{(s_{k}+\delta_{k})}\). Then
  \(s_{k}^{l}\leq{s_{k}+\frac{\delta_{k}}{2}}\leq{C_{k}}\). The family
  of mappings \(\{\varPhi_{n_{k}^{l}}\}\) being finite, this family is
  uniformly equicontinuous and so there exists \(\alpha>0\) such that
  for all \((k,l)\) and \((r_{1},r_{2})\in[s_{k}^{l},C_{k}]^{2}\) with
  \(|r_{2}-r_{1}|<\alpha\) , we have
  \(\rho(\varPhi_{n_{k}^{l}}(r_{1}),\varPhi_{n_{k}^{l}}(r_{2}))<\varepsilon\).

  Let now \(s\in\mathbb{R}\) and $n\in I^s$ be such that
  $(s,\varPhi_n(s))\in L$. Then
  $(s,\varPhi_n(s))\in W^{\varepsilon(x_k)}_{s_k,x_k}$ for some
  $k\in\{1,\ldots,m\}$. It follows that
  $|s-s_k|\leq\frac{\delta_{k}}{2}$ and
  $\varPhi_n(s)\in U^{\varepsilon}_{s_k,x_k}(s)$. Then
  \(n\in{}I^{s_{k}+\frac{\delta_{k}}{2}}\).
  \begin{itemize}
  \item Using (Sk~\ref{item:coalescence}) and the fact that
    \(U^{\varepsilon(x_k)}_{s_{k},x_{k}}(t)\) is a simple neighborhood
    of \(x_{k}\) for all \(t\in[s,s_{k}+\delta_{k}]\), we have that for
    all $t\in [s,s_k+\delta_{k}]$,
    \begin{equation*}
      \varPhi_n(t)\in U^{\varepsilon(x_k)}_{s_k,x_k}(t)\subset B(x_k,\varepsilon(x_k))\subset B(x_k,\varepsilon).
    \end{equation*}
    This implies that for all
    \((r_{1},r_{2})\in[s,s_{k}+\delta_{k}]^{2}\),
    \begin{equation*}
      \rho(\varPhi_{n}(r_{1}),\varPhi_{n}(r_{2})\leq{2\varepsilon}.
    \end{equation*}
  \item Since \(n\in{}I^{s_{k}+\frac{\delta_{k}}{2}}\) and
    \(\varPhi_{n}(s_{k}+\delta_{k})\in{}B(x_{k},\varepsilon)\), we have
    that
    \(\varPhi_{n}(s_{k}+\delta_{k})=\varPhi_{n_{k}^{l}}(s_{k}+\delta_{k})\)
    for some \(l\). Hence, using (Sk~\ref{item:coalescence}),
    \begin{equation*}
      \varPhi_n[s_k+\delta_{k},\infty)=\varPhi_{n^k_l}[s_k+\delta_{k},\infty).
    \end{equation*}
    for some $l\in \{1,\ldots,N_{k}\}$. This implies that for all
    \((r_{1},r_{2})\in[s_{k}+\delta_{k},C_{k}]^{2}\) such that
    \(|r_{2}-r_{1}|<\alpha\),
    \begin{equation*}
      \rho(\varPhi_{n}(r_{1}),\varPhi_{n}(r_{2})\leq{\varepsilon}.
    \end{equation*}
  \end{itemize}
  Combining these two items, we obtain that for all
  \((r_{1},r_{2})\in[s,C]\) such that \(|r_{2}-r_{1}|<\alpha\),
  \begin{equation*}
    \rho(\varPhi_{n}(r_{1}),\varPhi_{n}(r_{2})\leq{3\varepsilon}.
  \end{equation*}
  Item~\eqref{item:4} of Theorem~\ref{lem:ascoli-arzela} is verified
  and, by the Ascoli-Arzela theorem, the family \(\mathcal{F}_{L}\) is
  relatively compact in $X$ for every compact \(L\) and the Theorem is
  proved.
\end{proof}

\begin{theorem}[Ascoli-Arzela theorem]\label{lem:ascoli-arzela}
  A family of mappings \(\mathcal{F}\subset{}X\) is relatively compact
  if and only if \(\mathcal{F}\) is uniformly equicontinuous, i.e. if
  and only if the following items are satisfied:
  \begin{enumerate}[(i)]
  \item \label{item:3} \(\{(i(f),f(i(f))):f\in\mathcal{F}\}\) is
    relatively compact;
  \item \label{item:4} For all \(\varepsilon>0\) and
    \(C\geq\sup\{i(f):f\in\mathcal{F}\}\), there is \(\alpha>0\) such
    that for all \(f\in\mathcal{F}\) and all
    \((r_{1},r_{2})\in[i(f),C]^{2}\) such that \(|r_{2}-r_{1}|<\alpha\),
    we have \(\rho(f(r_{1}),f(r_{2}))<\varepsilon\).
  \end{enumerate}
\end{theorem}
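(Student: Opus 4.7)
The plan is to reduce the assertion to the classical Ascoli-Arzela theorem on \(C(\mathbb{R}:M)\) via the isometry \(\iota:X\to X'\), \(f\mapsto(i(f),e(f))\), introduced in Section~\ref{sec:notation_skeleton}. Since \(X'\) is closed in \(\mathbb{R}\times C(\mathbb{R}:M)\), a set \(\mathcal{F}\subset X\) is relatively compact if and only if \(\iota(\mathcal{F})\) is relatively compact in \(X'\), and this holds if and only if both \(\{i(f):f\in\mathcal{F}\}\) is bounded in \(\mathbb{R}\) and \(\{e(f):f\in\mathcal{F}\}\) is relatively compact in \(C(\mathbb{R}:M)\) (with the topology of uniform convergence on compact subsets).

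For necessity of (\ref{item:3}) and (\ref{item:4}), I would note that if \(\mathcal{F}\) is relatively compact then the continuous map \(f\mapsto (i(f),f(i(f)))=(i(f),e(f)(i(f)))\) yields (\ref{item:3}). For (\ref{item:4}), relative compactness of \(\{e(f)\}\) in \(C(\mathbb{R}:M)\) implies uniform equicontinuity on every compact interval \([-T,T]\); choosing \(T\geq C\) and restricting to the subinterval \([i(f),C]\), where \(e(f)=f\), yields (\ref{item:4}).

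The substantive direction is the converse. Assume (\ref{item:3}) and (\ref{item:4}). By (\ref{item:3}), the starting times lie in some bounded interval \([a,b]\). I would then verify the two hypotheses of classical Ascoli-Arzela for \(\{e(f)\}\) on every compact \([-T,T]\) with \(T\geq b\). For pointwise relative compactness, fix \(t\in[-T,T]\), apply (\ref{item:4}) with \(\varepsilon=1\) and \(C=T\vee b\) to obtain \(\alpha>0\), and chain the bound \(\rho(f(r_1),f(r_2))<1\) along a subdivision of \([i(f),t\vee i(f)]\) of mesh \(<\alpha\): this bounds \(\rho(e(f)(t),f(i(f)))\) by a constant depending only on \(T,a,b,\alpha\). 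Combined with (\ref{item:3}) and the standing assumption that bounded sets in \((M,\rho)\) are relatively compact, we conclude \(\{e(f)(t):f\in\mathcal{F}\}\) is relatively compact in \(M\).

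For uniform equicontinuity of \(\{e(f)\}\) on \([-T,T]\), given \(\varepsilon>0\), take \(\alpha\) from (\ref{item:4}) with \(C=T\vee b\) and consider \(r_1\leq r_2\) in \([-T,T]\) with \(r_2-r_1<\alpha\). Three cases arise: both \(\leq i(f)\) (so \(e(f)\) is constant on \([r_1,r_2]\)); both \(\geq i(f)\) (so (\ref{item:4}) applies directly); and \(r_1<i(f)<r_2\), in which case \(e(f)(r_1)=f(i(f))\), \(e(f)(r_2)=f(r_2)\), and \(r_2-i(f)<\alpha\), so (\ref{item:4}) still applies. The main (mild) obstacle is precisely this case-split at \(i(f)\): one must check that the constant extension on \((-\infty,i(f)]\) glues with the equicontinuity on \([i(f),C]\) with a modulus uniform in \(f\). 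Once this is done, Ascoli-Arzela on \(C(\mathbb{R}:M)\) gives relative compactness of \(\{e(f)\}\), and hence of \(\mathcal{F}\) in \(X\) via the isometry \(\iota\).
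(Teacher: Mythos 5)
Your proof is correct. The paper states Theorem~\ref{lem:ascoli-arzela} without proof, so there is no argument to compare against; your reduction via the isometry \(f\mapsto(i(f),e(f))\) onto the closed subset \(X'\subset\mathbb{R}\times C(\mathbb{R}:M)\), followed by the classical Ascoli--Arzela theorem on each \([-T,T]\), is the natural route and is carried out soundly. The two points that actually need care --- obtaining pointwise relative compactness by chaining condition~(\ref{item:4}) with \(\varepsilon=1\) from the relatively compact set \(\{f(i(f))\}\) (using the standing convention that bounded subsets of \((M,\rho)\) are relatively compact), and the case split at \(i(f)\) showing that the constant extension on \((-\infty,i(f)]\) glues with~(\ref{item:4}) to give a modulus of continuity uniform in \(f\) --- are both handled correctly.
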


\subsection{Sufficient condition for the ICP on a metric graph}
\label{sec:suff-cond-icp}
In this section we give a sufficient condition on $\mathsf{P}^{(2)}_{\bullet}$,
the Feller transition function of the two-point motion of \(\psi^{0}\),
under which \(\varPhi\) possesses the ICP a.s.

Consider the following condition: 
\begin{enumerate}[(TF~1)]\setcounter{enumi}{4}
\item \label{item:tf_icp} For any simple compact $K\subset M$ there are
  constants $\beta,p>0$ such that for all \((x,y)\in {K}^{2}\) and
  \(\varepsilon>0\),
  \begin{equation}
    \label{eq:4_17}
    \rho(x,y)\leq \varepsilon\: \Longrightarrow{}\:
    \mathbb{P}^{(2)}_{(x,y)}[\sigma\wedge\tau>\beta\varepsilon^{2}]\leq{1-p}
  \end{equation}
  where \(\sigma=\inf\{t:(X_{t},Y_{t})\not\in{K}^{2}\}\) and
  \(\tau=\inf\{t:X_{t}=Y_{t}\}\);

\end{enumerate}

\begin{theorem}
  \label{thm:metric_graph_instantaneous_coalescence}
  Assume that the sequence $(\mathsf{P}^{(n)}_{\bullet}:n\in \mathbb{N})$ satisfies
  conditions (TF~\ref{item:consistency}),
  (TF~\ref{item:coalescing_property}),
  (TF~\ref{item:continuity_of_trajectories_TF3}) and
  (TF~\ref{item:tf_icp}). Then a.s. \(\varPhi\) is a skeleton that
  possesses the ICP.
\end{theorem}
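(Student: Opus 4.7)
The plan is to apply Proposition~\ref{prop:icp-when-m} to establish that $\varPhi$ possesses the ICP almost surely, after which Theorem~\ref{thm:inst_coalescence_metric_graph} immediately yields that $\varPhi$ is almost surely a skeleton. I would fix an increasing exhaustion $(K_l)_{l\geq 1}$ of $M$ by compact sets with $K_l\subset\mathrm{int}(K_{l+1})$ and $\bigcup_l K_l=M$, for instance the closed balls $K_l=\overline{B}(x_0,l)$ around a fixed reference point, each of which contains only finitely many vertices and pieces of edges of $M$.

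To verify condition~(i) of Proposition~\ref{prop:icp-when-m}, I would take $\alpha=2$ and $\kappa=1$ in the definition of property~$\mathcal{P}$, so that $\alpha\kappa=2>1$. Property~(P\ref{item:22}) reduces to the hypothesis~(TF~\ref{item:tf_icp}): since two points of $K_l$ that are $\varepsilon$-close for $\varepsilon$ small enough must lie in a common simple compact, (TF~\ref{item:tf_icp}) applied to that simple compact together with a localization argument yields the desired bound for $K_l$. Property~(P\ref{item:23}) follows from the one-dimensional structure of $M$: $K_l$, being bounded, is contained in a finite union of closed arcs of total length $L_l<\infty$, so the pigeonhole principle applied to each arc shows that any set of $n$ distinct points in $K_l$ contains two at distance at most $C_l/n$. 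Proposition~\ref{prop:A_t_finite} then applies to every $K_l$ and yields that almost surely $A^{K_l}_{s,t}$ is finite for all $l$ and all $s<t$.

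The main obstacle is verifying condition~(ii) of Proposition~\ref{prop:icp-when-m}: almost surely, for every $k$ and every $s<t$, there exist $l>k$ and $r\in(s,t)$ such that $s_n\leq s$ and $\varPhi_n(t)\in K_k$ imply $\varPhi_n([r,t])\subset K_l$. By countability it is enough to treat fixed $k$ and rational $s<t$. I would choose $l>k$ with $d:=\rho(K_k,M\setminus K_l)>0$ and exploit the metric-graph structural fact that $\partial K_l$ is a finite set. For a path $\varPhi_n$ violating the inclusion, $\sigma_n:=\inf\{u\geq r:\varPhi_n(u)\notin K_l\}\leq t$ is a stopping time with $\varPhi_n(\sigma_n)\in\partial K_l$, and the strong Markov property for the one-point motion together with Lemma~\ref{lem:stronger_TF3} bounds the conditional probability that $\varPhi_n(t)\in K_k$ by $\sup_{y\in\partial K_l}\mathbb{P}^{(1)}_y[\sup_{v\leq t-r}\rho(X_v,y)\geq d]$, which is $o(t-r)$. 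The delicate point, on which I expect most of the work to concentrate, is to rule out such violations uniformly in $n$: paths staying in $K_l$ throughout $[r,t]$ are already counted by the finite set $A^{K_l}_{r,t}$, while paths that enter $K_l$ from outside after $r$ must cross the finite set $\partial K_l$, so coalescence reduces their distinct forward continuations to countably many effective lineages, to which the $o(t-r)$ estimate can then be summed. Once condition~(ii) is in hand, Proposition~\ref{prop:icp-when-m} yields the ICP almost surely, and Theorem~\ref{thm:inst_coalescence_metric_graph} completes the proof.
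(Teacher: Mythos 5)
Your overall strategy (Proposition~\ref{prop:icp-when-m} with $\alpha=2$, $\kappa=1$, followed by Theorem~\ref{thm:inst_coalescence_metric_graph}) is the paper's, and your verification of (P~\ref{item:23}) by the one-dimensional pigeonhole is exactly what is done there. The genuine gap is in your treatment of condition~\eqref{item:25} of Proposition~\ref{prop:icp-when-m}. The paper's argument there is purely pathwise: the exhaustion is built from nested \emph{simple} open sets $\bar{U}_{i,k}\subset U_{i,k+1}$, and Lemma~\ref{lem:denseness} supplies, for each gap $(u_{ij,k},u_{ij,k+1})$ on each edge, a guard trajectory $\varPhi_{n_{ij,k}}$ sitting in that gap at time $t$ and hence, by continuity, on $[r,t]$ for some $r\in(s,t)$. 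Any older trajectory ($s_{n}\leq s<r$) found in $\bar{U}_{i,k}$ at time $t$ is then confined to $U_{i,k+1}$ on all of $[r,t]$ by the coalescing property (Sk~\ref{item:coalescence}): to leave and return it would have to meet, hence merge with, a guard, which never leaves its gap. No probabilistic estimate is needed beyond Lemma~\ref{lem:denseness}, and the conclusion holds simultaneously for all real $s<t$ on a single full-measure event.

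Your probabilistic substitute does not close. First, condition~\eqref{item:25} must hold for \emph{all} real $s<t$ at once, and your reduction to rational $t$ is unjustified: $\varPhi_n(t)\in K_k$ neither implies nor is implied by $\varPhi_n(t')\in K_k$ for nearby rational $t'$ without an equicontinuity input that is essentially (Sk~\ref{item:rel_comp}), i.e.\ part of what is being proved. Second, your estimate is $o(t-r)$ \emph{per lineage}, and summing it over ``countably many effective lineages'' gives nothing: a countable sum of $o(t-r)$ terms need not be small, and controlling the number of distinct forward continuations crossing $\partial K_l$ during $[r,t]$ is essentially the ICP itself, so the argument is circular precisely at the decisive step. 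A smaller but real defect is your verification of (P~\ref{item:22}) for the balls $K_l=\overline{B}(x_0,l)$: (TF~\ref{item:tf_icp}) guarantees coalescence \emph{or exit from the small simple compact} within time $\beta\varepsilon^2$, and exiting a small simple compact does not imply exiting the large ball, so the inequality transfers in the wrong direction. The paper sidesteps both issues by confining trajectories into individual simple compacts (so Proposition~\ref{prop:A_t_finite} is only ever needed for simple compacts) and by making the confinement deterministic; condition~\eqref{item:25} should be redone along those lines.
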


\begin{proof} 
  Using Proposition \ref{prop:icp-when-m}, we will prove that
  a.s. $\varPhi$ possesses the ICP by verifying that simple compact sets
  satisfy \(\mathcal{P}\) given in
  Section~\ref{sec:suff-cond-ensur}, i.e. \eqref{eq:3} and
  \eqref{eq:4} are satisfied, and by verifying
  Proposition~\ref{prop:icp-when-m}-\eqref{item:24}. Let \(K\) be a
  simple compact and let \(\beta,p>0\) be the constants given in
  Assumption (TF~\ref{item:tf_icp}), taking \(\alpha=2\) we obtain
  that \eqref{eq:3} holds and so (P~\ref{item:22}) is satisfied. Let
  us remark that on a metric graph, for any simple compact \(K\) there
  is \(C>0\) such that for all \(A\subset{K}\) and \(n\geq{1}\), it
  holds that
  \begin{equation}
    \label{eq:8}
    \#A\geq{}n\:\Longrightarrow\:\rho(x,y)\leq{}Cn^{-1}\:\textrm{for some}\:(x,y)\in{}A^{2}\setminus{}\Delta.
  \end{equation}
  This implies that \eqref{eq:4} and so (P~\ref{item:23}) holds with
  $\kappa=1$. Since \(\alpha\kappa=2>1\), the simple compact \(K\)
  satisfies \(\mathcal{P}\).

  We now verify~Proposition~\ref{prop:icp-when-m}-\eqref{item:24}. Let
  \((U_{i,k})_{{i\geq{1}},\,{k\geq{1}}}\) be a sequence of simple open
  sets such that
  \begin{itemize}
  \item For all \(k\geq{1}\), \(M=\cup_{i\geq{1}}U_{i,k}\);
  \item For all \(i\geq{1}\) and \(k\geq{1}\),
    \(\bar{U}_{i,k}\subset{U}_{i,k+1}\).
  \end{itemize}
  The space \(M\) being a locally compact separable metric space, such
  sequence exists.

  For every \(k\geq{1}\), let
  \(K_{k}=\cup_{i=1}^{k}\bar{U}_{i,k}\). Then \(K_{k}\subset{K_{k+1}}\)
  and \(M=\cup_{k\geq{1}}K_{k}\). For each \(i,k\), the open set
  \(U_{i,k}\) is a simple neighborhood of some point \(x_{i}\in{M}\)
  that can be written in the form
  \(U_{i,k}=\cup_{j=1}^{d(x_{i})}[x_{i},u_{ij,k})\), with
  \(u_{ij,k}\in{E_{j}^{x_{i}}}\). Note that the points \(u_{ij,k}\) are
  such that \(u_{ij,k}\in(x_{i},u_{ij,k+1})\).

  Let us place on an event of probability one such that for all for all
  $t$ the set $\{\varPhi_n(t):s_n<t\}$ is dense in
  $M$. Lemma~\ref{lem:denseness} ensures that this event exists.  Let us
  fix \(k\geq{1}\) and \(s<t\). Then for all \(i\leq{k}\) and
  \(j\in\{1,\ldots{},d(x_{i})\}\), there exists \(n_{ij,k}\) such that
  \(s_{n_{ij,k}}<t\) and such that
  \(\varPhi_{n_{ij,k}}(t)\in(u_{ij,k},u_{ij,k+1})\). By continuity of
  the trajectories \(\varPhi_{n_{ij,k}}\), there exists \(r\in(s,t)\)
  such that for all \(i\leq{k}\) and \(j\in\{1,\ldots{},d(x_{i})\}\),
  \(\varPhi_{n_{ij,k}}([r,t])\subset(u_{ij,k},u_{ij,k+1})\subset{U_{ij,k+1}}\).

  Let now \(n\geq{1}\) be such that \(s_{n}\leq{s}\) and
  \(\varPhi_{n}(t)\in{K_{k}}\). Then, there is \(i\leq{k}\) such that
  \(\varPhi_{n}(t)\in\bar{U}_{i,k}\). Since \(s_{n}<r\), the
  coalescing property implies that for all \(t'\in[r,t]\),
  \(\varPhi_{n}(t')\in
  U_{x_{i}}(\{\varPhi_{n_{ij,k}}(t'):1\leq{j}\leq{d(x_{i})}\})
  \subset{U_{i,k+1}}\). This proves that
  \(\varPhi_{n}([r,t])\subset{K_{k+1}}\). Proposition~\ref{prop:icp-when-m}-\eqref{item:24}
  is verified.

  Applying Proposition~\ref{prop:icp-when-m}, we prove that \(\varPhi\)
  possesses the ICP a.s. Applying now
  Theorem~\ref{thm:inst_coalescence_metric_graph}, we can conclude that
  \(\varPhi\) is a skeleton a.s.    
\end{proof}

Using the results of Theorem~\ref{thm:measurable_modification} and Lemma
\ref{lem:strong_flow_instant_coalescence}, we get the following
corollary of Theorem~\ref{thm:metric_graph_instantaneous_coalescence}.

\begin{corollary}
\label{cor:sfp_metric_graph}
  Assume that the sequence $(\mathsf{P}^{(n)}_{\bullet}:n\in \mathbb{N})$ satisfies
  conditions (TF~\ref{item:consistency}),
  (TF~\ref{item:coalescing_property}),
  (TF~\ref{item:continuity_of_trajectories_TF3}) and
  (TF~\ref{item:tf_icp}). Then the family of mappings \(\theta\)
  defined by~\eqref{eq:2} is a strong measurable continuous
  modification of \(\psi^{0}\).
\end{corollary}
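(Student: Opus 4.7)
The plan is to combine three results established earlier in the paper: Theorem~\ref{thm:metric_graph_instantaneous_coalescence}, Theorem~\ref{thm:measurable_modification}, and Lemma~\ref{lem:strong_flow_instant_coalescence}. The hypotheses of the corollary are precisely (TF~\ref{item:consistency}), (TF~\ref{item:coalescing_property}), (TF~\ref{item:continuity_of_trajectories_TF3}) and (TF~\ref{item:tf_icp}), which are exactly the hypotheses of Theorem~\ref{thm:metric_graph_instantaneous_coalescence}.

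First, I would invoke Theorem~\ref{thm:metric_graph_instantaneous_coalescence} to conclude that, almost surely, the random sequence $\varPhi$ constructed in Section~\ref{sec:rand-sequ-varphi} is a skeleton in $M$ that possesses the ICP. In particular, this gives $\mathbb{P}^{\infty}_{\mathcal{D}}(\mathcal{S}(M))=1$, i.e.\ condition (TF~\ref{item:14}) is satisfied.

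Since we now have all four conditions (TF~\ref{item:consistency}), (TF~\ref{item:coalescing_property}), (TF~\ref{item:continuity_of_trajectories_TF3}) and (TF~\ref{item:14}), we are in the setting of Section~\ref{sec:meas-modif}. Theorem~\ref{thm:measurable_modification} then applies and shows that the family $\theta$ defined by~\eqref{eq:2} is a measurable continuous modification of $\psi^{0}$ that a.s.\ preserves $\varPhi$.

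Finally, since $\varPhi$ a.s.\ possesses the ICP (by the first step), Lemma~\ref{lem:strong_flow_instant_coalescence} applies and yields that $\theta$ satisfies the strong flow property (SF~\ref{item:sfp}) almost surely. Hence $\theta$ is a strong measurable continuous modification of $\psi^{0}$, which is the claim. There is no real obstacle here: all the substantive work is already packaged in the previous sections (the probabilistic estimates leading to the ICP, the Ascoli--Arzel\`a argument used in Theorem~\ref{thm:inst_coalescence_metric_graph}, and the measurable selection underlying $\Theta$). The corollary simply chains them together.
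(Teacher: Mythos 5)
Your proposal is correct and follows exactly the route the paper indicates: Theorem~\ref{thm:metric_graph_instantaneous_coalescence} to get that $\varPhi$ is a.s.\ a skeleton with the ICP (hence (TF~\ref{item:14})), then Theorem~\ref{thm:measurable_modification} for the measurable continuous modification $\theta$, then Lemma~\ref{lem:strong_flow_instant_coalescence} for the strong flow property. Nothing is missing.
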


\section{Examples}
\label{sec:examples}

\subsection{Coalescing independent Walsh Brownian motions on a metric
  graph}
\label{subsec:Coalescing independent Walsh Brownian motions on a metric graph}

Let $M$ be a metric graph with vertices $V$, edges $(E_j:j\in J)$, and
isometries $e_j:(0,L_j)\to E_j,$ $0<L_j\leq \infty$ (see Section
\ref{sec:notations}). We equip $M$ with the shortest path distance
$\rho$. For all \(j\in J\), denote $g_j=e_j(0)$ and $d_j=e_j(L_j)$
(with \(d_{j}=\infty\) when \(L_{j}=\infty\)). Further, for any
$v\in V$ denote $J^+(v)=\{j\in J: g_j=v\}$ and
$J^-(v)=\{j\in J: d_j=v\},$ so that $J(v)=J^+(v)\cup J^-(v).$ Assume
that $\inf_{j\in J}L_j>0.$ To each $v\in V$ and $j\in J(v)$ we
associate a parameter $p_j(v)\in [0,1],$ such that
$\sum_{j\in J(v)}p_j(v)=1.$ Denote by $D$ the set of all continuous
functions $f:M\to \mathbb{R},$ such that for every $j\in J,$
$f\circ e_j\in C^2({(0,L_j)})$ with bounded first and second
derivatives, and
$$
\sum_{j\in J^+(v)}p_j(v)(f\circ e_j)'(0+) =
\sum_{j\in J^-(v)}p_j(v)(f\circ e_j)'(L_j-).
$$
For $f\in D$ and $x=e_j(t)$ set $f'(x)=(f\circ e_j)'(t),$
$f''(x)=(f\circ e_j)''(t),$ and for $v\in V$ set $f'(v)=f''(v)=0.$ Let
$$
Af=\frac{1}{2}f'', \ f\in D.
$$ 
The operator $A$ generates a continuous Feller Markov process on $M$
(see, \cite{zbMATH00528639}). We will call such process the Walsh
Brownian motion (WBM) on $M$ with transmission parameters
\(p_{j}(v)\), \(v\in{V}\), \(j\in{J(v)}\).  We are interested in the
existence of a strong measurable modification of a stochastic flow of
measurable mappings in $M$, whose trajectories are WBM's that are
independent before meeting and coalesce at the meeting time. Let
$(\mathsf{P}_t:t\geq 0)$ be the transition function of a WBM on $M$
with transmission parameters \(p_{j}(v)\), \(v\in{V}\),
\(j\in{J(v)}\). We define $(\mathsf{P}^{(n)}_{\bullet}: n\in \mathbb{N})$ to be
a unique consistent sequence of coalescing Feller transition functions
on $M$ obtained from
$\left(\mathsf{P}_{\bullet}^{\otimes n}: n\in \mathbb{N}\right)$ (see
\cite[Theorem 4.3.1]{zbMATH07226371}). Let \(\psi^{0}\) be a
stochastic flow of measurable mappings in $M$ associated to the
consistent sequence of coalescing Feller transition functions
$(\mathsf{P}^{(n)}_{\bullet}:n\in \mathbb{N}).$

\begin{theorem}
  \label{thm:WBM_existence} There exists a strong measurable
  continuous modification of $\psi^0$.
\end{theorem}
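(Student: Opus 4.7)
The plan is to apply Corollary~\ref{cor:sfp_metric_graph}, which reduces the theorem to verifying that the consistent sequence $(\mathsf{P}^{(n)}_{\bullet}:n\in\mathbb{N})$ satisfies (TF~\ref{item:consistency}), (TF~\ref{item:coalescing_property}), (TF~\ref{item:continuity_of_trajectories_TF3}) and (TF~\ref{item:tf_icp}). The first two hold by construction via \cite[Theorem 4.3.1]{zbMATH07226371}. Condition (TF~\ref{item:continuity_of_trajectories_TF3}) reduces to a Gaussian-type tail bound on the WBM transition kernel: since between consecutive vertex visits the trajectory of a WBM is a standard one-dimensional Brownian motion along the current edge, iterating the strong Markov property at the successive hitting times of $V$ yields an estimate of the form $\mathsf{P}_{t}(x,B(x,r)^{c})\leq C_{K}e^{-c_{K}r^{2}/t}$ uniformly in $x$ on any compact $K$, which is $o(t)$ as $t\downarrow 0$.

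The substantive step is (TF~\ref{item:tf_icp}), which I would prove by distinguishing two cases. If the simple compact $K$ contains no vertex, then $K$ is a closed subinterval of a single edge, and up to $\sigma\wedge\tau$ the two-point motion consists of two independent one-dimensional Brownian motions on that edge. Their difference is a one-dimensional Brownian motion started at a value of modulus $\leq\varepsilon$, so by Brownian scaling it hits $0$ within time $\beta\varepsilon^{2}$ with a universal positive probability $p$, giving \eqref{eq:4_17} in this case. If $K$ contains a single vertex $v$, I would pick a simple neighborhood $U\supset K$ of $v$ and identify it isometrically with a ball around $v$ in the infinite star graph $S_{v}$ having $d(v)$ half-infinite edges and the transmission parameters $\{p_{j}(v):j\in J(v)\}$. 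Write $\mathbb{Q}^{(2)}$ for the two-point law on $S_{v}^{2}$ of independent WBMs on $S_{v}$ that coalesce at meeting. Up to the first exit from $U$ the two-point motions on $M$ and on $S_{v}$ coincide, hence $\mathbb{P}^{(2)}_{(x,y)}[\sigma\wedge\tau>\beta\varepsilon^{2}]\leq\mathbb{Q}^{(2)}_{(x,y)}[\tau>\beta\varepsilon^{2}]$. The infinite star graph admits the Brownian scaling $\phi_{\varepsilon}:S_{v}\to S_{v}$ (edgewise dilation by $\varepsilon^{-1}$ fixing $v$) under which $(\phi_{\varepsilon}X_{\varepsilon^{2}\cdot},\phi_{\varepsilon}Y_{\varepsilon^{2}\cdot})$ started from $(x,y)$ has law $\mathbb{Q}^{(2)}_{(\phi_{\varepsilon}x,\phi_{\varepsilon}y)}$, so the desired bound reduces to the scale-free statement
\[
  \sup\bigl\{\mathbb{Q}^{(2)}_{(\tilde x,\tilde y)}[\tau>\beta]:(\tilde x,\tilde y)\in S_{v}^{2},\ \rho(\tilde x,\tilde y)\leq 1,\ \rho(\tilde x,v)\vee\rho(\tilde y,v)\leq 1\bigr\}\leq 1-p.
\]

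This uniform bound would be deduced by combining (i) $\tau<\infty$ $\mathbb{Q}^{(2)}_{(\tilde x,\tilde y)}$-a.s.\ from any starting pair on $S_{v}$, with (ii) the Feller continuity of the two-point motion (in the spirit of Lemma~\ref{lem:two_point_continuity}) and compactness of the parameter set. The main obstacle is (i), the almost sure finiteness of the meeting time of two independent WBMs on $S_{v}$. My plan for (i) is to decompose the trajectory of $X$ into excursions away from $v$: by recurrence of the one-point WBM on $S_{v}$, the particle $X$ returns to $v$ infinitely often, and at each return its next excursion enters the edge currently occupied by $Y$ with a uniformly positive probability (determined by the transmission parameters $p_{j}(v)$); once both particles share an edge their difference is an ordinary one-dimensional Brownian motion that hits $0$ almost surely. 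A Borel--Cantelli argument over the successive excursions of $X$ then yields $\tau<\infty$ a.s., and combining with Feller continuity and compactness produces the required uniform estimate. This verifies (TF~\ref{item:tf_icp}), and Corollary~\ref{cor:sfp_metric_graph} concludes the existence of a strong measurable continuous modification of $\psi^{0}$.
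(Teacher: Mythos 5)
Your overall skeleton matches the paper's: reduce to verifying (TF~\ref{item:tf_icp}) via Corollary~\ref{cor:sfp_metric_graph}, pass to a star graph, and exploit Brownian scaling. But the key uniform estimate is not actually established by your argument, for two reasons. First, the scaling reduction is misstated: rescaling by \(\varepsilon^{-1}\) turns the constraint \(\rho(x,y)\leq\varepsilon\) into \(\rho(\tilde x,\tilde y)\leq 1\), but it does \emph{not} confine \(\tilde x,\tilde y\) to the unit ball around \(v\) --- a pair in \(K\) at mutual distance \(\varepsilon\) but at distance of order \(1\) from \(v\) rescales to a pair at distance of order \(\varepsilon^{-1}\) from \(v\). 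The correct scale-free statement is a supremum over the non-compact set \(\{(\tilde x,\tilde y):\rho(\tilde x,\tilde y)\leq 1\}\), so compactness of the parameter set is unavailable. Second, even on a compact set the argument ``\(\tau<\infty\) a.s.\ pointwise, plus Feller continuity, plus compactness'' does not close: since \(\{\tau\leq\beta\}\) is closed in path space, the portmanteau theorem only gives that \((\tilde x,\tilde y)\mapsto\mathbb{Q}^{(2)}_{(\tilde x,\tilde y)}[\tau\leq\beta]\) is upper semicontinuous (this is the direction provided by Lemma~\ref{lem:two_point_continuity}), and a strictly positive upper semicontinuous function on a compact set need not be bounded away from zero; you would need genuine continuity of the meeting probability, which you do not prove. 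Your sketch of the pointwise statement (i) also has gaps: once the two particles share an edge their difference is a Brownian motion only until one of them returns to \(v\), so ``they share an edge, hence they meet'' does not follow, and an excursion decomposition ``at each return to \(v\)'' is ill-defined because the return times accumulate.

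The paper avoids all of this by proving the quantitative bound \eqref{eq:WBM_meeting_time} directly, namely \(\inf_{(x,y)}\mathbb{P}_{x,y}[T_{\Delta}\leq\beta\rho(x,y)^{2}]>0\) over the whole star graph, through an explicit case analysis (\(y=v\); \(x,y\) on different edges; \(x,y\) on the same edge). Each case is handled by the scaling invariance of the Walsh Brownian motion combined with the strong Markov property at the first hitting time of \(v\) by one of the particles and elementary one-dimensional estimates (reflection principle, hitting probabilities for reflected Brownian motion); uniformity comes from these explicit lower bounds, not from compactness. If you want to keep your softer strategy you would have to (a) prove continuity, not just semicontinuity, of \((\tilde x,\tilde y)\mapsto\mathbb{Q}^{(2)}_{(\tilde x,\tilde y)}[\tau\leq\beta]\), and (b) supply a separate uniform estimate for rescaled pairs far from \(v\) (which, being within distance \(1\) of each other and far from \(v\), necessarily lie on a common edge, so your vertex-free argument applies there); at that point you would essentially be reproducing the paper's computation.
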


\begin{proof}

  According to Theorem
  \ref{thm:metric_graph_instantaneous_coalescence} it is enough to
  verify (TF~\ref{item:tf_icp}), i.e. for any simple compact $K$,
  there are \(\beta,p>0\) such that that \eqref{eq:4_17}
  holds. Without loss of generality we will suppose that \(K\) is a
  neighborhood a vertex \(v\in{V}\) (in the case \(K\) doesn't contain
  any vertices, it suffices to add a vertex \(v\in K\) of degree 2 so
  that \(K\) is a compact neighborhood of \(v\), and to set the
  transmission parameters to $p_1=p_2=\frac{1}{2}$). Let $\tilde{M}$
  be a star graph that contains a simple compact isometric to
  $K$. Then $\tilde{M}$ is a metric graph with only one vertex $v$ and
  \(d=d(v)\) edges. We denote $v$ by $0$ and let the adjacent edges of
  $0$ be $E_1,\ldots,E_{d}$. Then for each $j\in \{1,\ldots,d\}$ there
  is a bijection $e_j:(0,\infty)\to E_j$ such that $e_j(0+)=0$. When
  $x=e_j(r)$, we set $|x|=r$ and define the distance on
  $\tilde{M}$ by:
  $$
  \rho(e_i(r),e_j(s))=
  \begin{cases}
    |r-s|, & i=j \\
    r+s, & i\ne j
  \end{cases}
  $$

  Denote by $p_1,\ldots,p_{d}\in [0,1]$ the transmissions parameters
  associated to $v$ and assign them to the edges of $\tilde{M}$. Let
  \(\mathbb{P}_{x,y}(=\mathbb{P}_{x}\otimes{}\mathbb{P}_{y})\) be the
  distribution of \((X,Y)\) where \(X\) and \(Y\) are two independent
  WBM's on $\tilde{M}$  started at \(x\) and \(y\), respectively. Set
  \(T_\Delta=\inf\{t:\,X_{t}=Y_{t}\}\). We note that if
  $\rho(x,y)\leq \epsilon$, then
  $\mathbb{P}^{(2)}_{x,y}[\sigma\wedge \tau >\beta \epsilon^2]\leq
  \mathbb{P}_{x,y}[T_\Delta>\beta \rho(x,y)^2]$. Hence, it is enough to
  show that for some $\beta>0,$
  \begin{equation}
    \label{eq:WBM_meeting_time}
    \inf_{(x,y)\in \tilde{M}^2}\mathbb{P}_{x,y}[T_{\Delta}\leq{\beta\rho (x,y)^{2}}]>0.
  \end{equation}
  The proof is separated into several lemmas.  We first consider the
  case when \(y=0\)\,:

  \begin{lemma}
    \label{lem:1} For any $\beta>0$, there is \(p=p(\beta)>0\) such that
    for all \(x\in{\tilde{M}}\),
    \begin{equation}
      \mathbb{P}_{x,0}[T_{\Delta}\leq{\beta |x|^{2}}]\geq{p}.
    \end{equation}
  \end{lemma}
  
  \begin{proof}
    The WBM on \(\tilde{M}\) is scaling invariant\,: if \(X\) is a WBM
    started at \(x\), then for all \(\lambda>0\), the process
    \(X^{\lambda}\) defined by
    \(X^{\lambda}(t)=\lambda^{-1}X(\lambda^{2}t)\) is a WBM started at
    \(\lambda{}x\).  Using this scaling property with
    \(\lambda=|x|^{-1}\), we obtain that
    \begin{equation*}
      \mathbb{P}_{x,0}[T_{\Delta}\leq{\beta |x|^{2}}]=\mathbb{P}_{u,0}[T_{\Delta}\leq{\beta}]
    \end{equation*}
    where \(u=x/|x|\). In other words, to prove the lemma it suffices to
    prove that for all \(j\), we have that
    \begin{equation}
      \mathbb{P}_{e_{j}(1),0}[T_{\Delta}\leq{\beta}]>0.
    \end{equation}

    We therefore fix \(j\) and let \(X,Y\) be distributed as
    \(\mathbb{P}_{e_{j}(1),0}\).  Let \(i\in\{1,\ldots,d\}\) be such
    that \(p_i>0\). With positive probability
    $(X(\frac{\beta}{2}),Y(\frac{\beta}{2}))\in E^2_i$. It remains to
    note that with positive probability during the time interval
    $[0,\frac{\beta}{2}]$ two independent Brownian motions (BMs) meet
    before leaving $\mathbb{R}_+$.
  \end{proof}

  We now consider the case where \(x,y\) belong to two different
  edges\,:
  \begin{lemma}
    \label{lem:2}
    There exists \(p>0\) such for all \((x,y)\in{E_{i}}\times{E_{j}}\)
    with \(i\neq{}j\),
    \begin{equation}
      \mathbb{P}_{x,y}[T_{\Delta}\leq{2\rho(x,y)^{2})}]\geq p.
    \end{equation}
  \end{lemma}

  \begin{proof}
    Let \((x,y)\in{E_{i}}\times{E_{j}}\) with \(i\neq{j}\). Note that
    \(\rho(x,y)=|x|+|y|\). Denote $\tau_Y=\inf\{t\geq
    0:\,Y(t)=0\}$. Then, using the strong Markov property at time
    \(\tau_{Y}\),
    \begin{align*}
      \mathbb{P}_{x,y}\left[T_{\Delta}\leq{}2\rho(x,y)^{2}\right]
      &= \mathbb{P}_{x,y}\left[T_{\Delta}\leq{}2(|x|+|y|)^{2}\right]\\
      &\geq
        \mathbb{P}_{x,y}\left[\tau_{Y}\leq{|y|^{2}},\,|X(\tau_{Y})|\leq{|x|+|y|}
        \text{ and }T_{\Delta}-\tau_{Y}\leq{}(|x|+|y|)^{2}\right]\\
      &\geq
        \mathbb{E}_{x,y}\left[1_{\{\tau_{Y}\leq{|y|^{2}}\}\cap\{|X(\tau_{Y})|\leq{|x|+|y|}\}}
        \mathbb{P}_{X(\tau_{Y}),0}\left[T_{\Delta}\leq{}(|x|+|y|)^{2}\right]\right]
    \end{align*}
    On the event \(\{|X(\tau_{Y})|\leq{|x|+|y|}\}\), we have that
    \begin{align*}
      \mathbb{P}_{X(\tau_{Y}),0}\left[T_{\Delta}\leq{}(|x|+|y|)^{2}\right]
      \geq{p(\beta)}\geq{}p(1)>0,    
    \end{align*}
    where $\beta=\frac{(|x|+|y|)^{2}}{|X(\tau_{Y})|^{2}}|\geq{1}$ and
    $p(1)$ is defined in Lemma \ref{lem:1}.  We therefore have that
    \begin{equation*}
      \mathbb{P}_{x,y}\left[T_{\Delta}\leq{}2\rho(x,y)^{2}\right]
      \geq{}p(1)\times{}\mathbb{P}_{x,y}\left[{\{\tau_{Y}\leq{|y|^{2}}\}\cap\{|X(\tau_{Y})|\leq{|x|+|y|}\}}\right].
    \end{equation*}
    Since \(|X|\) is a BM reflected at \(0\), we have that under
    \(\mathbb{P}_{x}\), \(|X|\) is distributed as \(||x|+B|\) where
    \(B\) is a Brownian motion started at \(0\). Setting
    \(\varphi(t)=\mathbb{P}\left[|B_{t}|\leq{1}\right]\) for all
    \(t>0\), we thus have that
    \begin{align*}
      \mathbb{P}_{x}\left[|X_{t}|\leq{|x|+|y|}\right]
      &= \mathbb{P}\left[||x|+B_{t}|\leq{|x|+|y|}\right]\\
      &\geq \mathbb{P}\left[|B_{t}|\leq{|y|}\right]
        = \varphi\left(\frac{t}{|y|^{2}}\right).
    \end{align*}
    We therefore have that, using again the scaling property for \(Y\),
    \begin{align*}
      \mathbb{P}_{x,y}\left[T_{\Delta}\leq{}2\rho(x,y)^{2}\right]
      &\geq{}p(1)\times{}\mathbb{E}_{y}\left[1_{\{\tau_{Y}\leq{|y|^{2}}\}}\times\varphi\left(\frac{\tau_{Y}}{|y|^{2}}\right)\right]\\
      &\geq{}p(1)\times{}\mathbb{E}_{e_{j}(1)}\left[1_{\{\tau_{Y}\leq{1}\}}\times\varphi\left({\tau_{Y}}\right)\right]>0.
    \end{align*}
    which proves the lemma.
  \end{proof}

  We finally consider the case where \(x,y\) both belong to the same
  edge\,: Denote by \(q\) the probability that two independent BMs
  started at distance one meet before time $1$. Then \(q\) is a positive
  probability.  Let \(\varepsilon>0\) be such that
  \begin{equation}
    \label{eq:1}
    \mathbb{P}[|B_{1}|\geq\varepsilon^{-1}]=\frac{q}{2}.
  \end{equation}
  \begin{lemma}
    \label{lem:2}
    There exists \(p>0\) such for all \((x,y)\in{E_{i}}\times{E_{i}}\),
    \begin{equation}
      \mathbb{P}_{x,y}[T_{\Delta}\leq{2\rho(x,y)^{2})}]\geq p.
    \end{equation}
  \end{lemma}
  \begin{proof}
    Let \((x,y)\in{E_{i}}\times{E_{i}}\) with \(0<|y|<|x|\). Note that
    \(\rho(x,y)=|x|-|y|\). Denote $\tau_Y=\inf\{t\geq 0:\,Y(t)=0\}$.

    The scaling property implies that, setting \(r=\frac{|x|}{|y|}\),
    \begin{align*}
      \mathbb{P}_{x,y}\left[T_{\Delta}\leq{}\rho(x,y)^{2}\right]
      &=\mathbb{P}^{i}_{r,1}\left[T_{\Delta}\leq{}(r-1)^{2}\right]
    \end{align*}
    where \(\mathbb{P}^{i}_{r,1}=\mathbb{P}_{e_{i}(r),e_{i}(1)}\).

    Further,
    \begin{align*}
      \mathbb{P}^{i}_{r,1}\left[T_{\Delta}\leq{}(r-1)^{2}\right]
      &\geq{}
        \mathbb{P}^{i}_{r,1}\left[T_{\Delta}\leq{}(r-1)^{2}\leq\tau_{Y}\right]\\
      &={} \mathbb{Q}_{r,1}\left[T_{\Delta}\leq{}(r-1)^{2}\leq\tau_{Y}\right]
    \end{align*}
    where under \(\mathbb{Q}_{r,1}\), \(X\) and \(Y\) are two
    independent Brownian motions respectively started at \(r\) and
    \(1\). We thus have that
    \begin{align*}
      \mathbb{P}^{i}_{r,1}\left[T_{\Delta}\leq{}(r-1)^{2}\right]
      &\geq{} \mathbb{Q}_{r,1}\left[T_{\Delta}\leq{}(r-1)^{2}\right]
        - \mathbb{Q}_{r,1}\left[\tau_{Y}\leq{}(r-1)^{2}\right]
    \end{align*}
    We have that (using again the scaling property)
    \begin{align*}
      \mathbb{Q}_{r,1}\left[T_{\Delta}\leq{}(r-1)^{2}\right]
      &= \mathbb{Q}_{r-1,0}\left[T_{\Delta}\leq{}(r-1)^{2}\right]\\
      &= \mathbb{Q}_{1,0}\left[T_{\Delta}\leq{}1\right]= q >0.
    \end{align*}

    Using the reflection principle, we have that
    \begin{align*}
      \mathbb{Q}_{r,1}\left[\tau_{Y}\leq{}(r-1)^{2}\right]
      &= 2\mathbb{P}\left[B_{(r-1)^{2}}\leq{}-1\right] \\
      &= \mathbb{P}\left[|B_{1}|\geq{(r-1)^{-1}}\right].
    \end{align*}
    where under $\mathbb{P},$ \(B\) is a BM started at \(0\). For all
    \(r\in(1,1+\varepsilon]\), \eqref{eq:1} implies that
    \(\mathbb{P}\left[|B_{1}|\geq (r-1)^{-1}\right]\leq\frac{q}{2}\).
    We therefore have that for all \(r\in(1,1+\varepsilon]\),
    \begin{align*}
      \mathbb{P}^{i}_{r,1}\left[T_{\Delta}\leq{}(r-1)^{2}\right]
      &\geq q-\frac{q}{2}=\frac{q}{2}.
    \end{align*}

    Let us suppose now that \(r>1+\varepsilon\). In this case, using the
    strong Markov property at time \(\tau_{Y}\),
    \begin{align*}
      \mathbb{P}^{i}_{r,1}\left[T_{\Delta}\leq{}2(r-1)^{2}\right]
      &\geq
        \mathbb{P}^{i}_{r,1}\left[\tau_{Y}\leq{\varepsilon^{2}},\,|X_{\tau_{Y}}|\leq{1+r},\,T_{\Delta}-\tau_{Y}\leq{}(r-1)^{2}\right]
      \\
      &\geq
        \mathbb{E}^{i}_{r,1}\left[1_{\{\tau_{Y}\leq{\varepsilon^{2}}\}\cap\{|X_{\tau_{Y}}|\leq{1+r}\}}\mathbb{P}_{X_{\tau_{Y}},0}[T_{\Delta}\leq{}(r-1)^{2}]\right]
    \end{align*}
    Set
    \(\beta_{\varepsilon}=\inf_{r>1+\varepsilon}\left\{\frac{(r-1)^{2}}{(1+r)^{2}}\right\}>0\). On
    the event \(\{|X(\tau_{Y})|\leq{1+r}\}\), we have
    \begin{align*}
      \mathbb{P}_{X(\tau_{Y}),0}\left[T_{\Delta}\leq{}(r-1)^{2}\right]
      &\geq{}\mathbb{P}_{u,0}\left[T_{\Delta}\leq{}\beta_{\varepsilon}(1+r)^{2}\right]\\
      &\geq{}p(\beta_{\varepsilon})>0,    
    \end{align*}
    where $|u|=1+r.$ We therefore have that
    \begin{equation*}
      \mathbb{P}^{i}_{r,1}\left[T_{\Delta}\leq{}2(r-1)^{2}\right]
      \geq{}p(\beta_{\varepsilon})\times{}\mathbb{P}^{i}_{r,1}\left[{\{\tau_{Y}\leq{\varepsilon^{2}}\}\cap\{|X(\tau_{Y})|\leq{1+r}\}}\right].
    \end{equation*}
    Since \(|X|\) is a BM reflected at \(0\), we have that under
    \(\mathbb{P}^{i}_{r,1}\), \(|X|\) is distributed as \(|r+B|\) with
    \(B\) a BM started at \(0\). We thus have that
    \begin{align*}
      \mathbb{P}^{i}_{r,1}\left[|X_{t}|\leq{1+r}\right]
      &= \mathbb{P}\left[|r+B_{t}|\leq{r+1}\right]\\
      &\geq \mathbb{P}\left[|B_{t}|\leq{1}\right] = \varphi(t)
    \end{align*}

    Finally,
    \begin{displaymath}
      \mathbb{P}^{i}_{r,1}\left[T_{\Delta}\leq{}2\rho(x,y)^{2}\right]
      \geq{}
      p(\beta_{\varepsilon})\times{}
      \mathbb{E}^i_{r,1}\left[1_{\{\tau_{Y}\leq{\varepsilon^{2}}\}}\times\varphi(\tau_{Y})\right]
      >0
    \end{displaymath}
    which proves the lemma.
  \end{proof}

  This proves \eqref{eq:WBM_meeting_time} and the Theorem.
\end{proof}

\subsection{Coalescing Tanaka flow} 
\label{subsec:Coalescing Tanaka flow}
In this section we consider a stochastic flow of measurable mappings
in $\mathbb{R}$ that consists of solutions to Tanaka's SDE
\begin{equation}
  \mathrm{d}X(t)=\mbox{sign}(X(t))\,\mathrm{d}W(t),
\end{equation}
where $W$ is a Brownian motion and
\(\mbox{sign}(x)=
\begin{cases}
  1 &\mbox{if } x\geq 0 \\
  -1 &\mbox{if } x<0
\end{cases}
\). It is well-known that Tanaka's SDE does not possess a strong
solution. When two solutions meet they may not coalesce. Let us assume
coalescence in the definition of the $n$-point motions. More
precisely, the \(n\)-point motion \((X_{1},\ldots{},X_{n})\) starting
from $x\in \mathbb{R}^n$ solves the following problem
\begin{displaymath}
  \begin{cases}
    X_i(t)=x_i+\int^t_0 \mbox{sign}(X_i(s))\,\mathrm{d}W(s), & 1\leq i\leq n, \\
    X_i(s)=X_j(s)\:\Rightarrow\: X_i(t)=X_j(t), & 1\leq i<j\leq n,\
                                                  s\leq{t}
  \end{cases}
\end{displaymath}
The problem has a unique weak solution, which is a Feller process. We
denote its transition function by \(\mathsf{P}^{(n)}_{\bullet{}}\).  The
sequence $(\mathsf{P}^{(n)}_{\bullet{}}:n\in \mathbb{N})$ is a
sequence of coalescing Feller transition functions that satisfies
(TF~\ref{item:consistency}), (TF~\ref{item:coalescing_property}) and
(TF~\ref{item:continuity_of_trajectories_TF3}). Let $\psi^0$ be a
stochastic flow of measurable mappings in $\mathbb{R}$ associated to
$(\mathsf{P}^{(n)}_{\bullet{}}:n\in \mathbb{N})$. Let
$\mathcal{D}=\{(s_n,x_n):n\in\mathbb{N}\}$ be a countable dense set in
$\mathbb{R}\times\mathbb{R}$, and let \(\varPhi\) be the random
sequence constructed in Section~\ref{sec:rand-sequ-varphi}. Recall
that for each \(n\), $\varPhi_n$ is a continuous modification of
$\psi^0_{s_n,\cdot}(x_n)$ with $\varPhi_n(s_n)=x_n$.  In Lemma
\ref{lem:Tanaka_TF4} we verify that (TF~\ref{item:14}) is satisfied
and thus we can construct a measurable continuous modification
$\theta$ of $\psi^0$ as in
Section~\ref{sec:measurable_modification_out_of_skeleton}.  Let
$W_n(t)=\int^t_{s_n}\mbox{sign}(\varPhi_n(s))\,\mathrm{d}\varPhi_n(s),$
$t\geq s_n.$ Note that a.s.
\begin{equation}
  \label{eq:Tanaka_same_B}
  (n,m)\in \mathbb{N}^2,\, s\geq s_n\vee s_m
  \: \Rightarrow\:
  W_n(t)-W_n(s)=W_m(t)-W_m(s),\,
  t\geq s.
\end{equation}
Denote $\sigma_n=\inf\{t\geq s_n:\varPhi_n(t)=0\}.$

\begin{lemma}
  \label{lem:Tanaka_TF4} Let $L\subset \mathbb{R}\times \mathbb{R}$ be a
  compact set. Then a.s. the set
  \begin{displaymath}
    \{\varPhi_n[s,\infty): n\in I^s, (s,\varPhi_n(s))\in L)\}
  \end{displaymath}
  is relatively compact in $X$.
\end{lemma}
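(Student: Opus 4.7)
The plan is to verify the two conditions of the Ascoli--Arzela theorem (Theorem~\ref{lem:ascoli-arzela}) for the random family $\mathcal{F}_L$. Item~(\ref{item:3}) is immediate since $(i(f),f(i(f)))\in L$ for every $f\in\mathcal{F}_L$ and $L$ is compact, so the real work lies in establishing item~(\ref{item:4}): a uniform modulus of continuity over all trajectories $\varPhi_n[s,\infty)$ with $(s,\varPhi_n(s))\in L$.

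The first step is to exploit~\eqref{eq:Tanaka_same_B} together with the density of $(s_n)$ in $\mathbb{R}$ to define, almost surely, a single Brownian motion $B$ on $[a,\infty)$ (for some $a\leq\inf\{s:(s,x)\in L\}$) such that $B(t)-B(s)=W_n(t)-W_n(s)$ for every $n$ and every $s\leq t$ with $s_n\leq s$. Concretely, one picks $n_0$ with $s_{n_0}\leq a$ and sets $B(t)=W_{n_0}(t)-W_{n_0}(a)$; the relation~\eqref{eq:Tanaka_same_B} shows the definition does not depend on the choice of $n_0$. Thus all trajectories in $\mathcal{F}_L$ share a common driving noise.

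Next I would apply Tanaka's formula to each $\varPhi_n$. Using $\mbox{sign}^{2}\equiv 1$, the identity $\int_{s_n}^{t}\mbox{sign}(\varPhi_n(u))\,d\varPhi_n(u)=W_n(t)-W_n(s_n)$ gives, for all $s_n\leq s\leq t$,
\[
|\varPhi_n(t)|-|\varPhi_n(s)|=B(t)-B(s)+L^n_{0}(t)-L^n_{0}(s),
\]
where $L^n_{0}$ is the local time of $\varPhi_n$ at $0$. Since $|\varPhi_n|$ is therefore a reflecting Brownian motion driven by $B$, the Skorokhod reflection bound, applied with time origin shifted to any $r_1\geq s_n$, yields $0\leq L^n_{0}(r_2)-L^n_{0}(r_1)\leq\sup_{r_1\leq u\leq r_2}|B(u)-B(r_1)|$, and hence
\[
\bigl||\varPhi_n(r_2)|-|\varPhi_n(r_1)|\bigr|\leq 2\sup_{r_1\leq u\leq r_2}|B(u)-B(r_1)|.
\]
To pass from $|\varPhi_n|$ back to $\varPhi_n$ I would distinguish two cases: if $\varPhi_n$ keeps a constant sign on $[r_1,r_2]$ then $|\varPhi_n(r_2)-\varPhi_n(r_1)|=||\varPhi_n(r_2)|-|\varPhi_n(r_1)||$ and the bound carries over directly; otherwise there exists $u\in[r_1,r_2]$ with $\varPhi_n(u)=0$, and applying the previous inequality on each of $[r_1,u]$ and $[u,r_2]$ bounds $|\varPhi_n(r_1)|$ and $|\varPhi_n(r_2)|$ separately. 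In either case one obtains a universal constant $C$ such that
\[
|\varPhi_n(r_2)-\varPhi_n(r_1)|\leq C\sup_{r_1\leq u\leq r_2}|B(u)-B(r_1)|,
\]
uniformly in $n$ and in $r_1\leq r_2$ with $s_n\leq r_1$.

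Finally, taking any $C\geq\sup\{i(f):f\in\mathcal{F}_L\}$ (which is finite because $L$ is compact), the previous estimate controls the modulus of continuity of every $f\in\mathcal{F}_L$ on $[i(f),C]\subset[a,C]$ by that of $B$ on the compact interval $[a,C]$, and the latter vanishes with the mesh on an event of full probability. This verifies item~(\ref{item:4}) of Theorem~\ref{lem:ascoli-arzela} and completes the proof. The main technical point is the sign-change case in the passage from $|\varPhi_n|$ to $\varPhi_n$; everything else reduces to routine estimates once the common driver $B$ has been identified.
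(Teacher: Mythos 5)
Your proposal is correct and follows essentially the same route as the paper: both reduce to the Ascoli--Arzela criterion of Theorem~\ref{lem:ascoli-arzela}, use~\eqref{eq:Tanaka_same_B} to replace all the $W_n$ by a single driving Brownian motion on the relevant time window, control $|\varPhi_n|$ via Tanaka's formula and the Skorokhod reflection bound, and then pass from $|\varPhi_n|$ to $\varPhi_n$ by the sign-change/zero-crossing argument (the paper's factor $4$ in $m_{a,b}(\delta,\varPhi_n)\leq 4m_{a,b}(\delta,W_n)$). The only cosmetic difference is that the paper writes the explicit reflected representation~\eqref{eq:Tanaka_solution} of $|\varPhi_n|$ instead of bounding the local-time increment directly.
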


\begin{proof} For a continuous real-valued function $f$ denote its
  modulus of continuity on $[a,b]$ by $m_{a,b}(\delta;f):$
  $$
  m_{a,b}(\delta;f)=\sup\{|f(t)-f(s)|: (s,t)\in [a,b]^2, |t-s|\leq
  \delta\}.
  $$
  By Tanaka's formula and Skorokhod's reflection lemma (\cite[Ch. VI,
  Th. (1.2), L. (2.1)]{zbMATH02150787}),
  \begin{equation}
    \label{eq:Tanaka_solution}
    |\varPhi_n(t)|=\begin{cases}
      |x_n|+W_n(t), & t\in[s_n,\sigma_n); \\
      W_n(t)-\inf_{s\in [\sigma_n,t]}W_n(s) & t\in[\sigma_{n},\infty).
    \end{cases}
  \end{equation}
  Hence, a.s. for any $a,b,$ $s_n\leq a<b,$ we have that
  $m_{a,b}(\delta;|\varPhi_n|)\leq 2m_{a,b}(\delta;W_n).$ Since
  $\varPhi_n$ is continuous, it follows that
  $m_{a,b}(\delta,\varPhi_n)\leq 4m_{a,b}(\delta,W_n).$

  Let $L\subset \mathbb{R}\times \mathbb{R}$ be a compact set. Let $n_0$
  and $C$ be such that $s_{n_0}<s<C$ for all $(s,x)\in L.$ Given
  $\varepsilon>0$ there exists $\delta>0$ such that
  $m_{s_{n_0},C}(\delta,W_{n_0})\leq \varepsilon.$ Assume that
  $(s,\varPhi_n(s))\in L.$ Then $s\geq s_n\vee s_{n_0},$ hence
  $W_n(t)-W_n(s)=W_{n_0}(t)-W_{n_0}(s),$ $t\geq s.$ In particular,
  $m_{s,C}(\delta,W_n)=m_{s,C}(\delta,W_{n_0}).$ It follows that if
  $(t_1,t_2)\in [s,C]^2$ and $|t_1-t_2|\leq \delta,$ then
  $$
  |\varPhi_n(t_1)-\varPhi_n(t_2)|\leq 4m_{s,C}(\delta,W_n)\leq
  4m_{s_{n_0},C}(\delta,W_{n_0})\leq 4\varepsilon.
  $$
  The result follows from Theorem \ref{lem:ascoli-arzela}.
\end{proof}

Lemma~\ref{lem:Tanaka_TF4} ensures that a.s. \(\varPhi\) is a
skeleton. 

\begin{lemma}
  \label{lem:Tanaka_shell}
  $F:=\mathbb{R}\times\{0\}$ is a closed shell of \(B(\varPhi)\), the
  set of bifurcation points of $\varPhi.$
\end{lemma}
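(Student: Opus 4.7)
The strategy is to exploit the fact that away from $0$, all the trajectories $\varPhi_n$ are driven by a common Brownian increment, so no bifurcation can occur off the time axis. By \eqref{eq:Tanaka_same_B} applied to each pair $(n,m)\in\mathbb{N}^{2}$ (a countable family of a.s.\ identities, pushed from rational to arbitrary times by continuity of the $W_{n}$), we obtain a single event of full probability on which, for every $s\in\mathbb{R}$, the function $W^{(s)}(u):=W_{n}(u)-W_{n}(s)$ is well defined independently of the choice of $n\in I^{s}$, continuous in $u\geq{}s$, with $W^{(s)}(s)=0$. Tanaka's SDE reads $d\varPhi_{n}=\mbox{sign}(\varPhi_{n})\,dW_{n}$, so integrating over a subinterval $[s,u]$ on which $\varPhi_{n}$ keeps a constant sign yields $\varPhi_{n}(u)=\varPhi_{n}(s)+\mbox{sign}(\varPhi_{n}(s))\,W^{(s)}(u)$.

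Given $(s,x)$ with $x\neq 0$, suppose $x>0$ (the case $x<0$ being symmetric). Pick $\varepsilon\in(0,x/2)$ and set $\tau^{s,x}:=\inf\{u>s:W^{(s)}(u)\leq{}-x/2\}$; by continuity of $W^{(s)}$ and $W^{(s)}(s)=0$ we have $\tau^{s,x}>s$. For any $n\in I^{s}$ with $\varPhi_{n}(s)\in B(x,\varepsilon)$, the starting height $\varPhi_{n}(s)$ exceeds $x/2>0$, so on $[s,\tau^{s,x}]$ the trajectory $\varPhi_{n}$ remains strictly positive; the identity above then gives $\varPhi_{n}(u)=\varPhi_{n}(s)+W^{(s)}(u)$ throughout $[s,\tau^{s,x}]$. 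Thus for any $t\in(s,\tau^{s,x}]$ the family $\{\varPhi_{n}[s,t]:n\in I^{s},\,\varPhi_{n}(s)\in B(x,\varepsilon)\}$ consists of pure vertical translates of the single path $W^{(s)}|_{[s,t]}$, which by Lemma~\ref{lem:past_dense} accumulate, as $\varepsilon\downarrow 0$, on $x+W^{(s)}|_{[s,t]}$. Hence $\mathcal{K}^{s,t}_{x}=\{x+W^{(s)}|_{[s,t]}\}$, so $\nu^{s,t}_{x}=1$ and $\tau^{s}_{x}\geq \tau^{s,x}>s$, which means $(s,x)\notin B(\varPhi)$.

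Combining the previous steps with the a.s.\ events on which $\varPhi$ is a skeleton (Lemma~\ref{lem:Tanaka_TF4}) and on which the conclusion of Lemma~\ref{lem:past_dense} holds, we obtain $B(\varPhi)\subset \mathbb{R}\times\{0\}=F$ almost surely, and $F$ is manifestly closed. The only slightly delicate point is the simultaneous validity over every $s\in\mathbb{R}$ of the common-noise identity used to define $W^{(s)}$; this is handled by the countability argument at the beginning, not by any genuinely analytic obstacle. Everything else is a direct consequence of Tanaka's equation applied on an interval on which the sign of $\varPhi_{n}$ is constant.
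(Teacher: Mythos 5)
Your proof is correct and follows essentially the same route as the paper's: both arguments use the common driving Brownian increment \eqref{eq:Tanaka_same_B} to show that, for $x\neq 0$, all skeleton trajectories starting near $x$ at time $s$ are vertical translates of one another on a short positive time interval, so that $\mathcal{K}^{s,t}_{x}$ is a singleton and $\tau^{s}_{x}>s$. The only cosmetic difference is that you localize via the hitting time of level $-x/2$ by $W^{(s)}$, whereas the paper anchors on a reference trajectory $\varPhi_{n}$ confined to $(0,x)$ and invokes the Skorokhod representation \eqref{eq:Tanaka_solution}; your version spells out the details the paper leaves as ``easy to see.''
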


\begin{proof} Let \((s,x)\in\mathbb{R}\times(0,\infty)\). There exist
  $n\in I^s$ and $t>s$ such that $\varPhi_n([s,t])\subset (0,x).$ It
  is easy to see from \eqref{eq:Tanaka_solution} and
  \eqref{eq:Tanaka_same_B} that $\mathcal{K}^{s,t}_x$ contains only
  the single function $f$ defined by
  $$
  f(r)=x+\varPhi_n(r)-\varPhi_n(s), \ r\in[s,t].
  $$
  This proves that $\tau^s_x> t>s$ and that
  $(s,x)\not\in{}B(\varPhi)$. Similarly,
  \(\mathbb{R}\times(-\infty,0)\subset{B(\varPhi)^{c}}\). Thus
  \(B(\varPhi)\subset{F}\).
\end{proof}

\begin{theorem}
  \label{thm:Tanaka_strong_flow}
  There exists a strong measurable continuous modification of $\psi^0$.
\end{theorem}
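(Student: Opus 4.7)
The plan is to apply Theorem \ref{thm:sfp_stochastic} with the closed shell $F=\mathbb{R}\times\{0\}$ supplied by Lemma \ref{lem:Tanaka_shell}; Lemma \ref{lem:Tanaka_TF4} verifies (TF~\ref{item:14}), and Theorem \ref{thm:measurable_modification} then produces the measurable continuous modification $\theta$ of $\psi^{0}$. Hence we must check that a.s., for all $(s,x)\in\mathbb{R}\times\mathbb{R}$, (i) $k^{s}_{x}<\infty$, and (ii) whenever $k^{s}_{x}=0$, every $t>s$ admits $n\in I^{t}$ with $\theta_{s,\cdot}(x)[t,\infty)=\varPhi_{n}[t,\infty)$. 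Continuity of $\theta$ together with $\theta_{s,s}(x)=x$ forces $k^{s}_{x}=0\iff x=0$, and when $x\neq 0$ one has $z^{s}_{x}(1)=0$ as soon as $\sigma^{s}_{x}(1)<\infty$, so the main content of (i) is that the continuation past $\sigma^{s}_{x}(1)$ accumulates zeros from the right.

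The central mechanism is the Skorokhod representation \eqref{eq:Tanaka_solution} combined with the common driving-BM identity \eqref{eq:Tanaka_same_B}: on a single event of full probability, there is a Brownian motion $W$ (assembled from the $W_{n}$'s, which differ only by additive constants on their overlaps) such that each $|\varPhi_{n}|$ on $[\sigma_{n},\infty)$ equals $W(t)-\inf_{[\sigma_{n},t]}W$, a Skorokhod reflection whose zero set is perfect. I plan to pass to the limit along the approximating sequence $\varPhi_{n^{s,x}_{k}}$ used to define $\theta_{s,\cdot}(x)$: the first zeros $\sigma_{n^{s,x}_{k}}$ converge to $\sigma^{s}_{x}(1)$ (or to $s$ when $x=0$), and the reflected formula passes to the limit, yielding $|\theta_{s,t}(x)|=W(t)-\inf_{[\sigma^{s}_{x}(1),t]}W$ on $[\sigma^{s}_{x}(1),\infty)$. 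In particular the zero set of $\theta_{s,\cdot}(x)$ on that interval is perfect, forcing $\sigma^{s}_{x}(2)=\sigma^{s}_{x}(1)$ and $k^{s}_{x}\leq 1$, which settles (i).

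For (ii) with $x=0$ and $t>s$, the reflected representation produces a perfect zero set of $\theta_{s,\cdot}(0)$ in $[s,\infty)$, so in particular zeros $r$ accumulate at $t$ from the right. Using Lemma \ref{lem:past_dense} and (Sk~\ref{item:denseness}), I will select a skeleton trajectory $\varPhi_{n}$ whose starting pair $(s_{n},x_{n})$ lies sufficiently close to $(s,0)$ so that the common reflected formula, applied with $\sigma_{n}$ near $s$, makes $\varPhi_{n}$ vanish at some common zero $r\in(t,t+\varepsilon)$. Coalescence at $r$, via Lemma \ref{lem:stopping_time_flow} combined with (Sk~\ref{item:coalescence}), gives $\varPhi_{n}[r,\infty)=\theta_{s,\cdot}(0)[r,\infty)$; and on the sign-stabilized intermediate interval $[t,r]$ the common $W$-evolution, together with the identical reflection infima once $\sigma_{n}\leq s$, extends the equality to $[t,\infty)$. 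Thus $n\in I^{s}\subset I^{t}$ meets the requirement.

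The principal technical obstacle is upgrading a.s.\ statements holding \emph{pointwise in $(s,x)$} to a.s.\ statements holding \emph{uniformly in $(s,x)$}. The way out is the global representation via the single Brownian motion $W$: under this representation all the relevant events (perfect running-infimum level sets, continuity of $\inf_{[u,t]}W$ in $u$, right-accumulation of the reflection zeros) reduce to path properties of $W$ alone, which hold on a single event of full probability and can therefore be applied simultaneously for every $(s,x)$, avoiding an uncountable union of null sets.
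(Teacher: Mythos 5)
Your overall strategy (apply Theorem \ref{thm:sfp_stochastic} with the shell $F=\mathbb{R}\times\{0\}$ after Lemma \ref{lem:Tanaka_TF4} and Lemma \ref{lem:Tanaka_shell}) is the paper's, but your key step (i) contains a genuine error: the bound $k^{s}_{x}\leq 1$ is false for the coalescing Tanaka flow. The time $\sigma^{s}_{x}(2)$ is defined through the \emph{restarted} trajectory $\theta_{\sigma^{s}_{x}(1),\cdot}(0)$, whose modulus is $W(t)-\inf_{[r,t]}W$ with $r=\sigma^{s}_{x}(1)$; its zero set accumulates at $r$ from the right only when $r$ is not a right local minimum of the driving Brownian motion $W$. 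For a fixed $r$ this holds a.s., but not simultaneously in $r$: if $u$ is a two-sided local minimum of $W$ (these are a.s. dense), choose $s<u$ with $W>W(u)$ on $[s,u)$ and set $x=W(s)-W(u)>0$; then $\sigma^{s}_{x}(1)=u$, while $|\theta_{u,t}(0)|=W(t)-W(u)>0$ on a right neighbourhood of $u$, so $\sigma^{s}_{x}(2)>\sigma^{s}_{x}(1)$ and $k^{s}_{x}\geq 2$. This is precisely the situation your ``reduce everything to path properties of a single $W$'' argument cannot handle: the property ``the post-$r$ running-minimum set of $W$ accumulates at $r$'' fails on the dense set of right local minima of $W$, and those exceptional times are exactly the ones produced by the flow as bifurcation times. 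What is true, and what the paper proves, is $k^{s}_{x}\leq 2$: if $k^{s}_{x}\geq 3$, then $\sigma^{s}_{x}(1)$ and $\sigma^{s}_{x}(2)$ would both be one-sided local minima of $W$ at the same level $W(s)-|x|$, which is a.s. impossible uniformly over all such pairs (a single null event). The case $k^{s}_{x}=2$ is then absorbed by the induction of Theorem \ref{thm:sfp_deterministic}, in which $\psi$ genuinely differs from $\theta$ after $\sigma^{s}_{x}(2)$; your proposal has no mechanism to handle it because it wrongly excludes it.

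Your step (ii) is closer in spirit to the paper's, but it also leans on the unproved perfect-zero-set claim and on a backward extension of the coalescence from a zero $r>t$ down to $t$, which is both delicate and unnecessary. The paper argues forward instead: if no approximating trajectory $\varPhi_{n_{j}}$ vanishes in $(s,t]$, then (using that $\sigma^{s}_{x}(1)=s$ supplies two distinct zeros of $\theta_{s,\cdot}(0)$ in $(s,t)$) one obtains two local extrema of $W$ at the same level, again a single null event; hence some $\varPhi_{n_{j_{0}}}$ vanishes at some $r\in(s,t]$, coalescence gives $\theta_{s,\cdot}(0)[r,\infty)=\varPhi_{n_{j_{0}}}[r,\infty)$, and $r\leq t$ yields the required $n\in I^{t}$ with no backward step.
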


\begin{proof}
  Since \(\varPhi\) is a skeleton a.s., we can construct \(\theta\), the
  measurable continuous modification of $\psi^0$, as in
  Section~\ref{sec:measurable_modification_out_of_skeleton}.

  To prove this theorem we apply Theorem~\ref{thm:sfp_stochastic}. We
  take for closed shell of \(B(\varPhi)\) the set
  \(F=\mathbb{R}\times\{0\}\) and as in
  Section~\ref{sec:sf-meas-modif} define out of \(\theta\) the
  stopping times $\sigma^s_x(k)$ and the random variables $z^s_x(k)$,
  $k^s_x$. Note that \(z^{s}_{x}(0)=x\) and \(z^{s}_{x}(k)=0\) if
  \(k\geq{1}\). To apply Theorem~\ref{thm:sfp_stochastic}, we have to
  verify that a.s. for all \((s,x)\in\mathbb{R}\times\mathbb{R}\),
  $k^s_x<\infty$, and if $k^s_x=0$, then for every $t>s$ there is
  $n\in I^t$ such that
  \(\theta_{s,\cdot}(x)[t,\infty)=\varPhi_{n}[t,\infty)\).
  
  We note that with probability 1 for every
  $(s,x)\in \mathbb{R}\times \mathbb{R}$ and every $n\in I^s,$
  \begin{displaymath}
    s\leq a<b, \inf_{t\in [a,b]}\left|\theta_{s,t}(x)\right|>0
    \quad \Longrightarrow \quad
    \left|\theta_{s,t}(x)\right|
    = \left|\theta_{s,a}(x)\right|+W_n(t)-W_n(a),
    \ t\in [a,b].
  \end{displaymath}

  Suppose that $k^s_x\geq 3$ for some
  $(s,x)\in \mathbb{R}\times \mathbb{R}$ and let $n\in I^s$. Then
  $s=\sigma^s_x(0)<\sigma^s_x(1)<\sigma^s_x(2)<\sigma^s_x(3).$ It
  follows that $|\theta_{\sigma^s_x(j),t}(z^s_x(j))|>0$ for
  $t\in (\sigma^s_x(j),\sigma^s_x(j+1)),$ $j\in\{0,1,2\},$ and
  $z^s_x(1)=z^s_x(2)=0.$ Hence, $|x|+W_n(t)-W_n(s)>0$ for
  $t\in (s,\sigma^s_x(1)),$ $W_n(t)-W_n(\sigma^s_x(1))>0$ for
  $t\in (\sigma^s_x(1),\sigma^s_x(2)),$ $W_n(t)-W_n(\sigma^s_x(2))>0$
  for $t\in (\sigma^s_x(2),\sigma^s_x(3)),$ and
  $W_n(\sigma^s_x(1))=W_n(\sigma^s_x(2))=W_n(s)-|x|.$ It follows that
  $W_n$ has two local minima at the level $W_n(s)-|x|,$ which is a.s.
  impossible (see \cite{zbMATH01625446}). As a consequence, we have
  that a.s., \(k^{s}_{x}\leq{2}\) for all
  \((s,x)\in\mathbb{R}\times\mathbb{R}\).
  
  Suppose that $k^s_x=0$ for some
  $(s,x)\in \mathbb{R}\times \mathbb{R}$. Then $\sigma^s_x(1)=s$ and
  $x=0$. If $\varPhi_n(s)=0$ for some $n\in I^s$, then for all
  \(t>s\), $\theta_{s,t}(0)=\varPhi_n(t)$. Otherwise, there exists a
  sequence $(n_j:j\in \mathbb{N})$ in $I^s$, such that
  $\lim_{j\to\infty}\varPhi_{n_j}[s,\infty)=\theta_{s,\cdot}(0)$ in
  $X$. Without loss of generality we can assume that
  $\varPhi_{n_j}(s)>0$ for all $j\in \mathbb{N}$, and
  $\varPhi_{n_j}(s)\downarrow 0$, $j\to\infty$. Assume that
  $\inf_{r\in [s,t]}\varPhi_{n_j}(r)>0$ for all $j.$ Then
  $\varPhi_{n_j}(r)=\varPhi_{n_j}(s)+W_{n_1}(r)-W_{n_1}(s),$
  $r\in [s,t],$ and $\theta_{s,r}(0)=W_{n_1}(r)-W_{n_1}(s),$
  $r\in [s,t]$. It follows that
  \begin{displaymath}
    \inf_{r\in[s,t]}W_{n_1}(r)+\varPhi_{n_j}(s)-W_{n_1}(s)>0,
  \end{displaymath}
  and $W_{n_1}(s)=\inf_{r\in [s,t]}W_{n_1}(r)$. Since
  $s=\sigma^s_x(1)$, there exist distinct $u, v$ in $(s,t)$ such that
  $\theta_{s,u}(0)=\theta_{s,v}(0)=0$. Then $u$ and $v$ are both local
  extrema of $W_{n_1}$ with $W_{n_1}(u)=W_{n_1}(v)$. This is
  a.s. impossible (see \cite{zbMATH01625446}). Hence,
  $\varPhi_{n_{j_0}}(r)=0$ for some $j_0\geq 1$ and $r\in (s,t].$ It
  follows that $\varPhi_{n_j}(r)=0=\varPhi_{n_{j_0}}(r)$ for all
  $j\geq j_0,$ and
  $\theta_{s,\cdot}(0)[r,\infty)=\varPhi_{n_{j_0}}[r,\infty).$ This
  verifies the condition of Theorem \ref{thm:sfp_stochastic} and
  proves that $\psi$ is a strong measurable continuous modification of
  $\psi^0$.
\end{proof}

\subsection{Burdzy-Kaspi flows}
\label{subsec:BK flow}
In this section we consider a stochastic flow of measurable mappings
in $\mathbb{R}$ that consists of solutions to the Harrison-Shepp SDE
for the skew Brownian motion
\begin{equation}
  \mathrm{d}X(t)=\mathrm{d}W(t)+\beta \mathrm{d}L(t),
\end{equation}
where $W=(W(t):t\in\mathbb{R})$ is a Brownian motion on $\mathbb{R}$,
$L$ is the symmetric local time of $X$ at zero and $\beta\in
[-1,1]$. It is well-known that for all
$(s,x)\in \mathbb{R}\times \mathbb{R}$ the equation
\begin{equation}
  \label{eq:SBM_SDE}
  \begin{cases}
    X(t)=x+W(t)-W(s)+\beta L(t),  & t\geq s \\
    L(t)=\lim_{\varepsilon\to 0}\frac{1}{2\varepsilon}\int^t_s
    1_{(-\varepsilon,\varepsilon)}(X(r))\mathrm{d}r,  & t\geq s
  \end{cases}
\end{equation}
has a unique strong solution (see \cite{zbMATH03723641}). Define 
$$
\mathsf{P}^{(n)}_t(x,B)=\mathbb{P}[(X_1(t),\ldots,X_n(t))\in B],
$$
where $x\in \mathbb{R}^n$, $B\in \mathcal{B}(\mathbb{R}^n)$,
$t\geq 0$, and $X_i$ is the solution of \eqref{eq:SBM_SDE} with
initial condition $X_i(0)=x_i$, $i\in \{1,\ldots,n\}$. The sequence
$(\mathsf{P}^{(n)}_{\bullet{}}:n\in \mathbb{N})$ is then a sequence of
coalescing Feller transition functions that satisfies
(TF~\ref{item:consistency}), (TF~\ref{item:coalescing_property}) and
(TF~\ref{item:continuity_of_trajectories_TF3}). Let $\psi^0$ be a
stochastic flow of measurable mappings in $\mathbb{R}$ associated to
$(\mathsf{P}^{(n)}_{\bullet{}}:n\in \mathbb{N})$. Let
$\mathcal{D}=\{(s_n,x_n):n\in\mathbb{N}\}$ be a countable dense set in
$\mathbb{R}\times\mathbb{R}$, and let $\varPhi$ be the random sequence
constructed in Section~\ref{sec:rand-sequ-varphi}. Recall that for
each \(n\), $\varPhi_n$ is a continuous modification of
$\psi^0_{s_n,\cdot}(x_n)$ with $\varPhi_n(s_n)=x_n$. By
\cite[Prop. 2.1]{zbMATH02148685}, the condition (TF~\ref{item:14}) is
satisfied and $\varPhi$ is a.s. a skeleton. Denote
$L_n(t)=\lim_{\varepsilon\to 0}\frac{1}{2\varepsilon}\int^t_s
1_{(-\varepsilon,\varepsilon)}(\varPhi_n(r))dr,$
$W_n(t)=\varPhi_n(t)-x_n-\beta L_n(t),$ $t\geq s_n.$ Note that a.s.
\begin{equation}
  \label{eq:HS_same_B}
  (n,m)\in \mathbb{N}^2,\, s\geq s_n\vee s_m
  \quad  \Longrightarrow \quad
  W_n(t)-W_n(s)=W_m(t)-W_m(s),\, t\geq s.
\end{equation}

\begin{theorem}
  \label{thm:BK_strong_flow}
  There exists a strong measurable continuous modification of $\psi^0$. 
\end{theorem}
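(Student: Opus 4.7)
The approach mirrors the proof of Theorem~\ref{thm:Tanaka_strong_flow} but is substantially simpler, because the Harrison--Shepp SDE~\eqref{eq:SBM_SDE} admits a unique strong solution (\cite{zbMATH03723641}) that depends continuously on the initial condition. As a consequence the bifurcation set $B(\varPhi)$ turns out to be empty a.s., and the measurable continuous modification $\theta$ itself will already be a strong flow, without any need to alter it at bifurcation points.

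Since $\varPhi$ is a.s. a skeleton, I would first apply Theorem~\ref{thm:measurable_modification} to construct a measurable continuous modification $\theta$ of $\psi^{0}$ that preserves $\varPhi$. The heart of the proof is to show that $B(\varPhi)=\emptyset$ a.s. Fix $(s,x)\in\mathbb{R}\times\mathbb{R}$, $t>s$ and any $n_{0}\in I^{s}$. By~\eqref{eq:HS_same_B}, for every $n\in I^{s}$ the restriction $\varPhi_{n}[s,t]$ is the strong solution of~\eqref{eq:SBM_SDE} on $[s,t]$ with initial condition $\varPhi_{n}(s)$ driven by $W_{n_{0}}$. Combining pathwise uniqueness with continuous dependence of this strong solution on the initial condition (a consequence of the Burdzy--Kaspi flow construction, \cite[Prop.~2.1]{zbMATH02148685}), any sequence $(\varPhi_{n_{k}})$ with $n_{k}\in I^{s}$ and $\varPhi_{n_{k}}(s)\to x$ satisfies $\varPhi_{n_{k}}[s,t]\to X_{s,x,W_{n_{0}}}[s,t]$ uniformly on $[s,t]$, where $X_{s,x,W_{n_{0}}}$ denotes the unique strong solution of~\eqref{eq:SBM_SDE} starting from $x$ at time $s$ and driven by $W_{n_{0}}$. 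Hence $\mathcal{K}^{s,t}_{x}$ is the singleton $\{X_{s,x,W_{n_{0}}}[s,t]\}$ for every $t>s$, so $\tau^{s}_{x}=\infty$ and $(s,x)\notin B(\varPhi)$. The main technical point, which I would handle by working directly with the jointly continuous strong flow provided by Burdzy--Kaspi, is to arrange this conclusion simultaneously for all $(s,x)$ on a single event of full probability, rather than for each $(s,x)$ separately.

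Having established $B(\varPhi)=\emptyset$ a.s., I take $F=\emptyset$ as a closed shell of $B(\varPhi)$ and apply Theorem~\ref{thm:sfp_stochastic}. For this choice, $\sigma^{s}_{x}(1)=\inf\emptyset=\infty$ for every $(s,x)$, hence $k^{s}_{x}=1<\infty$ and the second hypothesis of Theorem~\ref{thm:sfp_stochastic} (on the case $k^{s}_{x}=0$) is vacuous. Theorem~\ref{thm:sfp_stochastic} therefore produces a strong measurable continuous modification $\psi$ of $\psi^{0}$; applying the definition of $\psi$ out of $\theta$ with $k=0<k^{s}_{x}$ and $t\in[\sigma^{s}_{x}(0),\sigma^{s}_{x}(1))=[s,\infty)$ gives $\psi_{s,t}(x)=\theta_{s,t}(x)$ for all $s\leq t$, so $\theta$ itself is a strong measurable continuous modification of $\psi^{0}$.
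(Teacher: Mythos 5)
There is a genuine gap, and it is located exactly where you defer the difficulty: the claim that $B(\varPhi)=\emptyset$ a.s.\ is false for $\beta\neq 0$. Pathwise uniqueness for~\eqref{eq:SBM_SDE} gives only that \emph{for each fixed} $(s,x)$, almost surely $\mathcal{K}^{s,t}_{x}$ is a singleton (indeed, by monotonicity of the flow the upper and lower limits of skeleton trajectories through $(s,x)$ are both solutions started at $(s,x)$, hence a.s.\ equal). This does not upgrade to ``a.s., for all $(s,x)$ simultaneously'', because the exceptional null set depends on $(s,x)$ and there are uncountably many such points. In fact the upgrade is impossible: the main result of \cite{zbMATH02148685} is precisely that the skew Brownian flow a.s.\ contains \emph{lenses}, i.e.\ random space--time points $(s,0)$ from which two distinct solutions driven by the same noise emanate. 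These are exactly bifurcation points in the sense of Definition~\ref{def:bifurcation_points}, so $B(\varPhi)\neq\emptyset$ a.s.\ whenever $\beta\neq 0$. Relatedly, the ``continuous dependence on the initial condition'' you invoke does not hold: the Burdzy--Kaspi flow $x\mapsto X^{s,x}_{t}$ is monotone but has jumps, and \cite[Prop.~2.1]{zbMATH02148685} (which the paper uses only to verify (TF~\ref{item:14})) does not assert such continuity. Consequently your choice $F=\emptyset$ is not a closed shell of $B(\varPhi)$, and the conclusion that $\theta$ itself is a strong flow cannot be reached this way.

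The paper's proof takes the route your simplification was meant to avoid. It shows (as in Lemma~\ref{lem:Tanaka_shell}) that $F=\mathbb{R}\times\{0\}$ is a closed shell of $B(\varPhi)$, and then runs the full machinery of Section~\ref{sec:sf-meas-modif}: it proves $k^{s}_{x}\leq 2$ a.s.\ for all $(s,x)$ by translating the event $k^{s}_{x}\geq 3$ into the existence of two local extrema of $W$ at the same level (a.s.\ impossible), and it handles the case $k^{s}_{x}=0$ (which forces $x=0$ and $\sigma^{s}_{x}(1)=s$) via a local-time argument and \cite[Lemma 2.7]{zbMATH02148685} to produce an $n\in I^{t}$ with $\theta_{s,\cdot}(x)[t,\infty)=\varPhi_{n}[t,\infty)$. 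Only then does Theorem~\ref{thm:sfp_stochastic} apply, and the resulting strong modification $\psi$ genuinely differs from $\theta$ at trajectories passing through $0$. If you want to salvage your argument, you would need to replace ``$B(\varPhi)=\emptyset$'' by ``$B(\varPhi)\subset\mathbb{R}\times\{0\}$'' and then supply the two verifications above; there is no shortcut around the bifurcation analysis for this flow.
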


\begin{proof}
  
  Similarly to the proof of Lemma~\ref{lem:Tanaka_shell} it can be
  checked that $F=\mathbb{R}\times \{0\}$ is a closed shell of
  \(B(\varPhi)\), the set of bifurcation points of $\varPhi$. We then
  construct \(\theta\), the measurable continuous modification of
  $\psi^0$, as in
  Section~\ref{sec:measurable_modification_out_of_skeleton}.

  As in Section~\ref{sec:sf-meas-modif} we define out of \(\theta\)
  the stopping times $\sigma^s_x(k)$ and the random variables
  $z^s_x(k)$, $k^s_x$. Note \(z^{s}_{x}(0)=x\) and \(z^{s}_{x}(k)=0\)
  if \(k\geq 1\). To apply Theorem~\ref{thm:sfp_stochastic}, we have
  to verify that a.s. for all \((s,x)\in\mathbb{R}\times\mathbb{R}\),
  $k^s_x<\infty$, and if $k^s_x=0$, then for every $t>s$ there is
  $n\in I^t$ such that
  \(\theta_{s,\cdot}(x)[t,\infty)=\varPhi_{n}[t,\infty)\).
  
  The case $\beta=0$ is trivial, and in this case
  $\psi_{s,t}(x)=\theta_{s,t}(x)=x+W_m(t)-W_m(s)$ for all $s\leq t,$
  $x\in \mathbb{R}$ and $m\in I^s$. We consider the case $\beta\ne 0$.
  
  Suppose that $k^s_x\geq 3$ for some
  $(s,x)\in \mathbb{R}\times \mathbb{R}$ and $n\in I^s$. Then
  $s=\sigma^s_x(0)<\sigma^s_x(1)<\sigma^s_x(2)<\sigma^s_x(3)$. It
  follows that $\theta_{\sigma^s_x(j),t}(z^s_x(j))\ne 0$ for
  $t\in (\sigma^s_x(j),\sigma^s_x(j+1))$, $j\in\{0,1,2\},$ and
  $z^s_x(1)=z^s_x(2)=0$. Hence, $x+W_{n}(t)-W_{n}(s)\ne 0$ for
  $t\in (s,\sigma^s_x(1))$, $W_{n}(t)-W_{n}(\sigma^s_x(1))\ne 0$ for
  $t\in (\sigma^s_x(1),\sigma^s_x(2))$,
  $W_{n}(t)-W_{n}(\sigma^s_x(2))\ne 0$ for
  $t\in (\sigma^s_x(2),\sigma^s_x(3)),$ and
  $W_{n}(\sigma^s_x(1))=W_{n}(\sigma^s_x(2))=W_{n}(s)-x$. The Brownian
  motion has no points of increase or decrease
  \cite{zbMATH03179897}. Hence, $W_{n}$ has two local extrema at the
  level $W_{n}(s)-x$, which is a.s. impossible (see
  \cite{zbMATH01625446}). As a consequence, we have that a.s.,
  \(k^{s}_{x}\leq{2}\) for all \((s,x)\in\mathbb{R}\times\mathbb{R}\).
  
  Suppose that $k^s_x=0$ for some
  $(s,x)\in\mathbb{R}\times\mathbb{R}$. Then $\sigma^s_x(1)=s$ and
  $x=0$. Let us fix \(m\in I^{s}\). If $\varPhi_n(s)=0$ for some
  $n\in I^s$, then for all \(t>s\),
  $\theta_{s,t}(0)=\varPhi_n(t)$. Otherwise, either
  \begin{displaymath}
    \theta_{s,t}(0)=\inf\{\varPhi_{n}(t): n\in I^s, s_n<x,
    \varPhi_n(s)>x\}, \ \forall t\geq s,
  \end{displaymath}
  or 
  \begin{displaymath}
    \theta_{s,t}(0)=\sup\{\varPhi_{n}(t): n\in I^s, s_n<x,
    \varPhi_n(s)<x\}, \ \forall t\geq s.
  \end{displaymath}

  In either case, $\theta_{s,t}(0)=W_m(t)-W_m(s)+\beta L_{s,t}$, where
  $L_{s,t}= \lim_{\varepsilon\to{}0} \frac{1}{2\varepsilon}
  \int^t_s1_{(-\varepsilon.\varepsilon)}(\theta_{s,r}(0)) \mathrm{d}r$
  (see \cite[Prop. 1.1]{zbMATH02148685}). Since $\sigma^s_x(1)=s$,
  there exists a sequence $u_1>u_2>\ldots$ in $(s,t),$ such that
  $\theta_{s,u_n}(0)=0$ for all $n\in \mathbb{N}$. If $L_{s,t}=0,$
  then $L_m(t)-L_m(s)=0$. Hence, the Brownian motion $W_m$ hits the
  level $a=W_m(s)$ infinitely often in the interval $(s,t),$ while its
  local time at the level $a$ does not increase between times $s$ and
  $t,$ which is impossible by \cite{zbMATH03192471}.  It follows that
  $L_{s,t}>0.$ By \cite[Lemma 2.7]{zbMATH02148685} there exists
  $n\in I^t,$ such that
  $\theta_{s,\cdot}(x)[t,\infty)=\varPhi_n[t,\infty).$ This verifies
  the condition of Theorem \ref{thm:sfp_stochastic}, and proves that
  $\psi$ is a strong measurable continuous modification of $\psi^0$.
\end{proof}

\subsection{Coalescing Tanaka flow on a star graph}
\label{subsec:Tanaka_graph}

In this section, we let $M$ be a star graph, i.e. a metric graph with
only one vertex $0$ and \(d\geq 2\) edges $E_1,\ldots,E_{d}$, and we
consider a stochastic flow of measurable mappings in $M$, that
consists of solutions to Tanaka's SDE on $M$.  For each
$j\in \{1,\ldots,d\}$ there is a bijection $e_j:(0,\infty)\to E_j$
such that $e_j(0+)=0$. When $x=e_j(r)$, we set $|x|=r$ and define the
distance on $M$ by:
\begin{displaymath}
  \rho(e_i(r),e_j(s))=
  \begin{cases}
    |r-s|, & i=j \\
    r+s, & i\ne j
  \end{cases}
\end{displaymath}

Denote by $p_1,\ldots,p_{d}\in [0,1]$ the transmissions parameters
associated to the vertex $0$ and edges $E_1,\ldots,E_d,$ respectively.
For any function $f:M\to \mathbb{R}$ we introduce functions
$f_j=f\circ e_j:(0,\infty)\to \mathbb{R},$ $j\in \{1,\ldots,d\}$.
Denote by $C^2_0(M)$ the space of continuous functions $f:M\to \mathbb{R},$ such
that for all $j\in\{1,\ldots,d\},$ $f_j\in C^2((0,\infty)),$
$f_j,f'_j,f''_j$ are bounded, and all limits $f_j(0+),$ $ f'_j(0+),$
$f''_j(0+)$ exist. For $f\in C^2_0(M)$ we let $f'(x)=f'_j(|x|),$
$f''(x)=f''_j(|x|),$ if $x\in E_j$. Let
$D=\{f\in C^2_0(M): \sum^d_{j=1}p_j f'_j(0+)=0\}.$ To each vertex
$E_j$ we associate a sign $\varepsilon_j\in\{-1,1\},$ in such a way
that $\varepsilon_1=\ldots=\varepsilon_l=1,$
$\varepsilon_{l+1}=\ldots=\varepsilon_d=-1,$ $l\in \{1,\ldots, d-1\}$.
Let
\begin{displaymath}
  \varepsilon(x)=
  \begin{cases}
    1,  &\ x\in E_j, \ j\leq l \\
    -1, &\ x\in E_j, \ j>l \mbox{ or } x=0
  \end{cases}.
\end{displaymath}
There exists a stochastic flow $\psi^0$ of measurable mappings in $M$,
such that for all $(s,x)\in \mathbb{R}\times M$ and all $f\in D$,
\begin{displaymath}
  f(\psi^0_{s,t}(x)) =
  f(x) +
  \int_{s}^{t} \varepsilon(\psi^0_{s,r}(x))f'(\psi^0_{s,r}(x))\mathrm{d}W(r)
  +\frac{1}{2}\int^t_s f''(\psi^0_{s,r}(x))\mathrm{d}r, \ t\geq s,
\end{displaymath}
where $W$ is a Brownian motion on $\mathbb{R}$ \cite{zbMATH06049114}.
Let $\mathcal{D}=\{(s_n,x_n):n\in\mathbb{N}\}$ be a countable dense
set in $\mathbb{R}\times\mathbb{R}$. For each \(n\in\mathbb{N}\), we
let $\varPhi_n$ be a continuous modification of
$\psi^0_{s_n,\cdot}(x_n)$ with $\varPhi_n(s_n)=x_n$. The proof of
Lemma \ref{lem:Tanaka_TF4} shows that $\varPhi$ is a.s. a skeleton.

\begin{theorem}
  \label{thm:Tanaka_graph_strong_flow}
  There exists a strong measurable continuous modification of $\psi^0$. 
\end{theorem}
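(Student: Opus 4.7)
The plan is to apply Theorem~\ref{thm:sfp_stochastic} with the closed shell $F=\mathbb{R}\times\{0\}$, following the blueprint of the proof of Theorem~\ref{thm:Tanaka_strong_flow}. First, $\varPhi$ is a.s.\ a skeleton: all trajectories $\varPhi_n$ are driven by a common Brownian motion $W$ (recovered from the defining SDE as $\int_{s_n}^{\cdot}\varepsilon(\varPhi_n(r))\,d|\varPhi_n(r)|$ along the edges), so the radial coordinates $|\varPhi_n|$ share a common modulus of continuity on compact time intervals, and the argument of Lemma~\ref{lem:Tanaka_TF4} yields property (Sk~\ref{item:rel_comp}). Next, $F$ is a closed shell of $B(\varPhi)$: for $x\in E_j$ at time $s$, skeleton trajectories starting near $(s,x)$ remain in $E_j$ briefly and obey $|\varPhi_n(t)|=|\varPhi_n(s)|+\varepsilon_j(W(t)-W(s))$, so $\mathcal{K}^{s,t}_x$ collapses to a single function for small $t-s$ and $(s,x)\notin B(\varPhi)$. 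I would then construct $\theta$ via Section~\ref{sec:meas-modif}, and define the stopping times $\sigma^s_x(k)$, the points $z^s_x(k)$ and the index $k^s_x$ as in Section~\ref{sec:sf-meas-modif}; note $z^s_x(0)=x$ and $z^s_x(k)=0$ for $k\ge 1$.

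The key structural identity is that on each interval $(\sigma^s_x(j),\sigma^s_x(j+1))$ the trajectory lies on some edge $E_{i_j}$ with sign $\varepsilon_{i_j}\in\{\pm 1\}$, so its radial coordinate satisfies $|\theta_{\sigma^s_x(j),t}(z^s_x(j))|=\varepsilon_{i_j}(W(t)-W(\sigma^s_x(j)))$ (with an additional $|x|$ offset when $j=0$). Consequently $W(\sigma^s_x(k))=W(\sigma^s_x(1))$ for all $k\ge 1$, and on each excursion $W$ stays strictly on one side of this common level, the side being dictated by $\varepsilon_{i_j}$. Whenever two consecutive signs agree, the intermediate $\sigma^s_x(\cdot)$ becomes a local extremum of $W$ at the level $W(\sigma^s_x(1))$.

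To verify condition~(a) of Theorem~\ref{thm:sfp_stochastic}, namely $k^s_x<\infty$ a.s., I would argue that on an event of full probability the sign sequence $(\varepsilon_{i_k})_{k\ge 0}$ cannot avoid two consecutive same-sign pairs indefinitely: since at each vertex visit the transition parameters $p_j$ give a positive conditional probability of either sign being selected, a Borel--Cantelli argument produces indices $j<k$ with $\varepsilon_{i_{j-1}}=\varepsilon_{i_j}$ and $\varepsilon_{i_{k-1}}=\varepsilon_{i_k}$ whenever $k^s_x=\infty$. This yields two local extrema of $W$ at the common level $W(\sigma^s_x(1))$, contradicting \cite{zbMATH01625446}. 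For condition~(b), when $k^s_x=0$ we have $x=0$ and $\sigma^s_x(1)=s$, so $\theta_{s,\cdot}(0)$ returns to $0$ on every right-neighborhood of $s$; following the argument in the proof of Theorem~\ref{thm:Tanaka_strong_flow}, either some $\varPhi_n$ already passes through $(s,0)$ (done), or an approximating sequence $\varPhi_{n_j}\to\theta_{s,\cdot}(0)$ must eventually hit $0$ on $(s,t]$ (else the same two-local-extrema obstruction for $W$ arises), providing the coalescing skeleton index.

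The main obstacle is rigorously establishing the finiteness of $k^s_x$: on $\mathbb{R}$ every excursion of the Tanaka flow automatically has $W$ above the reference level and so $k^s_x\le 2$ follows from elementary pigeonhole, but on the star graph the mixed signs $\varepsilon_j$ allow arbitrarily long alternating excursion patterns. The finiteness must therefore be extracted by combining the probabilistic structure of the edge selection at the vertex (governed by the $p_j$) with the classical a.s.\ absence of two equal-level local extrema of Brownian motion.
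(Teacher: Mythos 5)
Your overall strategy (apply Theorem~\ref{thm:sfp_stochastic} with the shell $F=\mathbb{R}\times\{0\}$, exploit the single driving Brownian motion $W$, reduce everything to level-set properties of $W$) is the right one, and your identification of the radial dynamics $|\theta(t)|=\varepsilon_{i_j}(W(t)-W(\sigma^s_x(j)))$ on each excursion is correct. But the step where you establish $k^s_x<\infty$ is genuinely broken. A Borel--Cantelli argument on the sign selected at successive vertex visits can at best produce a null exceptional set \emph{for each fixed} $(s,x)$; condition~(a) of Theorem~\ref{thm:sfp_stochastic} requires $k^s_x<\infty$ simultaneously for all of the uncountably many $(s,x)$, so per-point probabilistic estimates cannot close the argument (this is exactly why the one-dimensional proofs in the paper are written entirely in terms of pathwise properties of $W$). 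Moreover the ``positive conditional probability of either sign'' claim is itself unsupported: the times $\sigma^s_x(k)$ are defined pathwise from $\theta$, and no Markov/independence structure for the induced sign sequence is established.

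The mechanism that actually kills the alternating-sign scenario is deterministic in $W$: if $\varepsilon_{i_j}\neq\varepsilon_{i_{j+1}}$ then $W-W(\sigma^s_x(1))$ changes sign across $\sigma^s_x(j+1)$ while staying nonzero on the two adjacent open excursion intervals, so $\sigma^s_x(j+1)$ is a point of increase or decrease of $W$ --- a.s.\ impossible for the whole path by Dvoretzky--Erd\H{o}s--Kakutani (\cite{zbMATH03179897}, as used in the proof of Theorem~\ref{thm:BK_strong_flow}). Hence every intermediate $\sigma^s_x(j)$ is a local extremum of $W$ at the common level $W(\sigma^s_x(1))$, and two such extrema at one level are a.s.\ impossible by \cite{zbMATH01625446}; this gives $k^s_x\le 2$ for all $(s,x)$ at once. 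The paper packages this by folding the star graph onto $\mathbb{R}$ via $G(x)=\varepsilon(x)|x|$, under which $G(\psi^0_{s,t}(x))=Y_{s,t}(G(x))$ for a Burdzy--Kaspi flow $Y$, and imports the bound from Theorem~\ref{thm:BK_strong_flow}. You should also tighten case $k^s_x=0$: on a star graph the approximating points $\varPhi_{n_j}(s)$ may lie on different edges, so the ordering argument from the real-line Tanaka proof does not directly show that all $\varPhi_{n_j}$ hit $0$ at a common time; the paper instead shows the local time of $Y_{s,\cdot}(0)$ at $0$ is positive and invokes \cite[Prop.~1.1(iii)]{zbMATH02148685} to get local finiteness of $\{\varPhi_n(t)\}$ near $0$, forcing the approximating sequence $\varPhi_{n_k}[t,\infty)$ to be eventually stationary.
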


\begin{proof}
  Similarly to the proof of Lemma~\ref{lem:Tanaka_shell} it can be
  checked that $F=\mathbb{R}\times \{0\}$ is a closed shell of
  $B(\varPhi)$, the set of bifurcation points of $\varPhi$. Let
  $\theta$ be the measurable continuous modification of $\psi^0$
  defined from $\varPhi$ and the mapping $\Theta$. As in
  Section~\ref{sec:sf-meas-modif} we define out of $\theta$ the
  stopping times $\sigma^s_x(k)$, and the random variables $z^s_x(k),$
  $k^s_x.$ Consider the mapping $G:M\to \mathbb{R}$,
  $G(x)=\varepsilon(x)|x|$. To have notations consistent with
  \cite{zbMATH06049114}, we now suppose without loss of generality
  that $ \mathcal{D}=\mathbb{Q}\times M_{\mathbb{Q}}$, where
  $M_{\mathbb{Q}}$ is a countable dense set in $M$. Out of the random
  skeleton $\varPhi$, we construct a family of random mappings
  $\mathcal{Y}$ such that $\mathcal{Y}_n =G(\varPhi_n)$. The family
  $\mathcal{Y}$ is a random skeleton of a Burdzy-Kaspi flow (see
  \cite{zbMATH06049114}, in which $\psi^0$ is constructed with a
  Burdzy-Kaspi flow). Let us now define $Y,$ the Burdzy-Kaspi flow,
  out of the skeleton $\mathcal{Y}$ in the same way as $\psi^0$ is
  constructed out of $\varPhi$ (note that
  $G(\psi^0_{s,t}(x)) =Y_{s,t}(G(x))$). The proof of
  Theorem~\ref{thm:BK_strong_flow} implies that a.s. for all
  $(s,x)\in \mathbb{R}\times M,$ $k^s_x\leq 2$ (the random variables
  $k^s_x$ and $k^s_{\varepsilon(x)|x|}$ are equal, where 
  $k^s_{\varepsilon(x)|x|}$ is defined out of $Y$).

  Suppose that $k^s_x= 0$. Then $x= 0$ and
  $\sigma^s_x(1)=s$. The proof of  Theorem~\ref{thm:BK_strong_flow} implies that a.s. the local time  at zero $L_{s,r}$ of $Y_{s,\cdot}(0)$ is positive for all $r>s.$  Following the proof of \cite[Prop. 1.1(iii)]{zbMATH02148685}, for all  small
  enough positive $\varepsilon,$
  $\{\varPhi_n(t) : \ n\in I^s, \rho(\varPhi_n(s),0)< \varepsilon\}$
  is a finite set.  Since there is a sequence
  $(n_k: \ k\in \mathbb{N}),$ such that $\varPhi_{n_k}[s,\infty)$
  converges towards $\theta_{s,\cdot}(0)$ in $X$, the sequence
  $\varPhi_{n_k}[t,\infty)$ is stationary. This shows that there is
  $n\in I^t$ in such that $\theta_{s,u}(0) =\varPhi_n(u)$ for all
  $u\geq t$. Hence, conditions of Theorem~\ref{thm:sfp_stochastic} are
  verified and there exists a strong measurable continuous
  modification of $\psi^0.$
\end{proof}

The following Corollary extends the result of Theorem~\ref{thm:Tanaka_graph_strong_flow} to general metric graphs. For the definition of the Tanaka SDE on a metric graph we refer to \cite{zbMATH06257629}. 

\begin{corollary}
\label{cor:Tanaka_metric_graph}    There exists a strong measurable continuous modification of a stochastic flows of measurable mappings in a metric graph $M$, whose trajectories are solutions to the Tanaka SDE on $M.$
\end{corollary}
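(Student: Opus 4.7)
The plan is to mimic the argument of Theorem~\ref{thm:Tanaka_graph_strong_flow}, carried out locally at each vertex, with closed shell $F := \mathbb{R}\times V$. Let $\psi^{0}$ be a stochastic flow of measurable mappings in $M$ whose trajectories solve the Tanaka SDE on $M$ (see \cite{zbMATH06257629}), and let $\varPhi = (\varPhi_{n}:n\in\mathbb{N})$ be the random sequence constructed from $\psi^{0}$ in Section~\ref{sec:rand-sequ-varphi}.

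The first step is to verify that $\varPhi$ is a.s. a skeleton; the argument of Lemma~\ref{lem:Tanaka_TF4} adapts, as on each edge the Tanaka SDE is driven by a single Brownian motion $W$ shared (up to signs) by all trajectories of $\varPhi$ passing through that edge, and the Tanaka-type local time formulas at the vertices give a modulus of continuity for $\varPhi_{n}$ on compacts controlled by that of $W$, yielding (Sk~\ref{item:rel_comp}). The second step is to check that $F = \mathbb{R}\times V$ is a closed shell of $B(\varPhi)$: $V$ is closed in $M$ (since $\bigcup_{j}E_{j}$ is open), and if $(s,x)\in\mathbb{R}\times(M\setminus V)$ then $x$ lies in the interior of some edge $E_{j}$, so on a short time interval skeleton trajectories starting near $x$ are rigid translates of $W$ along $E_{j}$; hence $\mathcal{K}^{s,t}_{x}$ is a singleton for small $t>s$ and $(s,x)\notin B(\varPhi)$.

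The third and main step is to verify the two hypotheses of Theorem~\ref{thm:sfp_stochastic}. The bound $k^{s}_{x}\leq 1$ a.s. follows as in the proof of Theorem~\ref{thm:Tanaka_graph_strong_flow}: if $x\notin V$, then $\sigma^{s}_{x}(1)<\infty$ is the first hit of $V$, and at $z^{s}_{x}(1)\in V$ the process is locally a Tanaka flow on the star of edges adjacent to this vertex, for which the vertex is regular, so $\sigma^{s}_{x}(2)=\sigma^{s}_{x}(1)$; if $x\in V$ then $\sigma^{s}_{x}(1)=s$ and $k^{s}_{x}=0$. For the matching condition when $k^{s}_{x}=0$, fix $x\in V$ and $t>s$, choose a simple compact neighborhood $U$ of $x$ containing no other vertex, and let $\tau_{U}$ be the first exit time of $\theta_{s,\cdot}(x)$ from $U$. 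On $[s,\tau_{U})$ the restriction of the flow to $U$ agrees with a Tanaka flow on the star graph of edges at $x$, and the Burdzy-Kaspi argument in the proof of Theorem~\ref{thm:Tanaka_graph_strong_flow} applies: either some $\varPhi_{n}$ passes through $x$ at time $s$ and coincides with $\theta_{s,\cdot}(x)$ on $[s,\tau_{U})$ by (Sk~\ref{item:coalescence}), or the local time of $\theta_{s,\cdot}(x)$ at $x$ is a.s. positive on every $(s,r)$, so that \cite[Prop.~1.1(iii)]{zbMATH02148685} yields $n\in I^{t\wedge\tau_{U}}$ with $\theta_{s,\cdot}(x)[t\wedge\tau_{U},\tau_{U})=\varPhi_{n}[t\wedge\tau_{U},\tau_{U})$. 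By Lemma~\ref{lem:stopping_time_flow} applied at $\tau_{U}$, together with the skeleton preservation by $\theta$, this equality extends to $[t\wedge\tau_{U},\infty)$; for $t\geq\tau_{U}$ the trajectory lies in an edge and the matching persists by the Brownian-type rigidity already used in the second step.

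The hardest part will be the localization: transferring the Burdzy-Kaspi analysis of Theorem~\ref{thm:Tanaka_graph_strong_flow} from the whole star graph to the restriction of $\theta$ to a simple neighborhood of a vertex, and in particular identifying the local time of $\theta_{s,\cdot}(x)$ at $x$ in this neighborhood with the one appearing in the star graph argument. Once this is done, Theorem~\ref{thm:sfp_stochastic} yields the desired strong measurable continuous modification of $\psi^{0}$.
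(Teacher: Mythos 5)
The paper offers no proof of this corollary at all --- it is stated as an unproved extension of Theorem~\ref{thm:Tanaka_graph_strong_flow} --- so your plan can only be compared with the evident intent, namely to rerun the star-graph argument with the closed shell $F=\mathbb{R}\times V$ and to verify the two hypotheses of Theorem~\ref{thm:sfp_stochastic}. Your overall strategy (skeleton property via the modulus-of-continuity argument of Lemma~\ref{lem:Tanaka_TF4}, shell $F=\mathbb{R}\times V$, localization at each vertex for the matching condition) is the natural one and is consistent with that intent.

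There is, however, a genuine error in your treatment of the bound on $k^{s}_{x}$. You claim $k^{s}_{x}\leq 1$ because ``at $z^{s}_{x}(1)\in V$ the vertex is regular, so $\sigma^{s}_{x}(2)=\sigma^{s}_{x}(1)$,'' and likewise that $x\in V$ forces $\sigma^{s}_{x}(1)=s$. Regularity of the vertex is an almost-sure statement for a \emph{fixed} starting point and a \emph{fixed} (or stopping) time; the hypotheses of Theorem~\ref{thm:sfp_stochastic} must hold simultaneously for the uncountable family of all $(s,x)$, and there the claim fails. Already for the coalescing Tanaka flow on $\mathbb{R}$ (the case $d=2$): if $s$ is chosen so that $\sigma^{s}_{x}(1)$ is a local minimum time of the driving Brownian motion $W$ (and such $(s,x)$ always exist, since every local minimum of $W$ is the first hitting time of its level from some nearby starting configuration), then $|\theta_{\sigma^{s}_{x}(1),t}(0)|=W(t)-\inf_{[\sigma^{s}_{x}(1),t]}W>0$ for $t$ in a right neighbourhood of $\sigma^{s}_{x}(1)$, so $\sigma^{s}_{x}(2)>\sigma^{s}_{x}(1)$ and $k^{s}_{x}=2$. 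This is precisely why the proofs of Theorems~\ref{thm:Tanaka_strong_flow}, \ref{thm:BK_strong_flow} and \ref{thm:Tanaka_graph_strong_flow} only establish $k^{s}_{x}\leq 2$, and do so not by regularity but by showing that $k^{s}_{x}\geq 3$ would force the driving Brownian motion to have two local extrema at the same level, which is a.s.\ impossible by \cite{zbMATH01625446}. Your plan should replace the regularity argument by this one (applied to the radial part $G(\theta)$ near each vertex); the weaker bound $k^{s}_{x}\leq 2$ is all that Theorem~\ref{thm:sfp_stochastic} requires. A secondary caveat of the same nature: Lemma~\ref{lem:stopping_time_flow}, which you invoke at $\tau_{U}$ to propagate the matching beyond the exit time of the simple neighbourhood, is also a statement for fixed $(s,x)$; the paper's own matching argument in Theorem~\ref{thm:Tanaka_graph_strong_flow} instead proceeds pathwise through the finiteness of $\{\varPhi_{n}(t):n\in I^{s},\,\rho(\varPhi_{n}(s),x)<\varepsilon\}$ and the eventual stationarity of the approximating sequence, and you should do the same after localization.
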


\begin{remark}
  \label{rem:interface_SDE} We expect that the proposed approach
  implies the existence of strong measurable continuous modifications
  of stochastic flows in metric graphs, that consist of solutions to interface SDE's in the sense of \cite{zbMATH06518035}.
  
\end{remark}

\section{Appendix. Measurable selection Lemma}
\label{sec:selection}

\begin{lemma}
  \label{lem:selection}
  Let $(X,d)$ be a complete separable metric space. Then, there exists a
  measurable function $\ell:X^\mathbb{N}\to X$ such that
  $$
  \overline{\cup^\infty_{n=1}\{x_n\}} \mbox{ compact } \Rightarrow
  \ell((x_n: n\in \mathbb{N})) \mbox{ is a limit point of } (x_n: n\in \mathbb{N}).
  $$
\end{lemma}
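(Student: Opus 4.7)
The plan is to construct $\ell$ as a measurable implementation of the classical ``extract a Cauchy subsequence'' argument. Fix once and for all a countable dense subset $D = \{d_j : j \in \mathbb{N}\}$ of $X$ and an arbitrary fallback point $d_1 \in X$. Given $x = (x_n)_{n \in \mathbb{N}} \in X^{\mathbb{N}}$, I recursively construct a decreasing chain of subsets $A_0(x) \supseteq A_1(x) \supseteq \cdots$ of $\mathbb{N}$ together with indices $j_k(x) \in \mathbb{N}$: set $A_0(x) = \mathbb{N}$ and, at step $k$, let $j_{k+1}(x)$ be the smallest $j$ for which
\[
  A_{k+1}(x) := \{n \in A_k(x) : d(x_n, d_j) \leq 2^{-(k+1)}\}
\]
is infinite; if no such $j$ exists, the construction aborts.

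When $\overline{\{x_n : n \in \mathbb{N}\}}$ is compact it is totally bounded, so for each $k$ finitely many closed balls of radius $2^{-(k+1)}$ centered at points of $D$ cover it, and by pigeonhole the construction never aborts and every $A_k(x)$ remains infinite. Consequently $d(d_{j_k(x)}, d_{j_{k+1}(x)}) \leq 2^{-k} + 2^{-(k+1)}$, so the sequence $(d_{j_k(x)})_{k \geq 1}$ is Cauchy in the complete space $X$. I define $\ell(x)$ to be its limit whenever the construction runs indefinitely and $(d_{j_k(x)})_k$ converges, and $\ell(x) = d_1$ otherwise. Choosing strictly increasing $n_k \in A_k(x)$ (possible since each $A_k(x)$ is infinite) then yields $d(x_{n_k}, \ell(x)) \leq 2^{-k} + d(d_{j_k(x)}, \ell(x)) \to 0$, so $\ell(x)$ is a limit point of $(x_n)$.

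The main technical step will be Borel measurability of $\ell : X^{\mathbb{N}} \to X$, which I will establish by induction on $k$, showing simultaneously that $x \mapsto j_k(x) \in \mathbb{N} \cup \{\infty\}$ is Borel and that $\{x : n \in A_k(x)\}$ is Borel for every $n \in \mathbb{N}$. Continuity of the coordinate projections $x \mapsto x_n$ makes each set $\{x : d(x_n, d_j) \leq 2^{-(k+1)}\}$ closed; hence the event that the set defining $A_{k+1}(x)$ for a given $j$ is infinite, namely
\[
  \bigcap_{N \in \mathbb{N}} \bigcup_{n \geq N} \bigl( \{x : n \in A_k(x)\} \cap \{x : d(x_n, d_j) \leq 2^{-(k+1)}\} \bigr),
\]
is Borel by the induction hypothesis. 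Taking the least such $j$ preserves measurability of $j_{k+1}$, and then $\{x : n \in A_{k+1}(x)\}$ is Borel for every $n$. It follows that $x \mapsto d_{j_k(x)}$ is Borel, the Borel set on which $(d_{j_k(x)})_k$ is Cauchy carries a Borel pointwise limit, and the fallback rule yields a globally Borel $\ell$ with the required property.
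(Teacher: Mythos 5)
Your proof is correct, but it follows a genuinely different route from the paper's. The paper reduces to the compact case: it invokes a known measurable limit-point selector $\tilde{\ell}:K^{\mathbb{N}}\to K$ on the Hilbert cube $K=[0,1]^{\mathbb{N}}$ (Lemma 1.1 of Le Jan--Raimond), embeds $X$ homeomorphically onto a Borel subset of $K$, takes a Borel left inverse $g$ of the embedding $f$, and sets $\ell=g\circ\tilde{\ell}\circ f^{\otimes\mathbb{N}}$; the compactness hypothesis is then used only to ensure that the limit point selected in $K$ is actually attained along a subsequence converging in $X$. You instead give a direct, self-contained construction: a measurable diagonal extraction using a fixed countable dense set, total boundedness of the compact closure, and a pigeonhole argument producing a Cauchy sequence of centers $d_{j_k(x)}$ whose limit is a subsequential limit of $(x_n)$. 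Your inductive measurability argument is sound (the sets $\{x:n\in A_k(x)\}$ and the maps $j_k$ are Borel, the countably-valued maps $x\mapsto d_{j_k(x)}$ are Borel, and the pointwise limit on the Borel set where the construction succeeds is Borel), and the Cauchy estimate $d(d_{j_k(x)},d_{j_{k+1}(x)})\leq 2^{-k}+2^{-(k+1)}$ follows from $A_{k+1}(x)\subseteq A_k(x)$ being nonempty. What your approach buys is independence from the cited selection lemma and from the Borel-embedding machinery for Polish spaces; what the paper's approach buys is brevity, at the cost of relying on two external results. One cosmetic remark: in your recursive definition the set $A_{k+1}(x)$ should be understood as taken with $j=j_{k+1}(x)$ once the minimal $j$ is chosen, which is clear from context but worth stating explicitly.
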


\begin{proof}
  Consider the compact metric space $K=[0,1]^\mathbb{N}$. From
  \cite[Lemma 1.1]{zbMATH02100692} it follows that there exists a
  measurable mapping $\tilde{\ell}:K^\mathbb{N}\to K$ such that for
  any sequence $(x_n: n\in \mathbb{N})$ the point
  $\tilde{\ell}((x_n: n\in \mathbb{N}))$ is a limit point of
  $(x_n: n\in \mathbb{N})$.

  There exists a homemorphism $f$ of $X$ onto a Borel subset of $K$
  \cite[Remark 2.2.8]{zbMATH01166155}. Denote by $g$ a Borel mapping
  of $K$ onto $X,$ such that $g(f(x))=x,$ $x\in X.$ Define
  $\ell:X^\mathbb{N}\to X$ by
  $\ell=g\circ \tilde{\ell}\circ f^{\otimes \mathbb{N}}.$ Consider a
  sequence $(x_n:n\in \mathbb{N})$ in $X$ such that
  $V=\overline{\cup^\infty_{n=1}\{x_n\}}$ is compact. Then $f(V)$ is a
  compact subset of $K,$ and
  $y=\tilde{\ell}\left((f(x_n):n\in \mathbb{N})\right)\in f(V).$
  Extracting subsequences we may assume that
  $y=\lim_{k\to\infty} f\left(x_{n_k}\right)$ and that the limit
  $\lim_{k\to\infty}x_{n_k}=:x_*\in V$ exists. It follows that
  $f(x_*)=y,$ and $\ell((x_n:n\in \mathbb{N}))=g(y)=x_*.$

  This proves Lemma \ref{lem:selection}.
\end{proof}

\noindent \textbf{Acknowledgments:} 
\begin{itemize}
\item This research has been conducted within the Fédération
  Parisienne de Modélisation Mathématique (FP2M)–CNRS FR 2036.

\item   This research has been conducted as part of the project Labex
  MME-DII (ANR11-LBX-0023-01).

\item   The research of Georgii Riabov is part of the departmental project
  III-05-21 ``Evolution of complicated objects in a random media'' and
  is supported by the Simons Foundation grant aid ``Research during
  the war at the biggest Ukrainian mathematical institution''.
\end{itemize}

\printbibliography{}

\end{document}